\documentclass{amsart}

\usepackage[margin=1.5in]{geometry}
\usepackage{amsmath}
\usepackage{amssymb}
\usepackage{amsthm}
\usepackage{enumerate}
\usepackage[colorlinks=true,
citecolor=blue,
linkcolor=red,
urlcolor=black,
anchorcolor=black]{hyperref}
\usepackage{xcolor}
\usepackage[english]{babel}
\usepackage{graphicx}
\usepackage{longtable}
\usepackage{tikz-cd}

\numberwithin{equation}{section}
\newtheorem{theorem}{Theorem}[section]
\newtheorem{prop}[theorem]{Proposition}
\newtheorem{lemma}[theorem]{Lemma}

\newtheorem*{claim}{Claim}

\theoremstyle{definition}

\newcommand{\Z}{\mathbb{Z}}

\newcommand{\Q}{\mathbb{Q}}

\newcommand{\R}{\mathbb{R}}

\newcommand{\calQ}{\mathcal{Q}}

\newcommand{\Qsquare}{\Q^\times / (\Q^\times)^2}
\newcommand{\Qpxsquare}{(\Q_p^\times)^2}

\title[Low-dimensional cusp cross-sections]{Cusp cross-sections of low-dimensional arithmetic hyperbolic manifolds}
\author{Marcus Grimbert}
\address{École Normale supérieure de Lyon}
\email{marcus.grimbert@ens-lyon.fr}
\author{Duncan McCoy}%
\address {Université du Québec à Montréal}
\email{mc\_coy.duncan@uqam.ca}
\author{Connor Sell}%
\address{Université du Québec à Montréal}
\email{sell.connor@gmail.com }

\date{}

\begin{document}

\begin{abstract}
We study which compact flat manifolds of dimensions $3 \leq n \leq 6$ occur as cusp cross-sections in commensurability classes of non-compact arithmetic hyperbolic $(n+1)$-manifolds. Using the classification of low-dimensional flat manifolds together with the relationship between their holonomy representations and rational quadratic forms, we determine the possible arithmetic commensurability classes for every flat manifold in these dimensions. We show that every flat $n$-manifold for $n=3,4,5$ occurs as a cusp cross-section in infinitely many distinct arithmetic commensurability classes. In dimension six, the same holds with exactly eight exceptions: eight non-orientable flat $6$-manifolds occur in a unique arithmetic commensurability class. We also show that, for every $3 \leq n \leq 6$, there is a single commensurability class of arithmetic hyperbolic $(n+1)$-manifolds containing every flat $n$-manifold as a cusp cross-section.
\end{abstract}
\maketitle

\section{Introduction}
The aim of this paper is to explore phenomena around the cusp cross-sections of arithmetic hyperbolic manifolds, using the classification of flat manifolds in low dimensions \cite{Cid2001Computation}. By the work of Long-Reid and McReynolds it is known that every compact flat manifold arises as a cusp cross-section in at least one commensurability class of \emph{arithmetic} hyperbolic manifolds \cite{LongReid, McReynolds2009covers}. In the other direction, there is an algebraic condition which allows one to characterize precisely which commensurability classes of arithmetic hyperbolic manifolds can contain a given flat manifold as a cusp cross-section \cite{McCoySell2024}. Using this criterion, one can show that in all sufficiently large dimensions there are flat manifolds that arise as a cusp cross-section in a unique commensurability class of arithmetic hyperbolic manifolds \cite{McCoySell2025}. We say that a flat $n$-manifold has the \emph{unique arithmetic commensurability class (UCC) property} if it occurs as a cusp cross-section in a unique commensurability class of non-compact arithmetic hyperbolic $(n+1)$-manifolds. For example, since there is a unique commensurability class of non-compact arithmetic hyperbolic surfaces, $S^1$ has the UCC property.

\begin{theorem}\label{thm:UCC_analysis}\quad
\begin{itemize}
    \item For $n=3,4,5$, all flat $n$-manifolds occur as cusp cross-sections in infinitely many commensurability classes of arithmetic hyperbolic $(n+1)$-manifolds.
    \item With eight exceptions, all flat 6-manifolds occur as cusp cross-sections in infinitely many commensurability classes of arithmetic hyperbolic 7-manifolds. The eight exceptions are all non-orientable and have the UCC property.
\end{itemize}
\end{theorem}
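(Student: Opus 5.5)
The plan is to use the criterion of \cite{McCoySell2024} directly. Commensurability classes of non-compact arithmetic hyperbolic $(n+1)$-manifolds correspond to similarity classes of rational quadratic forms $q\oplus\langle 1,-1\rangle$, with $q$ positive definite of rank $n$. A flat manifold $M$ with holonomy representation $\rho\colon G\to \mathrm{GL}_n(\Z)$ is a cusp cross-section in the class of $q$ exactly when $q$ can be taken $\rho(G)$-invariant, i.e.\ $q$ is (up to rational isometry/similarity) a positive definite rational form fixed by $\rho(G)$. So the set of classes containing $M$ is the image, in the set of classes, of the cone of positive definite $\rho(G)$-invariant rational forms. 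The argument therefore reduces to understanding the invariant-form space of $\rho\otimes\Q$ for every flat manifold in dimensions $3\le n\le 6$. These holonomy representations are available from the classification \cite{Cid2001Computation}; only the $\Q$-conjugacy classes of holonomy representations matter, not the full list of manifolds.

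The key step is a decomposition. Write $\rho\otimes\Q\cong\bigoplus_i m_iV_i$, with the $V_i$ irreducible over $\Q$. The invariant forms are then orthogonal sums $\bigoplus_i q_i$, where $q_i$ ranges over invariant forms on the isotypic piece $m_iV_i$. If some isotypic piece has a positive-dimensional family of invariant forms whose discriminants or Hasse invariants vary, infinitely many classes occur. The cheapest criterion is this: if $\rho\otimes\Q$ has at least two isotypic summands, or some $V_i$ is one-dimensional, or some $m_i\ge 2$, then scaling one summand by a prime $p$ changes the Hasse--Witt invariant of $q\oplus\langle1,-1\rangle$ at $p$ (or its discriminant). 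This gives infinitely many similarity classes; one must check that scaling cannot be undone by an overall similarity, which is arranged by choosing infinitely many primes. Via this criterion I would show that in dimensions $3,4,5$ every holonomy representation is reducible in the required sense. The reason is that a faithful $\Q$-irreducible representation of a finite group in dimension $\le5$ arising as holonomy of a Bieberbach group must leave room for a trivial or sign summand (first Betti number considerations, or the fact that a $\Q$-irreducible holonomy forces a fixed-point-free-type obstruction). Any remaining cases would be handled by inspecting the finite table.

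In dimension 6 the exceptions should be the flat manifolds whose holonomy representation is $\Q$-irreducible with $\operatorname{End}_{G}(V)=\Q$ (absolutely irreducible over $\Q$). For these the invariant form is unique up to scalar, so $q$ is determined up to similarity. Since $n+2=8$ is even, similarity of $q\oplus\langle1,-1\rangle$ under scaling by $\lambda$ still might give a different class. I would check this against the invariants: $\lambda q\oplus\langle1,-1\rangle\cong\lambda(q\oplus\langle 1,-1\rangle)$ because $\langle1,-1\rangle$ is hyperbolic and absorbs scaling, so exactly one class arises and these manifolds have the UCC property. Conversely, a $\Q$-irreducible $V$ with endomorphism field $K\ne\Q$ yields invariant forms $\operatorname{tr}_{K/\Q}(a\,h)$ for $a\in K$ totally positive. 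Varying $a$ changes the Hasse invariants infinitely often (norm/discriminant arguments), giving infinitely many classes. The remaining work is a search of the 6-dimensional table. First list all $\Q$-classes of holonomy representations of the 6-dimensional Bieberbach groups, identify those that are absolutely irreducible, and count the corresponding manifolds, obtaining eight. Then check orientability by computing $\det\rho(g)$ for generators, finding some $g$ with determinant $-1$ in each case.

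I expect the main obstacle to be the case analysis for the representations that are irreducible but not absolutely irreducible, together with the cases of a multiplicity-one sum of two irreducibles whose scalings might be constrained. In both, one must produce genuinely infinitely many distinct Hasse invariants rather than just a family of forms. My first attempt would be to fix a base invariant form $q_0$ and rescale a summand by primes $p\equiv1$ modulo a suitable modulus chosen so that the local invariant at $p$ flips. The finite verification over the table of \cite{Cid2001Computation} I would carry out by computer.
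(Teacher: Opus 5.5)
Your plan has a genuine gap. Both your test for ``infinitely many classes'' and your identification of the exceptions are wrong, and carried out literally the plan would conclude that there are no exceptions at all.

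\textbf{The exceptions.} No flat manifold of dimension $3\le n\le 6$ has a $\Q$-irreducible holonomy representation: every list $\mathcal{C}$ in the tables has at least two entries. So your candidate set of exceptions is empty. The eight true exceptions have $\mathcal{C}=(2\text{-}1,2\text{-}1,2\text{-}1)$ or $(4\text{-}1,2\text{-}1)$. These are reducible with several summands, which is exactly where your criterion predicts infinitely many classes.

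\textbf{Why the criterion fails.} Changing some Hasse--Witt invariant, or the discriminant, is not enough. By Proposition~\ref{prop:projective_invariants}, the change must occur in an invariant that the projective class actually sees.
In the two exceptional cases every summand has square discriminant, so $d(q)=-1$ is forced. In rank $8$ the class then depends only on $\varepsilon_p(q)$ for $p\equiv 1\bmod 4$.
Rescaling a rank-$2$ summand $g$ with $d(g)=1$ by $\lambda$ multiplies $\varepsilon_p$ by $(\lambda,-1)_p$, which is trivial at exactly those primes. Rescaling a rank-$4$ summand with square discriminant changes nothing.
Meanwhile invariance forces $\varepsilon_p(q)=1$ at every $p\equiv 1\bmod 4$, so there is a single class.

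\textbf{The same defect in dimension $5$.} Take $\mathcal{C}=(4\text{-}1',1)$ and $q=f\oplus\langle a\rangle\oplus\langle 1,-1\rangle$. One gets $(d(q),-1)_p\varepsilon_p(q)=(-1,-1)_p\varepsilon_p(f)$. This is independent of $a$, and unchanged by rescaling $f$ since $d(f)=1$, so scaling summands gives nothing. The infinitely many classes there come solely from non-homothetic invariant forms on the $\R$-reducible piece. You must actually show that $\varepsilon_p(f)$ can be prescribed at primes $p\equiv 7\bmod 8$. That is the real content of the appendix: explicit invariant matrices, and representing $q_1\cdots q_\ell$ by $x^2-2y^2$, with analogues for 4-2' and 4-3'. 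Your unsupported ``norm/discriminant argument'' does not replace it.

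\textbf{The endomorphism-field dichotomy.} It is also wrong. The $C_4$ representation in family 2-1 has $\operatorname{End}=\Q(i)$, and the $Q_8$ representation in 4-1 has a quaternion algebra as endomorphism ring. Yet each preserves a unique form up to scalar, because the parameter $a$ in $\operatorname{tr}(a\,h)$ must lie in the subalgebra fixed by the involution, which is $\Q$ here. The relevant distinction is $\R$-irreducible versus $\R$-reducible.

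\textbf{What is needed.} Follow the paper's route:
\begin{itemize}
\item classify the invariant forms of each crystal family up to projective equivalence (Lemmas~\ref{lem:dim2analysis}--\ref{lem:dim5analysis});
\item for each list $\mathcal{C}$, compute exactly which projective invariants $f_1\oplus\cdots\oplus f_k\oplus\langle 1,-1\rangle$ can realize;
\item decide, for each resulting condition, whether it cuts out one projective class or infinitely many.
\end{itemize}
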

The existence of a non-orientable flat 6-manifold with the UCC property was established in \cite{McCoySell2025}. Theorem~\ref{thm:UCC_analysis} proves, however, that there does not exist an orientable flat 6-manifold with the UCC property.

Another known phenomenon is that in all sufficiently large dimensions there are pairs of flat manifolds $B_1$ and $B_2$ that can never appear as cusp cross-sections in the same commensurability class of arithmetic hyperbolic manifolds \cite{McCoySell2025}. It turns out that this phenomenon does not arise in low dimensions. In fact, in all dimensions $n\leq 6$ there is a commensurability class of arithmetic hyperbolic $(n+1)$-manifolds containing every flat $n$-manifold as a cusp cross-section. 
\begin{theorem}\label{thm:universal_class}
    Let $B$ be a compact flat manifold of dimension $3\leq n\leq 6$. Then $B$ occurs as a cusp cross-section in the commensurability class of arithmetic hyperbolic manifolds defined by the quadratic form
    \[
    q_0(x)=x_1^2 +\dots + x_{n+1}^2-x_{n+2}^2.
    \]
\end{theorem}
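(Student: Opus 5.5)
The plan is to apply the algebraic criterion of \cite{McCoySell2024}. Let $B$ be a compact flat $n$-manifold with holonomy group $G$ and holonomy representation $\rho\colon G\to GL_n(\Z)$. Then $B$ occurs as a cusp cross-section in the commensurability class defined by an admissible form $q$ exactly when two conditions hold. First, there must be a positive definite rational quadratic form $f$ on $\Q^n$ that is invariant under $\rho(G)$. Second, $f\oplus\langle 1,-1\rangle$ must be rationally equivalent, up to scaling, to $q$. Since the hyperbolic plane $\langle 1,-1\rangle$ is already a summand, the task reduces to one statement: for each flat $n$-manifold with $3\le n\le 6$, find a $G$-invariant positive definite rational form $f$ in $n$ variables such that $f\oplus\langle 1,-1\rangle\cong \lambda q_0$ over $\Q$ for some $\lambda\in\Q^\times$. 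By Witt cancellation, $q_0=I_{n+1}\oplus\langle -1\rangle\cong I_n\oplus\langle1,-1\rangle$. So it suffices to show that $f\cong \lambda I_n$ for some rational $\lambda>0$, that is, that $f$ is rationally similar to the sum of $n$ squares.

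Step one is to decompose the holonomy representation into $\Q$-irreducible constituents, $\rho\otimes\Q=\bigoplus_i V_i^{m_i}$. Any $G$-invariant form is then an orthogonal sum of invariant forms on the isotypic pieces. Step two is to treat an isotypic piece $V^m$: there we may take $f$ to be $m$ copies of a single invariant form $h$ on $V$. Because all the dimensions are small ($\dim V\le 6$), I would check case by case from the list of $\Q$-irreducible representations of the finite groups that occur as holonomy groups in these dimensions, using the tables of \cite{Cid2001Computation}. The aim is to show that each such $V$ carries an invariant form similar to a sum of squares, or at least that the total form can be arranged to be. For one-dimensional characters, the form $\langle 1\rangle$ works. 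For the standard permutation-type pieces, the restriction of $I$ from a permutation lattice works. For two-dimensional pieces with $C_3,C_4,C_6$ action, the invariant forms are scalings of $x^2+xy+y^2\cong\langle1,3\rangle$ or of $\langle1,1\rangle$. In the hexagonal cases, where the best one can do is $\langle a,3a\rangle$, I would absorb the factor using several summands. For instance, $\langle1,3\rangle\oplus\langle1,3\rangle\cong\langle 1,1,1,1\rangle$, since $\langle 3,3\rangle\cong\langle1,1\rangle$ because $3\cdot 3$ and $2$ are norms.

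Step three combines the pieces. The freedom is to rescale each isotypic block independently by positive rationals $c_i$. The determinant and Hasse invariants of $\bigoplus c_i f_i$ are then adjusted until they match those of $\lambda I_n$. This is a local-global problem. Over $\R$ both forms are positive definite, so only the discriminant class in $\Qsquare$ and the Hasse invariants at finitely many primes need to agree. The scaling freedom gives enough room whenever there are at least two blocks, or one block that is already a sum of squares.

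I expect the main obstacle to be the rigid cases where the invariant form is unique up to scaling and is not similar to $I_n$. The typical example is an absolutely irreducible constituent of dimension $n$ or $n-1$ whose forms all have discriminant $3$, such as those coming from $A_2$, $E_6$, or $A_n$-type root lattices. These are exactly the UCC-type examples of Theorem~\ref{thm:UCC_analysis}, so I would first determine their unique commensurability class. The hope is that it turns out to be $q_0$. For example, the $A_6$ form has discriminant $7$, and $\langle 7\rangle\oplus\langle1,-1\rangle$ must be compared against $q_0$ through Hasse invariants. I would verify this directly for each of the eight exceptional $6$-manifolds and for the rigid lower-dimensional cases by computing the local invariants at $p=2,3,7$. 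Finally, I would note that if the needed $\lambda$ has odd exponents, the scaling in $\lambda q_0$ handles it, because $n+2$ is the relevant dimension parity.
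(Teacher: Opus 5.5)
Your reduction is correct. By Witt cancellation it is in fact an equivalence: $B$ lies in the class of $q_0$ if and only if $I_n$ is a holonomy form for $B$. Exhibiting such a form directly is a legitimate alternative to the paper's route, which classifies all classes for each decomposition $\mathcal C$ and then checks $q_0$ against them.

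However, the hexagonal step rests on a false identity that carries real weight. The claim $\langle 3,3\rangle\cong\langle 1,1\rangle$ fails, because $3$ is not a sum of two rational squares; equivalently $\varepsilon_3(\langle 3,3\rangle)=(3,-1)_3=-1$. Hence $\langle 1,3\rangle\oplus\langle 1,3\rangle=\langle 1,1,3,3\rangle$ has $d=1$ and $\varepsilon_p=(3,-1)_p$, which is $-1$ at $p=2,3$. This is exactly the 4-2 class of Lemma~\ref{lem:dim4analysis}, which is not projectively equivalent to $I_4$.

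With your recipe (identical copies $h^{\oplus m}$ on an isotypic piece, one scalar per piece) this breaks an actual case. The flat $5$-manifold with holonomy $C_3$ and $\mathcal C=(2\text{-}2,2\text{-}2,1)$ only receives forms $\langle c,3c,c,3c,a\rangle$, all with $\varepsilon_3=-1$. In rank $5$ the projective class is determined by the $\varepsilon_p$ alone (Proposition~\ref{prop:projective_invariants}(ii)), so none of these is similar to $I_5$. The repair is to scale copies inside an isotypic piece independently: $\langle 1,3\rangle\oplus\langle 2,6\rangle\cong I_4$, since $(2,-3)_p=(3,-1)_p$ for every $p$. Alternatively, let one-dimensional summands absorb the defect, as in $\langle 1,1,3,3\rangle\oplus\langle 3,3\rangle\cong I_6$, using $\langle 3,3,3,3\rangle\cong I_4$.

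More fundamentally, Step 3's claim that rescaling ``gives enough room whenever there are at least two blocks'' is asserted, not proved, and is false in general. In even rank, scaling does not change the discriminant. So a 4-3$'$ block (discriminant forced to be $5$) beside a 2-1 block (discriminant $1$, already a sum of squares) could never give $I_6$. Whether $q_0$ works depends on which decompositions $\mathcal C$ actually occur. It also depends on the full set of invariant-form classes for each family, including the $\R$-reducible families 4-1$'$, 4-2$'$, 4-3$'$, which you do not address. For 4-2$'$, $C_{12}$ has no signed-permutation model in rank $4$. You need something like the $D_4$ form ($a=b=1$ in Appendix~\ref{sec:R_red}), which has $\varepsilon_p=(2,-1)_p=1$. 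This is what the paper supplies through Lemmas~\ref{lem:dim2analysis}--\ref{lem:dim5analysis} and Theorems~\ref{thm:dim3}--\ref{thm:dim6}; after that, checking $q_0$ against each type is immediate.

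Your ``rigid case'' paragraph has the logic backwards. Suppose a constituent forced a form not similar to $I_n$ with nothing to compensate, as in your $A_6$ example (discriminant $7$ in rank $6$). Then by the same Witt cancellation, $B$ would \emph{not} lie in the class of $q_0$, so there is nothing to hope for. Such constituents must be excluded by the classification, which shows that only the ten listed families occur. Nor are the eight UCC manifolds obstacles: their decompositions $(2\text{-}1,2\text{-}1,2\text{-}1)$ and $(4\text{-}1,2\text{-}1)$ admit $I_6$ directly, so no computation at $p=2,3,7$ is needed.
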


Finally, we gather some statistics on the proportion of flat manifolds that appear as cusp cross-sections in every commensurability class of arithmetic hyperbolic manifolds. Previous results had shown that of the six orientable flat 3-manifolds, three of them (the 3-torus, the half-twist manifold and the Hantzsche-Wendt manifold) occur as cusp cross-sections in every commensurability class of arithmetic hyperbolic 4-manifolds \cite{Sell}. Similarly it was known that 12 of 27 compact orientable flat 4-manifolds occur as cusp cross-sections in every commensurability class of arithmetic hyperbolic 5-manifolds \cite{McCoySell2024}. The data from the flat 5- and 6-manifolds motivates the question of whether the proportion of flat $n$-manifolds (both orientable and non-orientable) that appear as cusp cross-sections in every commensurability class of arithmetic hyperbolic $(n+1)$-manifolds tends to one.
\begin{table}[]
    \centering
    \begin{tabular}{|c|c|c|} \hline
		Dimension & Orientable  & All \\ \hline
            $3$ & $3/6=50\%$ & $7/10=70\%$  \\ \hline
            $4$ & $12/27=44.4\%$ & $44/74=59.5\%$ \\ \hline
            $ 5$ & $153/174=87.9\%$ & $1023/1060=96.5\%$  \\ \hline
            $ 6 $ & $2868/3314=86.5\%$ & $37216/38746=96.1\%$ \\ \hline
	\end{tabular}
    \caption{Statistics on the flat manifolds appearing as cusp cross-sections in all commensurability classes of non-compact arithmetic hyperbolic manifolds.}
    \label{tab:UCC_constructions}
\end{table}

\subsection*{Acknowledgements}
DM is partially supported by NSERC grant RGPIN-2020-05491 and a Canada Research Chair.

\section{Preliminaries}

\subsection{Rational quadratic forms}
Let $f: \Q^n \rightarrow \Q$ be a non-degenerate rational quadratic form. 
The orthogonal group of $f$ is the matrix group
\[ 
O(f;\Q)=\{A\in GL_n(\Q) \mid f(Av)=f(v)\, \forall v\in \Q^n\}.
\]
Two rational quadratic forms $f$ and $g$ are (rationally) equivalent if there exists a matrix $C\in GL_n(\Q)$ such that
\[
\text{$f(v)=g(Cv)$ for all $v\in \Q^n$.}
\]
We recall the rational equivalence class of a non-degenerate rational quadratic form $f$ is completely determined by the combination of its signature, discriminant $d(f)\in \Qsquare$ and Hasse-Witt invariants $\varepsilon_p(f)\in \{\pm1\}$ at each prime $p$ \cite{Serre_arithmetic}.
Two rational quadratic forms $ f $ and $ g $ are \emph{projectively equivalent} if there exists a positive $ m \in \mathbb{Q}_{>0} $ such that $ mf $ and $ g $ are rationally equivalent. The projective equivalence class of a rational quadratic form can also be characterized in terms of the discriminant and Hasse-Witt invariants with the precise characterization depending on the rank modulo four.
\begin{prop}[{\cite[Proposition~5.4]{McCoySell2024}}]
\label{prop:projective_invariants}
    Let $f$ and $g$ be two non-degenerate quadratic forms of rank $n$ over $\Q$. Then $f$ and $g$ are projectively equivalent if and only if they have the same signature and one of the following conditions holds:
    \begin{enumerate}[(i)]
        \item $n\equiv 0 \bmod 4$, $d(f)=d(g)=d$ and $\varepsilon_p(f)=\varepsilon_p(g)$ for all primes $p$ such that $d\in \Qpxsquare$;
        \item $n\equiv 1 \bmod 4$ and $\varepsilon_p(f)=\varepsilon_p(g)$ for all primes $p$;
        \item $n\equiv 2 \bmod 4$, $d(f)=d(g)=d$ and $\varepsilon_p(f)=\varepsilon_p(g)$ for all primes $p$ such that $-d\in \Qpxsquare$ or
        \item $n\equiv 3 \bmod 4$ and $(d(f),-1)_p\varepsilon_p(f)=(d(g),-1)_p\varepsilon_p(g)$ for all primes $p$.
    \end{enumerate}
   \qed
\end{prop}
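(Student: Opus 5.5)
The plan is to compute how the discriminant and Hasse--Witt invariants change under scaling by $m\in\Q_{>0}$, and then to ask which data survive as $m$ varies. By Hasse--Minkowski, $f$ and $g$ are projectively equivalent if and only if some $m>0$ makes $mf$ and $g$ agree in signature, discriminant and every $\varepsilon_p$. Since $m>0$, the signature is unchanged by scaling, so equal signatures is necessary and we may assume it throughout. Diagonalize $f=\langle a_1,\dots,a_n\rangle$ and use the convention $\varepsilon_p(f)=\prod_{i<j}(a_i,a_j)_p$. Then $d(mf)=m^n d(f)$, and bilinearity of the Hilbert symbol gives
\[
\varepsilon_p(mf)=\prod_{i<j}(ma_i,ma_j)_p=\varepsilon_p(f)\,(m,m)_p^{\binom n2}\,(m,d)_p^{\,n-1},
\]
where $d=d(f)$. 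Here each $a_i$ appears in $n-1$ pairs, and $(m,m)_p=(m,-1)_p$.

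Next I reduce the exponents mod $2$ (and $n$ mod $4$).

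\emph{$n\equiv 0 \bmod 4$:} $d$ is unchanged, and $\varepsilon_p$ is multiplied by $(m,d)_p$.

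\emph{$n\equiv 1 \bmod 4$:} $d$ is multiplied by $m$, and $\varepsilon_p$ is unchanged.

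\emph{$n\equiv 2 \bmod 4$:} $d$ is unchanged, and $\varepsilon_p$ is multiplied by $(m,-1)_p(m,d)_p=(m,-d)_p$.

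\emph{$n\equiv 3 \bmod 4$:} $d$ is multiplied by $m$, and $\varepsilon_p$ is multiplied by $(m,-1)_p$. Then
\[
(md,-1)_p\,\varepsilon_p(mf)=(m,-1)_p^2\,(d,-1)_p\,\varepsilon_p(f)=(d,-1)_p\,\varepsilon_p(f),
\]
so this combination is invariant.

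These computations give necessity directly. In cases (ii) and (iv) the listed quantity is invariant under scaling. In cases (i) and (iii), $d$ is invariant, and $(m,\pm d)_p=1$ whenever $\pm d$ is a square in $\Q_p$, so $\varepsilon_p$ is invariant at exactly the primes listed.

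For sufficiency, odd $n$ is easy. In case (ii), take $m=d(g)/d(f)$, chosen positive: equal signatures force $d(f),d(g)$ to have the same sign. Then $d(mf)=d(g)$ and the $\varepsilon_p$ already agree, so Hasse--Minkowski applies. In case (iv), the same choice of $m$ makes the discriminants agree, and invariance of $(d,-1)_p\varepsilon_p$ then forces $\varepsilon_p(mf)=\varepsilon_p(g)$. The infinite place needs no separate check, since signature determines $\varepsilon_\infty$.

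The main obstacle is sufficiency for even $n$. Here one must find $m>0$ with $(m,c)_p=\eta_p$ for all $p$, where $c=d$ or $-d$ and $\eta_p=\varepsilon_p(f)\varepsilon_p(g)$. By hypothesis $\eta_p=1$ whenever $c\in\Qpxsquare$. The tool is the existence theorem for Hilbert symbols (Serre, Ch.~III, Thm.~4), which prescribes $(m,c)_v=\eta_v$ over all places $v$, including $\infty$. Its conditions are:
\begin{enumerate}[(a)]
\item only finitely many $\eta_v$ equal $-1$;
\item $\prod_v\eta_v=1$;
\item at each $v$ some $m_v$ realizes $\eta_v$.
\end{enumerate}
Condition (a) holds because $\varepsilon_p=1$ for almost all $p$. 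Condition (c) holds by nondegeneracy of the Hilbert symbol on $\Q_p^\times/(\Q_p^\times)^2$ when $c$ is not a square, and trivially when $c$ is a square since then $\eta_p=1$. Set $\eta_\infty=\varepsilon_\infty(f)\varepsilon_\infty(g)=1$, which holds by equal signatures. Then (b) follows from the product formula $\prod_v\varepsilon_v(f)=\prod_v\varepsilon_v(g)=1$. Finally, I need $m>0$, i.e.\ $(m,c)_\infty=1$ with $m$ positive. If $c>0$ any solution works, and if $c<0$ then $\eta_\infty=1$ forces $m>0$. With this $m$, $mf$ and $g$ share all invariants, so they are equivalent.
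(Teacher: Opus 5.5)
The paper does not prove this proposition; it quotes it from \cite{McCoySell2024} without proof, so there is no in-text argument to compare against. Your approach is the natural one: compute how $d$ and $\varepsilon_p$ change under scaling, then use the Hilbert-symbol existence theorem for the even-rank converse. The scaling formula $\varepsilon_p(mf)=\varepsilon_p(f)(m,-1)_p^{\binom n2}(m,d)_p^{n-1}$ and its reduction mod $4$ are correct, as are both directions for odd $n$ and the verification of conditions (a)--(c) of the existence theorem.

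The one step that fails as written is the final positivity claim, ``If $c>0$ any solution works.'' When $c>0$, the condition $(m,c)_\infty=1$ puts no constraint on the sign of $m$, so the existence theorem may hand you some $m<0$. For such $m$ you do get $d(mf)=d(g)$ and $\varepsilon_v(mf)=\varepsilon_v(g)$ at every place. However, $mf$ then has the reversed signature, so $mf\not\cong g$ unless the signature is balanced, and in any case $m<0$ does not witness projective equivalence. This case really arises: for $n\equiv 2 \bmod 4$ and signature $(n-1,1)$ you have $d<0$, hence $c=-d>0$. That is exactly the signature $(5,1)$ situation the paper uses for flat $4$-manifolds.

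The repair is one line. If the existence theorem returns $m<0$, replace it by $-cm>0$. Since $(-c,c)_v=1$ at every place $v$ (as $(a,-a)_v=1$ always), you get $(-cm,c)_v=(m,c)_v=\eta_v$ for all $v$, and nothing else in the argument changes. Your treatment of the sub-case $c<0$ is already fine, because there $\eta_\infty=1$ forces $m>0$. With this fix the proof is complete.
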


\subsection{Holonomy forms}
Let $B$ be a compact flat $n$-manifold with fundamental group $\Gamma$. Then $\Gamma$ is a torsion-free group which we may identify as a discrete subgroup of $O(n)\ltimes \R^n$. The translation subgroup $T\leq \Gamma$ of elements acting by translations on $\R^n$ is a normal abelian subgroup. Moreover, the first Bieberbach theorem implies that $T$ is a maximal abelian subgroup of $\Gamma$ of finite index and that it is isomorphic to $\Z^n$. Thus identifying $T\cong \Z^n$ yields a short exact sequence
\begin{equation}
    1\rightarrow \Z^n \rightarrow \Gamma \rightarrow H \rightarrow 1,
\end{equation}
where the finite group $H$ is the \emph{holonomy group} of $B$. Letting $H$ act on $\Z^n$ by conjugation yields a representation
\[
\rho_{\mathrm{hol}} : H \rightarrow GL_n(\Z)
\]
which is well-defined up to conjugation by elements of $GL_n(\Z)$.
Since the translation subgroup is a maximal abelian subgroup of $\Gamma$, this representation is faithful. We define a (rational) \emph{holonomy representation} for $B$ to be any rational representation
\[
\rho : H \rightarrow GL_n(\Q)
\]
in the conjugacy class of representations defined by $\rho_{\mathrm{hol}}$.

A \emph{holonomy form} for $B$ is a positive definite rational quadratic form $f$ of rank $n$ such that $B$ has a holonomy representation 
\[
\rho:H \rightarrow O(f;\Q).
\]
It is not hard to check that the set of all possible holonomy forms for a flat manifold $B$ is closed under projective equivalence.

In order to calculate the holonomy forms of a given flat manifold it will often be convenient to decompose the holonomy representation into subrepresentations.
\begin{prop}[{\cite[Proposition 6.1]{McCoySell2024}}]\label{prop:form_decomposition}
    Let $\rho=\sigma_1 \oplus \dots \oplus \sigma_\ell$ be a direct sum of rational representations of a finite group $G$. If the image of $\rho$ is contained in $O(f;\Q)$, for a non-degenerate rational quadratic form $f$, then $f$ is equivalent to
    \[
    f\cong g_1 \oplus \dots \oplus g_\ell,
    \]
    where each $g_i$ is a quadratic form such that the image of $\sigma_i$ is contained in $O(g_i;\Q)$.
    \qed
\end{prop}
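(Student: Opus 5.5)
We argue by induction on $n=\dim V$, where $V=\Q^n$ carries the $G$-action $\rho$ and the non-degenerate $G$-invariant symmetric bilinear form $B$ associated to $f$. We first reduce to irreducible summands. By Maschke's theorem each $\sigma_i$ is a direct sum of irreducible rational representations, so $\rho\cong\tau_1\oplus\dots\oplus\tau_m$ with each $\tau_j$ irreducible. The aim is an orthogonal decomposition $V=W_1\perp\dots\perp W_m$ into $G$-invariant subspaces with $W_j\cong\tau_j$.

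Such a decomposition suffices. In a basis adapted to the $W_j$ we get $f\cong h_1\oplus\dots\oplus h_m$, where $h_j$ is preserved by the action on $W_j$. Grouping the $W_j$ that make up the irreducible constituents of $\sigma_i$ gives a form $g_i$ on a representation isomorphic to $\sigma_i$. Invariance transports along isomorphisms: if $\sigma=P\tau P^{-1}$ and $\tau$ preserves $g$, then $\sigma$ preserves $g\circ P^{-1}$. Hence, after a change of basis, the image of $\sigma_i$ lies in $O(g_i;\Q)$ and $f\cong g_1\oplus\dots\oplus g_\ell$.

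The key lemma is that $V$ contains an irreducible $G$-invariant subspace $W$ with $B|_W$ non-degenerate. Granting this, $W^\perp$ is $G$-invariant and $V=W\perp W^\perp$. By Krull--Schmidt for semisimple modules, $W^\perp$ is isomorphic to the sum of the remaining irreducibles, so induction on $W^\perp$ finishes the argument.

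To prove the lemma, take any irreducible invariant $W$. Its radical $W\cap W^\perp$ is invariant, so it is either $0$ (and we are done) or all of $W$, in which case $W$ is totally isotropic. Assume the latter.
\begin{enumerate}[(i)]
\item The map $V\to W^*$ given by $v\mapsto B(v,\cdot)|_W$ is a surjective $G$-map, since $B$ is non-degenerate. By semisimplicity we may choose an invariant subspace $U$ mapping isomorphically onto $W^*$. Then $W\cap U=0$, because $W$ lies in the kernel.
\item Using $U\cong W^*$, the $G$-maps $\psi:W\to U$ correspond bijectively to $G$-invariant bilinear forms on $W$ via $\psi\mapsto B(w,\psi w')$. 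Averaging a positive definite form over $G$ gives a positive definite invariant form $P$ on $W$. We choose $\psi$ with $B(w,\psi w')=P(w,w')$; this $\psi$ is injective, hence an isomorphism.
\item For $t\in\Q^\times$, the graph $W_t=\{w+t\psi(w)\}$ is an invariant subspace isomorphic to $W$, hence irreducible. Since $W$ is isotropic, the restriction of $B$ to $W_t$ is
\[
2tP(w,w')+t^2B(\psi w,\psi w').
\]
Its Gram determinant is $t^{\dim W}\det(2P+tQ)$, where $Q$ is the matrix of $B(\psi\,\cdot,\psi\,\cdot)$. This is a nonzero polynomial in $t$, because its value at $t=0$ after removing the factor $t^{\dim W}$ is $\det(2P)\neq0$. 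Hence some rational $t\neq0$ makes $B|_{W_t}$ non-degenerate, which proves the lemma.
\end{enumerate}

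The main obstacle is exactly this isotropic case, since $f$ need not be definite. The argument overcomes it by realizing the averaged positive definite form $P$ through the pairing between $W$ and $U$.
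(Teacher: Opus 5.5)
The paper does not prove this proposition; it quotes it from \cite[Proposition~6.1]{McCoySell2024} without argument, so there is no in-paper proof to compare against. Your proof is correct and complete.

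The transport step ($\sigma_i=P\tau P^{-1}$ preserves $g\circ P^{-1}$) is exactly what is needed when different $\sigma_i$ share irreducible constituents. In that case the given block decomposition need not be $B$-orthogonal. The induction via $V=W\perp W^\perp$ works once you add one line: $B|_{W^\perp}$ is again non-degenerate, because a vector in its radical pairs trivially with both $W^\perp$ and $W$, hence with $V$.

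The isotropic case also checks out:
\begin{itemize}
\item $\Phi$ is equivariant with kernel $W^\perp\supseteq W$, so $U\cap W=0$.
\item Composing with $\Phi|_U$ identifies $\mathrm{Hom}_G(W,U)$ with the invariant bilinear forms on $W$. This lets you realize the averaged form $P$, which makes the first-order term of $B|_{W_t}$ equal to $2tP$.
\item $\det(2P+tQ)$ is a polynomial in $t$ with nonzero constant term, so a suitable rational $t\neq 0$ exists.
\end{itemize}

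Two comments on the route. First, your key lemma is the module-theoretic counterpart of the fact that a nonzero hermitian form over the division algebra $\mathrm{End}_G(W)$ has an anisotropic vector. That fact is what a proof through isotypic components and hermitian forms would use; your graph construction reaches the same conclusion without setting up that dictionary.

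Second, in every application in this paper $f$ is a positive definite holonomy form. Then every invariant subspace is non-degenerate, the isotropic case never arises, and the proof collapses to repeatedly taking orthogonal complements. Your extra work is needed only for the indefinite generality in which the proposition is stated.
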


\subsection{Arithmetic hyperbolic manifolds}
For any rational quadratic form $q$ of signature $(n+1,1)$, there is an associated commensurability class of arithmetic hyperbolic $(n+1)$-manifolds obtained by considering subgroups of $O(q;\R)$ commensurable to $O(q;\Z)$. Moreover two rational quadratic forms $q_1$ and $q_2$ of signature $(n+1,1)$ define the same commensurability class of arithmetic hyperbolic manifolds if and only if $q_1$ and $q_2$ are projectively equivalent \cite[2.6]{Gromov}.
The main result of \cite{McCoySell2024} determines precisely when a given flat manifold $B$ arises as a cusp cross-section in a commensurability class of arithmetic hyperbolic manifolds.
\begin{theorem}[{\cite[Theorem~1.1]{McCoySell2024}}]\label{thm:realization}
    Let $B$ be a compact flat $n$-manifold and let $q$ be a rational quadratic form of signature $(n+1,1)$. Then $B$ arises as a cusp cross-section in the commensurability class of arithmetic hyperbolic manifolds defined by $q$ if and only if $q$ is projectively equivalent to $f\oplus \langle 1,-1\rangle$ where $f$ is a holonomy form for $B$.\qed
\end{theorem}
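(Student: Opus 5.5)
The plan is to pass through the stabilizer of a rational isotropic vector, which is the algebraic avatar of a cusp.

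\textbf{Necessity.} Suppose $B$ is a cusp cross-section of a manifold $M=\mathbb{H}^{n+1}/\Lambda$ with $\Lambda$ commensurable to $O(q;\Z)$. Then $\Lambda$ has a parabolic fixed point, so $q$ has a rational isotropic vector $e$. Choose a rational isotropic $e'$ with $\langle e,e'\rangle=1$. This gives a rational splitting
\[
q\cong W\oplus\langle 1,-1\rangle, \qquad W=\{e,e'\}^{\perp},
\]
where $W$ is positive definite of rank $n$.

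The stabilizer of the line $\Q e$ in $O(q;\Q)$ is a parabolic subgroup. Its unipotent radical $\{x\mapsto x+\dots\}$ is identified with $W(\Q)$ acting by translations on the horosphere, which is isometric to $W\otimes\R$. Its Levi part is $O(W;\Q)\times\Q^\times$. The cusp group $\Gamma=\pi_1(B)=\mathrm{Stab}_\Lambda(e)$ consists of elements acting as Euclidean isometries of the horosphere, so they have no dilation part, up to sign. Hence $\Gamma\le O(W;\Q)\ltimes W(\Q)$.

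The translation subgroup of $\Gamma$ is then a lattice in $W(\Q)$. Conjugation by a rational basis of this lattice realizes $\rho_{\mathrm{hol}}$ as a representation into $O(W;\Q)$, so $W$ is a holonomy form. Finally, projective equivalence is harmless, since commensurability classes only see $q$ up to projective equivalence.

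\textbf{Sufficiency.} Conversely, let $q$ be projectively equivalent to $f\oplus\langle1,-1\rangle$ with $f$ a holonomy form, and rescale so that $q=f\oplus\langle1,-1\rangle$.
\begin{enumerate}[(1)]
\item Using $\rho:H\to O(f;\Q)$, embed $\Gamma$ into the affine group $O(f;\Q)\ltimes\Q^n$. The translation part is taken to be $\Z^n$ and the cocycle has rational values, since the cohomology class is torsion.
\item Include $O(f;\Q)\ltimes\Q^n$ into $O(q;\Q)$ as the stabilizer of $e$ acting trivially on $e$. The image of $\Gamma$ is a finitely generated subgroup with rational entries. After conjugating by a suitable diagonal scaling, it lies in $O(q;\Z[1/N])$ and in fact, by clearing denominators of the lattice, in a group commensurable with $O(q;\Z)$. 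Concretely, enlarge to the arithmetic group $\Lambda_0$ generated by $O(L;\Z)$ for a $\Gamma$-invariant lattice $L$; such a lattice exists because $\Gamma$ has bounded denominators.
\item The stabilizer of $e$ in $\Lambda_0$ is a Bieberbach-type group $P$ containing $\Gamma$ with finite index, and possibly containing torsion and extra translations.
\end{enumerate}

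\textbf{Main obstacle.} The hard step is producing a torsion-free finite-index $\Lambda\le\Lambda_0$ with $\mathrm{Stab}_\Lambda(e)=\Gamma$ exactly. I would invoke separability of the peripheral subgroup $\Gamma$ in $\Lambda_0$, as in McReynolds' argument via congruence subgroups and the subgroup separability of geometrically finite or abelian-by-finite parabolic subgroups of arithmetic groups.

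First choose a torsion-free congruence subgroup $\Lambda_1$; its intersection with $P$ is a lattice of translations contained in $T=\Z^n$. Then use separability to find a finite-index subgroup $\Lambda\supseteq\Gamma$ of $\langle\Gamma,\Lambda_1\rangle$ avoiding the finitely many coset representatives of $P\setminus\Gamma$, and avoiding torsion. This yields $\mathrm{Stab}_\Lambda(e)=\Gamma$, so $B$ is a cusp cross-section of $\mathbb{H}^{n+1}/\Lambda$.

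Controlling torsion while simultaneously keeping all of $\Gamma$ is the delicate point. I would handle it by working with $\Lambda$ obtained as $\Gamma\cdot\Lambda(m)$ for a deep principal congruence level $m$ chosen so that $\Gamma\cap\Lambda(m)=T\cap\Lambda(m)$. I expect this to require the most care.
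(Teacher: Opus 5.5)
\textbf{Overall.} Your necessity argument and the set-up of sufficiency are the standard route. That set-up consists of the rational cocycle, the embedding of $\Gamma$ in the stabilizer of $e$, a $\Gamma$-invariant lattice $L$ with $\Lambda_0=\{g\in O(q;\Q): gL=L\}$, and the crystallographic group $P=\mathrm{Stab}_{\Lambda_0}(e)\supseteq\Gamma$. The paper gives no proof here; it quotes \cite{McCoySell2024}, which builds on \cite{LongReid, McReynolds2009covers}. One small point: the lattice $L$ exists because $\Gamma$ is a finite union of cosets of the unipotent group $\{u_t\}_{t\in T}$, whose entries are polynomials of degree at most two in $t$. ``Bounded denominators'' needs this justification.

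\textbf{The gap.} The step you flag but do not carry out, making the final group torsion-free, is not achieved by either mechanism you offer. Separability of $\Gamma$ lets you exclude finitely many prescribed elements. But a finite-index $\Lambda\supseteq\Gamma$ is torsion-free only if it misses every conjugate of every torsion element of the ambient group, which is not a finite condition. Your criterion $\Gamma\cap\Lambda(m)=T\cap\Lambda(m)$ is vacuous: it holds for every $m\geq 3$, because a finite-order holonomy matrix congruent to $1$ mod $m\geq3$ is trivial. It says nothing about a finite-order element of $\Lambda_0$ being congruent mod $m$ to an element of $\Gamma$, and that is exactly how torsion enters $\Gamma\Lambda(m)$. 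Separability is also not what controls the stabilizer. Since $\Lambda(m)$ is normal, $\Gamma\Lambda(m)\cap P=\Gamma\,(\Lambda(m)\cap P)$, and this equals $\Gamma$ as soon as $\Lambda(m)\cap P\subseteq T$.

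\textbf{How to close it.} Either cite McReynolds' theorem on controlling manifold covers \cite{McReynolds2009covers}, or prove directly that $\Lambda=\Gamma\Lambda(m)$ is torsion-free for suitably divisible $m$; identify $T$ with its lattice of translation vectors. For $m\geq3$, $\Lambda(m)\cap P$ consists of translations $u_t$, and $u_t\in\Lambda(m)$ forces $t\in\frac{m}{N}T$ for a fixed $N$. So choose $m$ divisible by two distinct odd primes with $\Lambda(m)\cap P\subseteq |H|\,T$. Suppose $g=\gamma\lambda$, with $\gamma\in\Gamma$ and $\lambda\in\Lambda(m)$, has prime order $p$. Then $\gamma^p\in\Gamma\cap\Lambda(m)$, which lies in $T$ and in $pT$ whenever $p$ divides $|H|$. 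Hence the holonomy $h$ of $\gamma$ has order $1$ or $p$.
\begin{itemize}
\item If $h=1$, reduce modulo an odd prime $\ell\mid m$ with $\ell\neq p$. Then $g$ still has order $p$, since $\Lambda(\ell)$ is torsion-free. But the unipotent $\gamma$ has order dividing $\ell$, since $(\gamma-1)^3=0$. This is a contradiction.
\item If $h$ has order $p$, then $p$ divides $|H|$, so $\gamma^p\in pT$, and also $\gamma^p\in T^h$. Moreover $\gamma^p\notin N_h(T)$, where $N_h=1+h+\dots+h^{p-1}$. Otherwise $\gamma^p=N_h(s)$ for some $s\in T$, so $(\gamma u_{-s})^p=1$ with $\gamma u_{-s}\neq1$, contradicting torsion-freeness of $\Gamma$. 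Yet $pT\cap T^h\subseteq N_h(T)$: if $py$ is $h$-fixed then so is $y$, and $py=N_h(y)$. This contradicts $\gamma^p\in pT$.
\end{itemize}
With this supplement, or the citation, your argument is complete.
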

It turns out that two rational quadratic forms $f_1$ and $f_2$ are projectively equivalent if and only if the forms $f_1\oplus \langle 1,-1\rangle$ and $f_2\oplus \langle 1,-1\rangle$ are projectively equivalent.\footnote{This follows from Witt cancellation and the observation that $\langle m, -m\rangle$ and $\langle 1,-1 \rangle$ are rationally equivalent for all $m\neq 0$:
\[
X^2 -Y^2=mx^2-my^2
\]
for
$X=\frac{1}{2}((1+m)x+(1-m)y)$ and $Y=\frac{1}{2}((1-m)x+(1+m)y)$}
Consequently, understanding the commensurability classes of arithmetic hyperbolic manifolds containing a given flat manifold $B$ as a cusp cross-section is equivalent to understanding its holonomy forms up to projective equivalence.

\section{Crystal families and their invariant forms}\label{sec:crystal_forms}
In order to understand the holonomy forms of a flat manifold it will be useful to think in terms of crystal families of representations. For $i=1,2$ let
\[
\rho_i: G_i \rightarrow GL_n(\Q)
\]
be two rational representations of finite groups. Then $\rho_1$ and $\rho_2$ belong to the same \emph{crystal family} if we may conjugate $\rho_1$ over $\Q$ so that the sets
\[
\calQ_i=\{Q\in M_{n\times n}(\Q)|\text{$Q^T=Q$ and $\rho_i(g)^TQ\rho_i(g)=Q$ for all $g\in G_i$}\}
\]
are the same for $i=1,2$. Equivalently $\rho_1$ and $\rho_2$ lie in the same crystal family if and only if they preserve exactly the same rational quadratic forms up to conjugation.

There are 12 crystal families of $\Q$-irreducible representations of dimension $n\leq 5$ \cite{Plesken84}. We use the naming convention for these families of irreducible representations as found in \cite{Plesken84}.
\begin{center}
\begin{tabular}{|c|c|}
\hline
    Dimension & Crystal families  \\
    \hline
      1& 1\\ 
      2& 2-1, 2-2\\
      3& 3\\ 
      4& 4-1, 4-1', 4-2, 4-2', 4-3, 4-3'\\
      5& 5-1, 5-2\\
      \hline
\end{tabular}
\end{center}
 In practice, only 10 of these crystal families arise as subrepresentations in the holonomy representations of flat manifolds of dimensions $n\leq 6$, with 4-3 and 5-2 being the pair of crystal families that do not appear \cite{Cid2001Computation}.

For each of these irreducible crystal families, we will need to know the set of positive definite rational quadratic forms $f$ such that $O(f;\Q)$ contains the image of a representation in that crystal family. For the crystal families containing $\R$-irreducible representations this is a straight-forward task. For any $\R$-irreducible representation $\rho$ the vector space of real quadratic forms preserved by $\rho$ is one-dimensional (see \cite[Lemma~VI.1]{Plesken81}, for example). Thus for crystal families that contain $\R$-irreducible representations, there is a single projective equivalence class of positive definite quadratic forms $f$ such that $O(f;\Q)$ contains representations from that family. This projective equivalence class can be found by taking one representation from the crystal family and finding a single positive definite rational quadratic form $f$ such that its image is contained in $O(f;\Q)$.

Fortunately, the majority of the crystal families we consider contain $\R$-irreducible representations. In fact we will need to consider only three families, 4-1', 4-2' and 4-3',  that do not contain $\R$-irreducible representations. For these families the space of invariant quadratic forms is 2-dimensional and the calculation is more involved.

We summarize necessary results for these families in the subsequent sections with proofs and calculations deferred to the appendices (Appendix~\ref{sec:R_irred} for the $\R$-irreducible case and Appendix~\ref{sec:R_red} for the three families that are reducible over $\R$). We include information on the families 4-3 and 5-2 for completeness.

\subsection{The 1-dimensional family}
There is a unique $1$-dimensional crystal family denoted simply by $1$. Clearly, given a representation $\rho: G\rightarrow GL_1(\Q)=\Q^\times$ with family symbol $1$, its image is contained in $O(q;\Q)$ for every 1-dimensional quadratic form.
\subsection{The 2-dimensional families}
There are two distinct irreducible $2$-dimensional crystal families. The first, denoted 2-1, contains representations whose image contains an element of order four. The second, denoted 2-2, contains the representations whose image contains an element of order three. Both families contain $\R$-irreducible representations. Analysis such as that found in \cite[Lemma~7.6]{McCoySell2024} yields the following result.
 \begin{lemma}\label{lem:dim2analysis}
     Let $\rho:G\rightarrow GL_2(\Q)$ be an irreducible representation. Let $f$ be a positive-definite rational quadratic form. Then $\rho$ is conjugate to a representation with image in $O(f;\Q)$ if and only if
     \begin{enumerate}[(a)]
         \item\label{it:2-1} $\rho$ has family symbol 2-1 and $f$ satisfies $d(f)=1$ and $\varepsilon_p(f)=1$ for all $p\equiv 1 \bmod 4$ or
         \item\label{it:2-2} $\rho$ has family symbol 2-2 and $f$ satisfies $d(f)=3$ and $\varepsilon_p(f)=1$ for all $p\equiv 1 \bmod 3$.
     \end{enumerate}\qed
 \end{lemma}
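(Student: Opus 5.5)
The plan is to split according to the family symbol. Both 2-1 and 2-2 contain $\R$-irreducible representations, so by the discussion above the positive definite invariant forms make up a single projective equivalence class. It therefore suffices to do three things: exhibit one representative form for each family, read off its invariants, and then describe its whole projective class using Proposition~\ref{prop:projective_invariants}(iii), which applies because $n=2\equiv 2 \bmod 4$. Conjugating $\rho$ over $\Q$ does not change its family symbol, so the phrase ``conjugate to a representation with image in $O(f;\Q)$'' amounts to saying that $f$ lies in this projective class.

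First I would justify the dichotomy. An irreducible rational $2$-dimensional representation of a finite group has finite image in $GL_2(\Q)$. Since the image is irreducible, it is nontrivial and not of exponent $2$. Its elements have orders in $\{1,2,3,4,6\}$, so the image contains an element of order $3$ or of order $4$. The paper's classification into 2-1 and 2-2 is by exactly this feature, so I can take it as given.

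For family 2-1, I take the representation generated by rotation through $\pi/2$, namely $\begin{pmatrix}0&-1\\1&0\end{pmatrix}$. It preserves $f=\langle 1,1\rangle$, which has $d=1$ and $\varepsilon_p=(1,1)_p=1$ for all $p$. Proposition~\ref{prop:projective_invariants}(iii) then says that $g$ is projectively equivalent to $f$ if and only if $g$ is positive definite, $d(g)=1$, and $\varepsilon_p(g)=1$ for every prime $p$ with $-1\in\Qpxsquare$. For odd $p$ the condition $-1\in\Qpxsquare$ holds exactly when $p\equiv 1\bmod 4$, and it fails for $p=2$. This is condition (a).

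For family 2-2, I take the order-$3$ rotation $\begin{pmatrix}0&-1\\1&-1\end{pmatrix}$, which preserves $x^2-xy+y^2\cong\langle 1,3\rangle$. This form has $d=3$ and $\varepsilon_p=(1,3)_p=1$. The relevant primes are those with $-3\in\Qpxsquare$. For odd $p\neq 3$ this holds exactly when $p\equiv 1\bmod 3$, by quadratic reciprocity. It fails for $p=3$, since $-3$ has odd valuation there, and for $p=2$, since $-3\equiv 5\bmod 8$. This gives (b).

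The main obstacle I expect is not the local computations above. It is justifying rigorously that every representation in the family, and not just the chosen representative, has its invariant positive definite forms confined to one projective class. This step relies on the cited one-dimensionality of the space of real invariant forms together with the defining property of crystal families. I would also check directly that the image of $\rho$ is $\R$-irreducible, which holds because it contains a rotation of order $\geq 3$.
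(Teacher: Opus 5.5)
Your proposal is correct and follows essentially the paper's argument. Both families consist of $\R$-irreducible representations, so the paper likewise exhibits one invariant positive definite form per family ($\langle 1,1\rangle$ and $\langle 1,3\rangle$) and reads off its projective class from Proposition~\ref{prop:projective_invariants}(iii). Your checks of when $-1$ and $-3$ are squares in $\Q_p$ are right. The obstacle you flag is simpler than you fear: after conjugation, the rational symmetric matrices fixed by a single element of order $4$ (resp.\ $3$) are already just the multiples of $x^2+y^2$ (resp.\ $x^2-xy+y^2$), so every representation in the family gives the same projective class.
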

\subsection{The 3-dimensional family}
There is a unique $3$-dimensional crystal family denoted by $3$. This family contains every $\Q$-irreducible 3-dimensional rational representation. If we consider, for example, the faithful 3-dimensional representation of $A_4$, we see that the representations in this family are $\R$-irreducible. Furthermore, since $A_4$ admits an orthogonal rational representation, we obtain the following result.
\begin{lemma}\label{lem:dim3analysis}
     Let $\rho:G\rightarrow GL_3(\Q)$ be an irreducible representation. Let $f$ be a positive-definite rational quadratic form. Then $\rho$ is conjugate to a representation with image in $O(f;\Q)$ if and only if $(d(f),-1)_p\varepsilon_p(f)=1$ for all $p$. \qed
 \end{lemma}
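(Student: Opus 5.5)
The plan is to reduce the statement to a single explicit representation together with the one-dimensionality of invariant forms. The first step is to show that every representation in family $3$ is $\R$-irreducible. Take the faithful $3$-dimensional representation of $A_4$ acting by signed permutation matrices, generated by the diagonal sign changes $\mathrm{diag}(\pm1,\pm1,\pm1)$ with determinant $1$ and the cyclic permutation of coordinates. A real invariant subspace would have to be a sum of joint eigenspaces of the diagonal Klein four-subgroup; these eigenspaces are the three coordinate lines, and they are permuted transitively by the $3$-cycle. Hence the representation is $\R$-irreducible. By definition of a crystal family, every representation in family $3$ then preserves exactly the same quadratic forms, up to rational conjugation, as this $A_4$ representation. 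As recalled above, the space of real invariant forms of an $\R$-irreducible representation is one-dimensional. Consequently the positive definite rational forms $f$ for which some conjugate of $\rho$ lands in $O(f;\Q)$ form exactly one projective equivalence class.

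The second step identifies that class. The $A_4$ representation consists of signed permutation matrices, so it preserves $I_3=\langle1,1,1\rangle$. Any other positive definite rational invariant form is a positive rational multiple of $I_3$. Therefore $\rho$ is conjugate into $O(f;\Q)$ if and only if $f$ is projectively equivalent to $\langle 1,1,1\rangle$. For the forward direction, if $\rho' = C\rho C^{-1}$ lies in $O(f;\Q)$, then $C^{T}QC$ is invariant under $\rho$, so it equals $mI$ up to the conjugation relating $\rho$ to the $A_4$ model. For the reverse direction, projective equivalence to $\langle1,1,1\rangle$ gives $f(v)=m\,g(Cv)$ with $g=\langle1,1,1\rangle$, and conjugating by $C$ carries $O(g;\Q)$ onto $O(f;\Q)$.

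The last step translates this into invariants using Proposition~\ref{prop:projective_invariants}(iv), since $n=3\equiv 3 \bmod 4$. Both forms are positive definite, so the signatures agree. The condition becomes $(d(f),-1)_p\varepsilon_p(f)=(1,-1)_p\varepsilon_p(\langle1,1,1\rangle)$ for all primes $p$. The right-hand side is $1$, because $\varepsilon_p(\langle1,1,1\rangle)=\prod_{i<j}(1,1)_p=1$ and $(1,-1)_p=1$. This is exactly the stated criterion.

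The only step needing care is the first one: justifying that every member of family $3$ shares the invariant forms of the $A_4$ model. This comes from the crystal-family definition, which lets us conjugate so that the spaces $\calQ_i$ coincide, combined with Plesken's classification, which shows that family $3$ contains every $\Q$-irreducible $3$-dimensional representation. Once that is in place, everything else follows directly.
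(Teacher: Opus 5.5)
Your proposal is correct and follows essentially the same route as the paper. You take the signed-permutation representation of $A_4$ (the paper uses the generators given by the cyclic permutation matrix and $\mathrm{diag}(1,-1,-1)$), use its $\R$-irreducibility to conclude that its invariant forms are exactly the rational multiples of $\langle 1,1,1\rangle$, and read off the criterion from Proposition~\ref{prop:projective_invariants}(iv). Your Klein four-group eigenspace argument for $\R$-irreducibility and your explicit translation into invariants only fill in details that the paper leaves implicit.
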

 \subsection{The 4-dimensional families}
 There are six crystal families of irreducible $4$-dimensional representations, denoted 4-1, 4-2, 4-3, 4-1', 4-2' and 4-3'. The families 4-1, 4-2 and 4-3 are all $\R$-irreducible, whereas 4-1', 4-2' and 4-3' all decompose over $\R$ into two distinct $\R$-irreducible subrepresentations. Representatives for each family can be described as follows.
 \begin{itemize}
     \item The family 4-1 contains the faithful rational representation of the quaternion group $Q_8$.
     \item The family 4-2 contains the faithful rational representation of $C_3\rtimes C_4$.
     \item The family 4-3 contains the irreducible 4-dimensional representations of $A_5$.
     \item The family 4-1' contains the faithful rational representation of $C_8$.
     \item The family 4-2' contains the irreducible faithful rational representation of $C_{12}$.
     \item The family 4-3' contains the faithful rational representation of $C_{5}$.
 \end{itemize}
 The following lemma summarizes the quadratic forms invariant under each of these crystal families. The analysis of the forms 4-1', 4-2' and 4-3' is reserved to the appendix.
 \begin{lemma}\label{lem:dim4analysis}
     Let $\rho:G\rightarrow GL_4(\Q)$ be a $\Q$-irreducible representation. Let $f$ be a positive-definite rational quadratic form. Then $\rho$ is conjugate to a representation with image in $O(f;\Q)$ if and only if
     \begin{enumerate}[(a)]
         \item\label{it:4-1} $\rho$ has family symbol 4-1 and $f$ satisfies $d(f)=1$ and $\varepsilon_p(f)=1$ for all $p$;
         \item\label{it:4-2} $\rho$ has family symbol 4-2 and $f$ satisfies $d(f)=1$ and $\varepsilon_p(f)=(3,3)_p$ for all $p$;
         \item\label{it:4-3} $\rho$ has family symbol 4-3 and $f$ satisfies $d(f)=5$ and $\varepsilon_p(f)=1$ for all $p\equiv 1,4\bmod 5$;
         \item\label{it:4-1'} $\rho$ has family symbol 4-1' and $f$ satisfies $d(f)=1$ and $\varepsilon_p(f)=1$ for all $p\equiv 1,3,5 \bmod 8$;
         \item\label{it:4-2'} $\rho$ has family symbol 4-2' and $f$ satisfies $d(f)=1$ and $\varepsilon_p(f)=1$ for all $p\equiv 1,5,7 \bmod 12$;
         \item\label{it:4-3'} $\rho$ has family symbol 4-3' and $f$ satisfies $d(f)=5$ and $\varepsilon_p(f)=1$ for all $p\equiv 1\bmod 5$.
     \end{enumerate}
 \end{lemma}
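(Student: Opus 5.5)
We split the six cases into two groups. The $\R$-irreducible families 4-1, 4-2 and 4-3 are handled by the paper's general principle. The $\R$-reducible families 4-1', 4-2' and 4-3' each need a two-parameter analysis.

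\emph{The $\R$-irreducible families.} For 4-1, 4-2 and 4-3 the space of invariant real forms is one-dimensional. So the admissible positive definite forms make up a single projective equivalence class, and it suffices to exhibit one invariant form per family and compute its invariants.

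\begin{itemize}
\item For 4-1, the faithful rational representation of $Q_8$ is left multiplication on the Hurwitz quaternions. It preserves the norm form $\langle 1,1,1,1\rangle$, which has $d=1$ and $\varepsilon_p=1$ for all $p$.
\item For 4-2, realize $C_3\rtimes C_4$ inside the quaternion algebra. Take $\omega=(-1+i\sqrt{3})/2$, realized as $(-1+i+j+k)/2$ in the Hurwitz order, together with a unit of order $4$ normalizing it. An invariant form is then $\langle 1,1,3,3\rangle$, a norm form on $\Q(\sqrt{-3})\oplus\Q(\sqrt{-3})j$. Its discriminant is $9=1$. Its Hasse–Witt invariant, computed with the convention $\varepsilon=\prod_{i<j}(a_i,a_j)$, is $(3,3)_p\cdot(1,\cdot)$ terms, i.e.\ $(3,3)_p$.
\item For 4-3, use the $A_5$-representation on $\{x\in\Q^5:\sum x_i=0\}$ with the restricted standard form. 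This form is equivalent to $\langle 1\cdot2,\,2\cdot3,\,3\cdot4,\,4\cdot5\rangle$ up to squares, i.e.\ $\langle 2,6,3,5\rangle\cong\langle 2,6,12,20\rangle$. Its discriminant is $5$.
\end{itemize}

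In each case we then invoke Proposition~\ref{prop:projective_invariants}(i) to read off the class. Forms with $d=1$ are determined by $\varepsilon_p$ at all $p$. For $d=5$, only the primes $p$ at which $5$ is a square matter, which are $p\equiv 1,4\bmod 5$ (together with the appropriate treatment of $p=2$ via reciprocity). We check the stated $\varepsilon_p$ at those primes by a direct Hilbert symbol computation.

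\emph{The $\R$-reducible families.} For 4-1', 4-2' and 4-3', take the representation of $C_m$ ($m=8,12,5$) by multiplication by $\zeta$ on the field $K=\Q(\zeta_m)$. The invariant rational forms are exactly $x\mapsto \mathrm{Tr}_{K^+/\Q}(\lambda\, x\bar x)$ for $\lambda\in K^+$, where $K^+$ is the real quadratic subfield $\Q(\sqrt2),\Q(\sqrt3),\Q(\sqrt5)$. Such a form is positive definite iff $\lambda$ is totally positive. Crystal family membership lets us use this single representative. Every invariant form for a representation in the family is conjugate to one invariant under this representative, so the answer is the set of such trace forms up to equivalence.

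The plan for these families is as follows.
\begin{enumerate}
\item Compute the discriminant. It is $d(K)\cdot N(\lambda)^2$ up to squares. Using $d(\Q(\zeta_8))=256$, $d(\Q(\zeta_{12}))=144$ and $d(\Q(\zeta_5))=125$, this gives $1,1,5$ respectively.
\item Compute $\varepsilon_p$ as a function of $\lambda$. Write the form as the transfer of the binary hermitian form $\lambda\langle 1,-a\rangle$ over $K^+$, where $K=K^+(\sqrt{a})$. Then compare with the diagonal form obtained from an explicit basis. For instance, $\mathrm{Tr}(\lambda x\bar x)$ on $K=K^+\oplus K^+\sqrt a$ is $\mathrm{Tr}_{K^+}\langle\lambda\rangle\perp\mathrm{Tr}_{K^+}\langle -a\lambda\rangle$, and binary trace forms have computable invariants.
\item Show necessity: for primes in the stated congruence classes, $\varepsilon_p=1$ regardless of $\lambda$. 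These are exactly the primes $p$ splitting in $K^+$, or more precisely those with suitable splitting in $K$. At such $p$ the local trace form becomes a sum of hyperbolic-like or standard pieces independent of $\lambda$.
\item Show sufficiency: any combination of $\varepsilon_p$ at the remaining primes, subject to the product formula and definiteness, is achieved. We do this by choosing totally positive $\lambda$ with prescribed local behavior, using weak approximation and the fact that $N(\lambda)$ ranges over norms from $K^+$.
\end{enumerate}

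\emph{Main obstacle.} The hardest step is step 4 for the three $\R$-reducible families, i.e.\ realizing every admissible invariant pattern by a totally positive $\lambda$, together with the careful bookkeeping at $p=2$ (and $p=3,5$). The approach I would try first is to parametrize $\varepsilon_p$ as $(N_{K^+/\Q}(\lambda),c)_p$ times a constant, for a fixed $c$ depending on $K$. Then the question reduces to which classes in $\Q^\times/(\Q^\times)^2$ are norms of totally positive elements of $K^+$. These are exactly the positive rationals that are local norms everywhere, by Hasse's norm theorem for cyclic extensions.
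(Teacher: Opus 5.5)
Your overall route is the paper's. You use one invariant form per $\R$-irreducible family. For 4-1$'$, 4-2$'$, 4-3$'$ you use a two-parameter family of invariant forms; your $\mathrm{Tr}_{K^+/\Q}(\lambda x\bar x)$ is the paper's Gram matrix $M(a,b)$ in another basis. In both, $\varepsilon_p$ is governed by a norm from the real quadratic subfield, and realizability comes from Hasse's norm theorem, which for quadratic fields is exactly Theorem~\ref{thm:rank2_realization}. Your local hyperbolicity argument for necessity would usefully replace the citation of Proposition~\ref{prop:cyclic_rep_calc}.

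However, two concrete steps fail as written.

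First, for 4-2, $C_3\rtimes C_4$ does not embed in Hamilton's quaternions. An order-$4$ unit $y$ with $y\omega y^{-1}=\omega^{-1}$ would make the algebra $\Q(\omega)\oplus\Q(\omega)y\cong(-3,-1)_\Q$, which ramifies at $\{3,\infty\}$ rather than $\{2,\infty\}$. Indeed, the normalizer of $\langle\omega\rangle$ in the Hurwitz unit group is only $\langle-\omega\rangle\cong C_6$. The algebra you then write down, $\Q(\sqrt{-3})\oplus\Q(\sqrt{-3})j$ with $j^2=-1$, is $(-3,-1)_\Q$. Its norm form $\langle 1,3,1,3\rangle$ is the correct one, so this is easily repaired.

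Second, and more importantly, in step~3 the primes with forced $\varepsilon_p$ are not the primes split in $K^+$. The local trace form is hyperbolic when $K\otimes\Q_p\cong(K^+\otimes\Q_p)^2$, i.e.\ when $-m\in\Qpxsquare$ for some imaginary quadratic $\Q(\sqrt{-m})\subset K$; this is the hypothesis of Proposition~\ref{prop:cyclic_rep_calc}(iii). For 4-1$'$, primes $p\equiv 3,5\bmod 8$ are inert in $\Q(\sqrt2)$ yet forced. Primes $p\equiv 7\bmod 8$ split in $\Q(\sqrt2)$ and are precisely where $\varepsilon_p$ varies. Your first formulation would therefore give the wrong set.

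Step~4, which you leave open, then goes as follows.

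For 4-1$'$ and 4-2$'$ one has $K=K^+(\sqrt{-1})$. Write $\lambda=s+t\sqrt D$ with $s>0$, $N=N_{K^+/\Q}(\lambda)$ and $D=2,3$. Your decomposition becomes $f\cong\mathrm{Tr}\langle\lambda\rangle\perp\mathrm{Tr}\langle\lambda\rangle\cong\langle 2s,2DsN,2s,2DsN\rangle$, whence $\varepsilon_p(f)=(DN,-1)_p$ at every prime. These are the paper's $(a^2-2b^2,-1)_p$ and $(3a^2-b^2,-1)_p$. Being global, they handle $p=2,3$ with no separate bookkeeping. This matters because $p=3$ is itself one of the free primes for 4-2$'$ (the paper's option $q_i=3$) and must actually be realized.

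For 4-3$'$ there is no fixed rational $c$. It would have to be a non-square unit at every $p\equiv 4\bmod 5$, which Dirichlet's theorem rules out. Moreover, the binary pieces of your decomposition depend on $\mathrm{Tr}(\lambda)$, not only on $N(\lambda)$. Instead, work locally. For $p\equiv 4\bmod 5$, $K\otimes\Q_p\cong L\times L$ with $L/\Q_p$ unramified quadratic. So $f\cong\lambda_1N_L\perp\lambda_2N_L$ and $\varepsilon_p(f)=(-1)^{v_p(N(\lambda))}$, which is the paper's third Claim. Since $d(f)=5$, only these primes (and $p\equiv1\bmod5$) matter. Any product of primes $\equiv 4\bmod 5$ is the norm of a totally positive element of $\Q(\sqrt5)$, which completes the realization.
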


\subsection{The 5-dimensional families}
There are two crystal families of irreducible $5$-dimensional representations, denoted 5-1 and 5-2. Both of these contain $\R$-irreducible representations. Representatives for each family can be described as follows.
 \begin{itemize}
     \item The family 5-1 contains the natural faithful representation of the semi-direct product $C_2^5 \rtimes C_5$.
     \item The family 5-2 contains the 5-dimensional representation of $A_6$.
 \end{itemize}
Both of these families consist of $\R$-irreducible representations.

  \begin{lemma}\label{lem:dim5analysis}
     Let $\rho:G\rightarrow GL_5(\Q)$ be a $\Q$-irreducible representation. Let $f$ be a positive-definite rational quadratic form. Then $\rho$ is conjugate to a representation with image in $O(f;\Q)$ if and only if
     \begin{enumerate}[(a)]
         \item\label{it:5-1} $\rho$ has family symbol 5-1 and $f$ satisfies $\varepsilon_p(f)=1$ for all $p$;
         \item\label{it:5-2} $\rho$ has family symbol 5-2 and $f$ satisfies $\varepsilon_p(f)=(3,3)_p$ for all $p$;
     \end{enumerate}\qed
 \end{lemma}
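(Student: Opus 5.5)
The plan is to treat the two cases of Lemma~\ref{lem:dim5analysis} separately. In each case we produce one explicit positive definite form preserved by a representative of the family, and then use $\R$-irreducibility to pass from that one form to the whole projective class.

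First step: reduction. Both families contain $\R$-irreducible representations, so for a representation $\rho$ in either family the space of real $\rho$-invariant quadratic forms is one-dimensional (\cite[Lemma~VI.1]{Plesken81}). Hence if $f_0$ is one positive definite rational form with $\rho(G)\subseteq O(f_0;\Q)$, then every rational invariant form is $mf_0$ with $m\in\Q_{>0}$. Being conjugate into $O(f;\Q)$ is invariant under rational equivalence of $f$, and all members of a crystal family preserve the same forms up to conjugation. It follows that the set of admissible $f$ is exactly the projective class of $f_0$. Since $n=5\equiv 1\bmod 4$, Proposition~\ref{prop:projective_invariants}(ii) says this class consists precisely of the positive definite rank $5$ forms $f$ with $\varepsilon_p(f)=\varepsilon_p(f_0)$ for all $p$; there is no discriminant condition. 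So it suffices to compute $\varepsilon_p(f_0)$ for a single well-chosen $f_0$ in each family.

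Family 5-1: take the natural representation of $C_2^5\rtimes C_5$ by signed permutation matrices. It preserves $f_0=\langle1,1,1,1,1\rangle$, all of whose Hilbert symbols $(1,1)_p$ are trivial, so $\varepsilon_p(f_0)=1$ for all $p$. This gives (a).

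Family 5-2: this is the main computation. I would realize the $5$-dimensional representation of $A_6$ (equivalently $S_6$ restricted, via the outer automorphism) inside a lattice where an invariant form is easy to diagonalize. The first candidate is the standard $5$-dimensional representation of $A_6$ on the hyperplane $\sum x_i=0$ in $\Q^6$; but I must check it lies in the family 5-2 rather than being exceptional. In any case it is an irreducible 5-dimensional rational representation of $A_6$, which is what the listed representative is. The invariant form is the restriction of $\sum x_i^2$, and a Gram–Schmidt diagonalization of the $A_5$ root lattice gives
\[
f_0\cong\langle 2,\tfrac{3}{2},\tfrac{4}{3},\tfrac{5}{4},\tfrac{6}{5}\rangle\cong\langle 2,6,3,5,30\rangle
\]
up to squares. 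Computing $\varepsilon_p(f_0)=\prod_{i<j}(a_i,a_j)_p$ with the usual Hilbert symbol rules at $p=2,3,5$ should yield $(3,3)_p$, which is $-1$ exactly at $p=2,3$ and $+1$ elsewhere. Alternatively, one can use $f_0\oplus\langle 6\rangle\cong\langle1,\dots,1\rangle$, so that $\varepsilon_p(f_0)\cdot(d(f_0),6)_p\cdot(\text{trivial})=1$ with $d(f_0)=6$, which gives $\varepsilon_p(f_0)=(6,6)_p=(6,-1)_p$. Comparing with $(3,3)_p=(3,-1)_p$, the difference is $(2,-1)_p=1$, so the two agree.

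The main obstacle is this last verification: making sure the chosen $A_6$ representation genuinely belongs to 5-2, and getting the Hilbert symbol bookkeeping right. If in doubt, I would double-check against the explicit Gram matrix from Plesken's tables for 5-2.
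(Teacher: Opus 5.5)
Your proposal is correct and follows the paper's argument: the paper's 5-2 representative is exactly the standard representation of $A_6$ on the sum-zero hyperplane (its invariant Gram matrix, with $2a$ on the diagonal and $a$ elsewhere, is the $A_5$ root lattice in the basis $e_i-e_0$), which settles your membership worry, and for 5-1 the paper likewise uses an orthogonal signed-permutation representative. Your identity $f_0\oplus\langle 6\rangle\cong\langle 1,1,1,1,1,1\rangle$, giving $\varepsilon_p(f_0)=(6,6)_p=(3,3)_p$, correctly supplies the Hasse--Witt computation that the paper only asserts.
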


\section{Results}
Given a flat manifold $B$, we describe our procedure for determining which commensurability classes of arithmetic hyperbolic manifolds contain $B$ as a cusp cross-section. Given a flat $n$-manifold $B$ in the database of low-dimensional flat manifolds, there is a presentation of its fundamental group as a subgroup of $GL_n(\Z)\ltimes \Q^n$, where the translation subgroup is $\Z^n$.\footnote{Although these manifolds can be generated using the CARAT package \cite{Carat2.1b1}, we used the tables available online at \cite{LutowskiCARAT}.} Using this description one may write down a holonomy representation $\rho_{\rm hol}$ for $B$. We then determine the unordered list $\mathcal{C}=(C_1, \dots, C_k)$ of crystal families such that $\rho_{\rm hol}$ can be decomposed as
\[
\rho_{\rm hol} \cong \rho_1 \oplus \dots \oplus \rho_k,
\]
where each $\rho_i$ is a rational subrepresentation that is irreducible over $\Q$ and $C_i$ is the crystal family containing $\rho_i$.

By Proposition~\ref{prop:form_decomposition} and Theorem~\ref{thm:realization}, $B$ can appear as a cusp cross-section in the commensurability class of arithmetic hyperbolic manifolds defined by a form $q$ if and only if $q$ is equivalent to a form
\[
q\cong f_1 \oplus \dots \oplus f_k \oplus \langle 1,-1\rangle,
\]
where $f_i$ is positive definite and $\rho_i$ has image in $O(f_i;\Q)$.

In particular, for a given $\rho_i$ the set of possible forms $f_i$ is determined by the crystal family of $\rho_i$. Consequently, the set of commensurability classes of arithmetic hyperbolic manifolds containing $B$ as a cusp cross-section is determined by the list of crystal families $\mathcal{C}$. Moreover, using the calculations of Section~\ref{sec:crystal_forms}, the list of crystal families $\mathcal{C}$ can be converted into a classification of all the commensurability classes of arithmetic hyperbolic manifolds containing $B$ as a cusp cross-section.

In practice, there are a relatively small set of possibilities for the list $\mathcal{C}$ that can arise for low-dimensional flat manifolds. As the cusp-occurrence of a flat manifold in commensurability classes of arithmetic manifolds is completely governed by its corresponding list $\mathcal{C}$, this allows us to partition the low-dimensional flat manifolds into a relatively small set of different types according to their cusp-occurrence in arithmetic manifolds. 
\begin{center}
\begin{tabular}{|c|c|c|}
\hline
    Dimension & \# possibilities for $\mathcal{C}$ & \# cusp-occurrence types \\
    \hline
    3 & 3 & 3 \\
    4 & 4 & 4\\
    5 & 10 & 7 \\
    6 & 17 & 10 \\
    \hline
\end{tabular}
\end{center}

If $\mathcal{C}$ contains three or more odd-dimensional crystal families, then $B$ appears as a cusp cross-section in every commensurability class of arithmetic hyperbolic manifolds \cite[Theorem~7.1]{McCoySell2024}.
For the remaining lists containing at most two odd-dimensional families (of which there are 24 possibilities), the derivation of the corresponding cusp-occurrence behaviour was carried out by hand. We omit the full details of these calculations as they are routine and somewhat repetitive. For the 3- and 4-dimensional flat manifolds these calculations are implicit in \cite[Section~7.3]{McCoySell2024}. For 5- and 6-dimensional flat manifolds we illustrate the flow of these calculations via several examples. 

\subsection{Deriving cusp-occurrence from the crystal family}
We now give some sample calculations illustrating how the cusp-occurrence conditions of Theorems~\ref{thm:dim3}--\ref{thm:dim6} arise. 
\subsubsection*{Example $\mathcal C=(2\text{-}1,2\text{-}2,1)$}
Suppose that $B$ is a flat $5$-manifold whose holonomy representation
has crystal-family decomposition
\[
    \mathcal C=(2\text{-}1,2\text{-}2,1).
\]
We explain the occurrence of the congruence condition in
type~\eqref{it:dim5 2-1,2-2} of Theorem~\ref{thm:dim5}.

By Proposition~\ref{prop:form_decomposition}, a holonomy form for $B$ decomposes as
\[
    f=f_1\oplus f_2\oplus\langle a\rangle,
\]
where $f_1$ is invariant under a representation of family
$2\text{-}1$, $f_2$ is invariant under a representation of family
$2\text{-}2$, and $a\in\mathbb Q_{>0}$.

By Lemma~\ref{lem:dim2analysis},
\[
    d(f_1)=1,
    \qquad
    \varepsilon_p(f_1)=1
    \quad\text{for }p\equiv1\bmod4,
\]
whereas
\[
    d(f_2)=3,
    \qquad
    \varepsilon_p(f_2)=1
    \quad\text{for }p\equiv1\bmod3.
\]
Now suppose that $p\equiv1\bmod{12}$. Then both $-1$ and $3$ are
squares in $\mathbb Q_p$. Consequently all the cross terms occurring
in the Hasse--Witt invariant of
$f_1\oplus f_2\oplus\langle a\rangle$ are trivial at $p$, and hence $\varepsilon_p(f)=1$.

Let
\[
    q=f\oplus\langle1,-1\rangle.
\]
Since
\[
    d(q)=-d(f)
\]
and
\[
    \varepsilon_p(q)
       =\varepsilon_p(f)(d(f),-1)_p,
\]
we obtain, for $p\equiv1\bmod{12}$,
\[
\begin{aligned}
    (d(q),-1)_p\varepsilon_p(q)
      &=
      (-d(f),-1)_p(d(f),-1)_p  \\
      &=
      (-1,-1)_p
       =1.
\end{aligned}
\]
Thus the holonomy decomposition
$\mathcal C=(2\text{-}1,2\text{-}2,1)$ imposes the condition
\[
    (d(q),-1)_p\varepsilon_p(q)=1
    \qquad\text{for every }p\equiv1\bmod{12},
\]
which is the condition appearing in type~\eqref{it:dim5 2-1,2-2} of Theorem~\ref{thm:dim5}.

Conversely, we need to verify that every form $q$ of signature $(6,1)$ satisfying type~\eqref{it:dim5 2-1,2-2} of Theorem~\ref{thm:dim5} is projectively equivalent to the form
\[
q\cong f_1 \oplus f_2 \oplus \langle a \rangle \oplus \langle 1,-1 \rangle
\]
where $a\in \Q_{>0}$ and $f_1$ and $f_2$ are invariant under representations of families $2\text{-}1$ and  $2\text{-}2$ respectively.

Suppose that we have such a form $q$. Let $d(q)=-d$ for $d\in \Q_{>0}$. Since $(d(q),-1)_p=1$ whenever $p\equiv 1 \bmod 12$, we have that $\varepsilon_p(q)=1$ for $p\equiv 1\bmod 12$.
By Lemma~\ref{lem:dim2analysis}, there exist positive definite forms $f_1$, $f_2$ invariant under representations of families $2\text{-}1$ and  $2\text{-}2$ respectively with the following invariants:
\begin{itemize}
    \item $f_1$ satisfies $d(f_1)=1$ and 
    \[\varepsilon_p(f_1)=\begin{cases}
        1& p\equiv 1,3,5 \bmod 12\\
        \varepsilon_p(q)(-3d(q),-1)_p & p\equiv 7,11 \bmod 12 
    \end{cases}
    \]
    \item $f_2$ satisfies $d(f_2)=3$ and 
    \[\varepsilon_p(f_2)=\begin{cases}
        1& p\equiv 1,7,11 \bmod 12\\
        \varepsilon_p(q)(-3d(q),-1)_p & p\equiv 3,5 \bmod 12 
    \end{cases}
    \]
\end{itemize}
Note that the value of $\varepsilon_2(f_i)$ will be determined by the values of $\varepsilon_p(f_i)$ for $p$ odd which have all been determined by the above conditions. 
    
We take $q'$ to be the form 
    \[
    q'\cong f_1 \oplus f_2 \oplus \langle 3 \rangle \oplus \langle 1,-1 \rangle.
    \]
    For $q'$ we have that $d(q')=-1\in \Qsquare$ and for any odd prime $p$
    \begin{align*}
        (d(q'),-1)_p\varepsilon_p(q')&=(-1,-1)_p\varepsilon_p(f_1)\varepsilon_p(f_2)(3,-3)_p(3,-1)_p\\
        &=\begin{cases}
        (-3,-1)_p& p\equiv 1 \bmod 12\\
        \varepsilon_p(q)(-3,-1)_p(-3d(q),-1)_p & p\equiv 3,5,7,11 \bmod 12 
    \end{cases}\\
    &= (d(q),-1)_p\varepsilon_p(q),
    \end{align*}
using the fact that $(d(q),-1)_p=(-3,-1)_p=1=\varepsilon_p(q)$ for $p\equiv 1 \bmod 12$. Thus we see that $(d(q),-1)_p\varepsilon_p(q)=(d(q'),-1)_p\varepsilon_p(q')$ for all odd primes. By Proposition~\ref{prop:projective_invariants}, this implies that $q$ and $q'$ are projectively equivalent. Thus $q$ is projectively equivalent to a quadratic form in the desired form.
\subsubsection*{Example $\mathcal C=(4\text{-}2,1,1)$}
Consider a flat $6$-manifold $B$ whose holonomy representation has
crystal-family decomposition
\[
    \mathcal C=(4\text{-}2,1,1).
\]
We show directly that this gives type~\eqref{it:dim6 4-2} of Theorem~\ref{thm:dim6}.

By Proposition~\ref{prop:form_decomposition} and Theorem~\ref{thm:realization}, $B$ appears as a cusp cross-section in the commensurability class defined by a form $q$ if and only if $q$ is equivalent to 
\[
    q\cong g\oplus\langle a,b\rangle \oplus \langle 1,-1\rangle
\]
where $g$ satisfies the conditions of Lemma~\ref{lem:dim4analysis}\eqref{it:4-2} and $a,b\in\mathbb Q_{>0}$.

Since $d(g)=1$, we have $d(q)=-ab$ and calculating the Hasse--Witt invariants gives
\[
\begin{aligned}
    \varepsilon_p(q)
      &=\varepsilon_p(g)(a,b)_p(ab,-1)_p \\
      &=(3,3)_p(a,b)_p(ab,-1)_p.
\end{aligned}
\]
Now suppose that $d(q)\in(\mathbb Q_p^\times)^2$. Since $d(q)=-ab$, this implies
\[
    ab\equiv -1
    \bmod{(\mathbb Q_p^\times)^2}.
\]
Consequently
\[
    (a,b)_p=(a,-a)_p=1 \qquad\text{and}\qquad (ab,-1)_p=(-1,-1)_p.
\]
It follows that
\[
    \varepsilon_p(q)
      =(-1,-1)_p(3,3)_p
      =(-3,-1)_p
\]
whenever $d(q)$ is a square in $\mathbb Q_p$. That is, we have the defining condition for type~\eqref{it:dim6 4-2} of Theorem~\ref{thm:dim5}

Conversely, suppose that $q$ is a rational quadratic form of signature
$(7,1)$ satisfying
\[
    \varepsilon_p(q)=(-1,-1)_p(3,3)_p=(-3,-1)_p
\]
for every prime $p$ such that
$d(q)\in(\mathbb Q_p^\times)^2$. Write $-d=d(q)$ and consider a positive definite form $f$ such that $d(f)=1$ and $\varepsilon_p(f)=(3,3)_p$ for all $p$ (for example, the diagonal form $f=\langle 3,3,1,1\rangle$). Take
\[
    q'=f\oplus \langle 1,d\rangle\oplus\langle1,-1\rangle,
\]
then $d(q')=-d=d(q')$, and
\begin{align*}
    \varepsilon_p(q')&=\varepsilon_p(f)(d,-1)_p\\
    &=(3,3)_p(d,-1)_p
\end{align*}
In particular, whenever $-d=d(q)=d(q')$ is a square in $\Q_p$ we have that 
\[\varepsilon_p(q')=(3,3)_p(-1,-1)_p=\varepsilon_p(q).\]
Proposition~\ref{prop:projective_invariants} therefore implies that $q$ and $q'$ are projectively
equivalent.

Thus a flat $6$-manifold with
\[
    \mathcal C=(4\text{-}2,1,1)
\]
occurs as a cusp cross-section in the commensurability class defined by
$q$ if and only if
\[
    \varepsilon_p(q)=(-1,-1)_p(3,3)_p
\]
for every prime $p$ such that
$d(q)\in(\mathbb Q_p^\times)^2$. This is the condition defining
type~\eqref{it:dim6 4-2}.

\subsubsection*{Examples $\mathcal C=(4\text{-}1,2\text{-}1)$ and $\mathcal C=(2\text{-}1,2\text{-}1,2\text{-}1)$}
Consider a flat $6$-manifold $B$ whose holonomy representation has
crystal-family decomposition
\[
    \mathcal C=(4\text{-}1,2\text{-}1).
\]
We show directly that this gives type~\eqref{it:dim6 2-1,2-1,2-1} of Theorem~\ref{thm:dim6}.

By Theorem~\ref{thm:realization} and Proposition~\ref{prop:form_decomposition} $B$ can appear as a cusp cross-section in the commensurability class of arithmetic hyperbolic manifolds defined by $q$ if and only if $q$ is projectively equivalent to
\[
q\cong f\oplus g \oplus \langle 1,-1\rangle
\]
for $f$ and $g$ that are positive definite forms that are invariant under representations of families $4\text{-}1$ and $2\text{-}1$ respectively.

By Lemma~\ref{lem:dim4analysis}\eqref{it:4-1} the form $f$ satisfies $d(f)=1$ and $\varepsilon_p(f)=1$ for all $p$. By Lemma~\ref{lem:dim2analysis}\eqref{it:2-1}, the form $g$ satisfies $d(g)=1$ and $\varepsilon_p(g)=1$ for all $p\equiv 1 \bmod 4$. For form $q$ as above we have that $d(q)=-1$ and that
\[
\varepsilon_p(q)=\varepsilon_p(f)\varepsilon_p(g)=\varepsilon_p(g)
\]
Consequently, $\varepsilon_p(q)=1$ for all $p\equiv 1 \bmod 4$, as required by condition~\eqref{it:dim6 2-1,2-1,2-1} of Theorem~\ref{thm:dim6}. Conversely, Proposition~\ref{prop:projective_invariants} and the fact that $p\equiv 1\bmod 4$ if and only if $-1\in\Qpxsquare$ shows that all such $q$ lie in a unique projective equivalence class.

It follows that every flat $6$-manifold with
$\mathcal C=(4\text{-}1,2\text{-}1)$ occurs as a cusp cross-section in
a unique arithmetic commensurability class. This is precisely
type~\eqref{it:dim6 2-1,2-1,2-1} of Theorem~\ref{thm:dim6}.

A similar calculation applies when
\[
    \mathcal C=(2\text{-}1,2\text{-}1,2\text{-}1).
\]

In this case, we consider forms $q$ that are projectively equivalent to
\[
    q\cong f_1 \oplus f_2 \oplus f_3 \oplus \langle 1,-1 \rangle,
\]
where each $f_i$ is positive definite and invariant under a representation in family 2-1. By Lemma~\ref{lem:dim2analysis}\eqref{it:2-1}, $f_1$, $f_2$ and $f_3$ satisfy $d(f_i)=1$ and $\varepsilon_p(f_i)=1$ for all $p\equiv 1 \bmod 4$. It is then easy to calculate that $d(q)=-1$ and that $\varepsilon_p(q)=1$ for all $p\equiv 1 \bmod 4$.  Thus this crystal-family decomposition also gives type~\eqref{it:dim6 2-1,2-1,2-1} of Theorem~\ref{thm:dim6}.

\subsection{Dimension 3} The analysis for the six orientable flat 3-manifolds contained in this section was previously known \cite{Sell}. In this paper we also include the four non-orientable flat 3-manifolds. It turns out that every non-orientable flat 3-manifold appears as a cusp cross-section in every commensurability class of arithmetic hyperbolic 4-manifolds.

The commensurability classes of arithmetic hyperbolic 4-manifolds are defined by rational quadratic forms of signature $(4,1)$. As stated in Proposition~\ref{prop:projective_invariants}, the projective equivalence class of such a form $q$ is determined by the Hasse-Witt invariants $\varepsilon_p(q)$ for all $p$.

\begin{theorem}\label{thm:dim3}
The flat $3$-manifolds partition into three types according to their occurrence as cusp cross-sections in arithmetic commensurability classes. Each manifold belongs to exactly one of the following types, where the condition describes necessary and sufficient conditions for occurrence as a cusp cross-section in the commensurability class defined by $q$:
\begin{enumerate}[(a)]
    \item\label{it:dim3none} no conditions;
    \item\label{it:dim3mod3} $\varepsilon_p(q)=1$ for every prime $p \equiv 1 \bmod{3}$;
    \item\label{it:dim3mod4} $\varepsilon_p(q)=1$ for every prime $p \equiv 1 \bmod{4}$.
\end{enumerate}
The number of manifolds of each type is listed in Table~\ref{tab:dim3stats} with a breakdown of the distribution by holonomy group in Table~\ref{tab:dim3}.
\end{theorem}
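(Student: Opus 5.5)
The plan is to run the procedure described at the start of this section for each of the ten flat $3$-manifolds. First, for each manifold we read off from the classification the holonomy group $H$ and the list $\mathcal{C}$ of crystal families in the rational decomposition of $\rho_{\rm hol}$. The holonomy groups are $1$, $C_2$, $C_2^2$, $C_3$, $C_4$ and $C_6$. The trivial group and the $2$-groups of exponent two act diagonalizably over $\Q$, giving $\mathcal{C}=(1,1,1)$. The groups $C_3$ and $C_6$ act through a rank-two summand, giving $(2\text{-}2,1)$. The group $C_4$ gives $(2\text{-}1,1)$. A $3$-dimensional irreducible family cannot occur, since it would force a holonomy group containing $A_4$-type behaviour, and that does not happen for flat $3$-manifolds. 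The main bookkeeping step is to check these lists against the tables of \cite{Cid2001Computation}. In particular, for the four non-orientable manifolds the holonomy is $C_2$ or $C_2^2$ acting diagonally, so every non-orientable manifold has $\mathcal{C}=(1,1,1)$.

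For $\mathcal{C}=(1,1,1)$ there are three odd-dimensional families, so \cite[Theorem~7.1]{McCoySell2024} applies directly and gives type~\eqref{it:dim3none}.

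For $\mathcal{C}=(2\text{-}1,1)$ we argue along the lines of the worked examples. By Proposition~\ref{prop:form_decomposition} and Theorem~\ref{thm:realization}, the relevant forms are $q=g\oplus\langle a\rangle\oplus\langle1,-1\rangle$ with $g$ as in Lemma~\ref{lem:dim2analysis}\eqref{it:2-1}. Then $d(q)=-a$, and
\[
\varepsilon_p(q)=\varepsilon_p(g)(a,-1)_p\cdot(\text{terms involving }d(g)=1)=\varepsilon_p(g)(a,-1)_p .
\]
For $p\equiv1\bmod4$, $-1$ is a square in $\Q_p$, so $(a,-1)_p=1$ and $\varepsilon_p(g)=1$. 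Hence $\varepsilon_p(q)=1$, which is the condition of type~\eqref{it:dim3mod4}.

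For the converse, take a signature $(4,1)$ form $q$ with $\varepsilon_p(q)=1$ for all $p\equiv1\bmod4$. Using the existence part of Lemma~\ref{lem:dim2analysis} (together with Hilbert reciprocity to settle $\varepsilon_2$ and the place at infinity), choose a positive definite $g$ with $d(g)=1$, $\varepsilon_p(g)=1$ for $p\equiv1\bmod4$, and $\varepsilon_p(g)=\varepsilon_p(q)$ for $p\equiv3\bmod4$. Then $q'=g\oplus\langle1\rangle\oplus\langle1,-1\rangle$ has $\varepsilon_p(q')=\varepsilon_p(q)$ at all odd primes, and hence at $2$ by reciprocity. Proposition~\ref{prop:projective_invariants}(ii) then gives that $q$ and $q'$ are projectively equivalent.

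The case $\mathcal{C}=(2\text{-}2,1)$ is identical with $d(g)=3$, and here the extra cross term is
\[
(3,a)_p(3a,-1)_p\cdots
\]
For $p\equiv1\bmod3$ with $p$ odd, $3$ is a square in $\Q_p$, but $-1$ need not be. So the cross terms should be assembled carefully, perhaps after replacing $\langle a\rangle$ by $\langle 3a\rangle$ or rescaling. I expect the net invariant to reduce to $\varepsilon_p(g)=1$, giving type~\eqref{it:dim3mod3}. This Hilbert-symbol bookkeeping, along with the realizability of the prescribed invariants of $g$ (one needs an even number of $-1$'s including $p=2$ and $\infty$), is the main obstacle. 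If the first choice of $q'$ fails parity at $p=2$, I would adjust it by changing the $\langle a\rangle$ summand to $\langle 3\rangle$ as in the worked example.

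Finally, the three types are genuinely distinct. Exhibit $q=\langle 1,1,1,1,-1\rangle$-type forms with prescribed $\varepsilon_p$: for instance, one with $\varepsilon_{13}=\varepsilon_{p}=-1$ for a suitable second prime violates both congruence conditions, and choosing the primes $7$ versus $5$ separates (b) from (c). The counts in the tables then come from tallying the manifolds in each type.
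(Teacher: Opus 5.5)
Your route is the paper's. You read off $\mathcal{C}$ from the classification, dispose of $(1,1,1)$ with \cite[Theorem~7.1]{McCoySell2024}, and treat $(2\text{-}1,1)$ and $(2\text{-}2,1)$ by a Hasse--Witt computation plus a converse via Proposition~\ref{prop:projective_invariants}(ii). Your assignments of $\mathcal{C}$ and your treatment of $(2\text{-}1,1)$ are fine.

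The gap is the case $\mathcal{C}=(2\text{-}2,1)$, which you leave as an expectation. It also rests on a false fact: for $p\equiv 1\bmod 3$ it is $-3$, not $3$, that is a square in $\Q_p$ (e.g.\ $3$ is not a square mod $7$). With $d(g)=3$ the cross terms regroup as
\[
\varepsilon_p(q)=\varepsilon_p(g)\,(3,a)_p\,(3a,-1)_p=\varepsilon_p(g)\,(3,-1)_p\,(-3,a)_p .
\]
For $p\equiv 1\bmod 3$ one has $(3,-1)_p=1$ since $p\neq 2,3$, and $(-3,a)_p=1$ since $-3\in\Qpxsquare$. Hence $\varepsilon_p(q)=\varepsilon_p(g)=1$, which is the missing proof that type (b) is necessary.

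The converse is also not ``identical'' to the $(2\text{-}1,1)$ case. Taking $a=1$ gives $\varepsilon_p(q')=\varepsilon_p(g)(3,-1)_p$, and $(3,-1)_p=-1$ exactly for $p=2,3$. So if you follow the $(2\text{-}1,1)$ recipe and choose $g$ with $\varepsilon_p(g)=\varepsilon_p(q)$ at every odd $p\not\equiv 1\bmod 3$, then $q'$ disagrees with $q$ at $p=3$, and hence at $2$ by reciprocity. The defect sits at $3$; it is not a parity problem at $2$.

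Your proposed remedy of replacing $\langle a\rangle$ by $\langle 3\rangle$ changes nothing, because $(-3,3)_p=1$ for every $p$. What works is either of the following:
\begin{enumerate}[(i)]
\item Prescribe $\varepsilon_3(g)=-\varepsilon_3(q)$. This is allowed because $-3\notin(\Q_3^\times)^2$; for instance use $g=\langle c,3c\rangle$, which has $\varepsilon_p(g)=(c,-3)_p$.
\item Take $a=2$, since $(-3,2)_3=-1$ while $(-3,2)_p=1$ at all other odd primes.
\end{enumerate}
In either case $q$ and $q'$ agree at all odd primes, hence at $2$ by reciprocity, and Proposition~\ref{prop:projective_invariants}(ii) finishes the case.
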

\begin{table}[ht]
\begin{tabular}{|c | c | c | c|}
\hline
Type & $\mathcal{C}$ &  \# Manifolds & \# Orientable \\
\hline
\eqref{it:dim3none} & 1,1,1 & 7 & 3 \\
\hline
\eqref{it:dim3mod3}& 2-2,1 & 2 & 2 \\
\hline
\eqref{it:dim3mod4}& 2-1,1 & 1 & 1 \\
\hline \hline

Total &  & 10 & 6 \\
\hline
\end{tabular}
\caption{Distribution of flat 3-manifolds among the different types.}
\label{tab:dim3stats}
\end{table}

\begin{itemize}
    \item Every flat 3-manifold arises as a cusp cross-section in the arithmetic commensurability class defined by the form
    \[
        q_0(x)=x_1^2+x_2^2+x_3^2+x_4^2-x_5^2
    \]
    since $q_0$ satisfies $\varepsilon_p(q_0)=1$ for all primes $p$. 
    
    \item Every flat 3-manifold arises as a cusp cross-section in infinitely many distinct arithmetic commensurability classes. This is because there are infinitely many distinct projective equivalence classes of quadratic forms satisfying conditions \eqref{it:dim3none}, \eqref{it:dim3mod3} and \eqref{it:dim3mod4}. For example, one may consider the form 
    \[
    q_\ell(x)= \ell x_1^2+\ell x_2^2+x_3^2+x_4^2-x_5^2,
    \]
    for $\ell\equiv 11\bmod 12$ a prime. The projective invariants of $q_\ell$ can be computed as 
    \[
    \varepsilon_p(q_\ell)=(\ell,\ell)_p
    =\begin{cases}
        -1 & p=2, \ell\\
        1 & \text{otherwise.}
    \end{cases}
    \]
\end{itemize}

\subsection{Dimension 4}
The analysis for the 27 orientable flat 4-manifolds contained in this section was previously known \cite{McCoySell2024}. In this paper we extend the analysis to include the 47 non-orientable flat 4-manifolds too.
The commensurability classes of arithmetic hyperbolic 5-manifolds are defined by rational quadratic forms of signature $(5,1)$. The projective classes of such forms are determined by the discriminant $d(q)<0$ and the Hasse-Witt invariants $\varepsilon_p(q)$ at primes such that $-d(q)$ is a square in $\Q_p$.

\begin{theorem}\label{thm:dim4}
The flat 4-manifolds partition into four types according to their occurrence as cusp cross-sections in arithmetic commensurability classes. Each manifold belongs to exactly one of the following types, where the condition describes necessary and sufficient conditions for occurrence as a cusp cross-section in the commensurability class defined by $q$:
\begin{enumerate}[(a)]
    \item\label{it:dim4none} no conditions;
    \item\label{it:dim4mod3} $\varepsilon_p(q)=1$ for every prime such that $p\equiv 1 \bmod 3$ and $-d(q)\in \Qpxsquare$;
    \item\label{it:dim4mod4} $\varepsilon_p(q)=1$ for every prime such that $p\equiv 1 \bmod 4$ and $-d(q)\in \Qpxsquare$;
     \item\label{it:dim4modother} $\varepsilon_p(q)=1$ for every prime such that $-d(q)\in \Qpxsquare$.
     \end{enumerate}
The number of manifolds of each type is listed in Table~\ref{tab:dim4stats} with a breakdown of the distribution by holonomy group in Table~\ref{tab:dim4}.
\end{theorem}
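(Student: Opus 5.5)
The proof will follow the procedure of Section 4. First, using the CARAT tables, I compute for each of the 74 flat 4-manifolds the holonomy representation and its decomposition into $\Q$-irreducible pieces. This gives the list $\mathcal{C}$ of crystal families. The expected outcome is exactly four lists: $(1,1,1,1)$, $(2\text{-}2,1,1)$, $(2\text{-}1,1,1)$, and one further list, which should be $(2\text{-}1,2\text{-}2)$ or possibly a purely 2-dimensional list such as $(2\text{-}1,2\text{-}1)$. The last two cannot both occur with distinct types, so the computer check decides which ones actually arise. This step also produces the counts for Tables~\ref{tab:dim4stats} and~\ref{tab:dim4}.

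Second, I translate each list into its condition on $q$, using Theorem~\ref{thm:realization} and Proposition~\ref{prop:form_decomposition}.
\begin{itemize}
\item For $(1,1,1,1)$ there are three or more odd-dimensional families, so \cite[Theorem~7.1]{McCoySell2024} gives type~\eqref{it:dim4none}.
\item For $(2\text{-}2,1,1)$, write $q\cong g\oplus\langle a,b\rangle\oplus\langle 1,-1\rangle$ with $d(g)=3$ and $\varepsilon_p(g)=1$ for $p\equiv 1\bmod 3$. Then $d(q)=-3ab$. Suppose $-d(q)\in\Qpxsquare$ and $p\equiv 1\bmod 3$, so that $3$ is a square in $\Q_p$. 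Then $ab$ is a square in $\Q_p$, and a direct calculation as in the example $\mathcal C=(4\text{-}2,1,1)$ shows that every cross term is trivial, giving $\varepsilon_p(q)=1$.
\item For the converse, given $q$ with $d(q)=-d$ satisfying~\eqref{it:dim4mod3}, I take $q'=g\oplus\langle 1,3d\rangle\oplus\langle1,-1\rangle$. Here I choose $g$ via Lemma~\ref{lem:dim2analysis} with prescribed $\varepsilon_p(g)$ at $p\not\equiv 1\bmod 3$ so as to match $\varepsilon_p(q)$ wherever $-d(q)$ is a square in $\Q_p$. Then Proposition~\ref{prop:projective_invariants}(iii) gives the projective equivalence.
\item The case $(2\text{-}1,1,1)$ is identical with $3$ replaced by $1$ and $p\equiv 1\bmod 4$, giving type~\eqref{it:dim4mod4}.
\end{itemize}

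For the fourth list, $(2\text{-}1,2\text{-}2)$, I get $d(q)=-3$. At a prime where $3$ is a square, a cross-term computation shows $\varepsilon_p(q)=1$, using $\varepsilon_p(f_1)=1$ when $p\equiv1\bmod4$ and $\varepsilon_p(f_2)=1$ when $p\equiv 1\bmod 3$. I also need to check that these cases exhaust the primes where $3$ is a square, which is where the quadratic reciprocity bookkeeping enters. For the converse I choose $f_1,f_2$ freely at the remaining primes, subject to the product formula, exactly as in the example $\mathcal C=(2\text{-}1,2\text{-}2,1)$. The result is type~\eqref{it:dim4modother} with the discriminant forced.

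The main obstacle is the converse direction. One has to realize prescribed local invariants by forms in the given crystal families while respecting the Hilbert reciprocity constraint at $p=2$ and $\infty$. I handle it by fixing the odd primes first and letting $\varepsilon_2$ be determined by the product formula, checking that this value is compatible with positive definiteness. The converse computations are routine once $\mathcal{C}$ is known.
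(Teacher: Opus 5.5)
Your procedure matches the paper's: read $\mathcal{C}$ off the CARAT tables, then convert each list via Theorem~\ref{thm:realization} and Proposition~\ref{prop:form_decomposition}. However, your fourth list is wrong, and the analysis you attach to it does not yield type (d).

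The lists occurring in dimension $4$ are $(1,1,1,1)$, $(2\text{-}2,1,1)$, $(2\text{-}1,1,1)$ and $(3,1)$. Type (d) consists of the three manifolds with $\mathcal{C}=(3,1)$ (holonomy $A_4$ and $C_2\times A_4$). Your plan never uses the family $3$ or Lemma~\ref{lem:dim3analysis}.

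Neither of your candidates could give (d) in any case.
\begin{itemize}
\item For $(2\text{-}1,2\text{-}2)$ you get $d(q)=-3$, whereas (d) imposes nothing on the discriminant; for example, $\langle 1,1,1,1,1,-1\rangle$ satisfies (d).
\item Your exhaustion check also fails. We have $3\in\Qpxsquare$ exactly when $p\equiv\pm1\bmod 12$. For $p\equiv 11\bmod 12$ (e.g.\ $p=11$) neither $-1$ nor $-3$ is a square, so $\varepsilon_p(f_1)$ and $\varepsilon_p(f_2)$ are unconstrained. That list therefore leads to a condition forcing $d(q)=-3$ and constraining $\varepsilon_p(q)$ only at $p\equiv 1\bmod 12$.
\item Likewise $(2\text{-}1,2\text{-}1)$ forces $d(q)=-1$ and constrains $\varepsilon_p(q)$ only at $p\equiv 1\bmod 4$, which is also not (d).
\end{itemize}
A list made only of even-dimensional families always pins down $d(q)$, which should have warned you.

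The missing case is short. By Lemma~\ref{lem:dim3analysis}, $q\cong g\oplus\langle a\rangle\oplus\langle 1,-1\rangle$ with $(d(g),-1)_p\varepsilon_p(g)=1$ for all $p$. Hence $-d(q)=a\,d(g)$ and $\varepsilon_p(q)=\varepsilon_p(g)(d(g),a)_p(a\,d(g),-1)_p$. If $a\,d(g)\in\Qpxsquare$, then $(d(g),a)_p=(d(g),d(g))_p=(d(g),-1)_p$ and the last factor is $1$, so $\varepsilon_p(q)=1$.

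Conversely, suppose $q$ satisfies (d) and $d(q)=-d$. Then $q'=\langle 1,1,1,d,1,-1\rangle$ has $d(q')=-d$ and $\varepsilon_p(q')=(d,-1)_p$, which equals $1$ wherever $d\in\Qpxsquare$. So $q$ and $q'$ are projectively equivalent by Proposition~\ref{prop:projective_invariants}(iii).

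There is also a smaller slip in your $(2\text{-}2,1,1)$ case. For $p\equiv 1\bmod 3$ it is $-3$, not $3$, that is a square in $\Q_p$ (try $p=7$). So $3ab\in\Qpxsquare$ gives $ab\equiv -1$ modulo squares, not $ab$ a square. With your premise the cross term $(a,b)_p=(a,-1)_p$ need not vanish. With the correct premise, $(a,b)_p=(a,-a)_p=1$ and $(3,3)_p=1$ for $p\neq 2,3$, so your conclusion $\varepsilon_p(q)=1$ still holds.
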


\begin{table}[ht]
    \centering
\begin{tabular}{|c | c | c | c|}
\hline
Type & $\mathcal{C}$ &  \# Manifolds & \# Orientable \\
\hline
\eqref{it:dim4none} & 1,1,1,1 & 44  & 12 \\
\hline
\eqref{it:dim4mod3}& 2-2,1,1 & 11 & 7 \\
\hline
\eqref{it:dim4mod4}& 2-1,1,1 & 16  & 6 \\
\hline

\eqref{it:dim4modother}& 3,1 & 3  & 2 \\
\hline
\hline
Total & & 74 & 27 \\
\hline
\end{tabular}
\caption{Distribution of flat 4-manifolds among the different types.}
\label{tab:dim4stats}
\end{table}
\begin{itemize}
    \item Every flat 4-manifold arises as a cusp cross-section in the arithmetic commensurability class defined by the form
    \[
    q_0(x)=x_1^2+x_2^2+x_3^2+x_4^2+x_5^2-x_6^2
    \]
    since $q_0$ satisfies $d(q_0)=-1$ and $\varepsilon_p(q_0)=1$ for all $p$. 

    \item Every flat 4-manifold arises as a cusp cross-section in infinitely many distinct arithmetic commensurability classes. This is because there are infinitely many distinct projective equivalence classes of quadratic forms satisfying conditions \eqref{it:dim4none}, \eqref{it:dim4mod3}, \eqref{it:dim4mod4} and \eqref{it:dim4modother}. For example, we may consider
    \[
    q_\ell(x)= \ell x_1^2+x_2^2+x_3^2+x_4^2+x_5^2-x_6^2,
    \]
    for $\ell\equiv 1\bmod 4$ a prime. We have $d(q_\ell)=-\ell$ and $\varepsilon_p(q_\ell)=1$ for all $p$.
\end{itemize}

\subsection{Dimension 5}
The commensurability classes of arithmetic hyperbolic 6-manifolds are defined by rational quadratic forms of signature $(6,1)$. The projective classes of such forms are determined by the rescaled Hasse-Witt invariants $(d(q),-1)_p\varepsilon_p(q)$ at all primes.

\begin{theorem}\label{thm:dim5}
The flat 5-manifolds partition into seven types according to their occurrence as cusp cross-sections in arithmetic commensurability classes. Each manifold belongs to exactly one of the following types, where the condition describes necessary and sufficient conditions for occurrence as a cusp cross-section in the commensurability class defined by $q$:
\begin{enumerate}[(a)]
    \item\label{it:dim5none} no conditions;
    \item\label{it:dim5 2-2,2-2} $(d(q),-1)_p\varepsilon_p(q)=1$ for all $p\equiv 1 \bmod{3}$;
    \item\label{it:dim5 2-1,2-1} $(d(q),-1)_p\varepsilon_p(q)=1$ for all $p\equiv 1 \bmod{4}$;
    
    \item\label{it:dim5 2-1,2-2} $(d(q),-1)_p\varepsilon_p(q)=1$ for all $p\equiv 1\bmod{12}$;
    \item\label{it:dim5 4-3p} $(d(q),-1)_p\varepsilon_p(q)=1$ for all $p\equiv 1 \bmod{5}$;
    \item\label{it:dim5 4-1p} $(d(q),-1)_p\varepsilon_p(q)=1$ for all $p\equiv 1,3,5 \bmod{8}$;
    \item\label{it:dim5 4-2p} $(d(q),-1)_p\varepsilon_p(q)=1$ for all $p\equiv 1,5,7 \bmod{12}$.
 \end{enumerate}
The number of manifolds of each type is listed in Table~\ref{tab:dim5stats} with a breakdown of the distribution by holonomy group in Table~\ref{tab:dim5}.
\end{theorem}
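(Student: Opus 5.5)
The proof has two parts. The first is computational: enumerate the possible lists $\mathcal{C}$ for flat 5-manifolds. The second converts each list into one of the seven conditions (a)--(g), using Proposition~\ref{prop:form_decomposition}, Theorem~\ref{thm:realization}, the lemmas of Section~\ref{sec:crystal_forms}, and Proposition~\ref{prop:projective_invariants}(iii). Since the rank of $q$ is $7\equiv 3 \bmod 4$, the projective class of $q$ is determined by the rescaled invariants $\delta_p(q):=(d(q),-1)_p\varepsilon_p(q)$. Each type must therefore be described by a set of primes on which $\delta_p(q)=1$ is forced.

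For the computational part, we run through the 1060 flat 5-manifolds in the CARAT tables, decompose each holonomy representation over $\Q$, and record $\mathcal{C}$. This should give the ten lists: those with three or more odd-dimensional families, namely $(1^5)$, $(3,1,1)$ and $(2\text{-}i,1,1,1)$, together with $(2\text{-}1,2\text{-}1,1)$, $(2\text{-}2,2\text{-}2,1)$, $(2\text{-}1,2\text{-}2,1)$, $(4\text{-}1',1)$, $(4\text{-}2',1)$, $(4\text{-}3',1)$, and possibly $(4\text{-}1,1)$ or $(4\text{-}2,1)$. Every list with at least three odd-dimensional families is type (a) by \cite[Theorem~7.1]{McCoySell2024}. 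The remaining lists are handled one at a time.

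For each remaining list, the forward direction writes $q=f_1\oplus\dots\oplus f_k\oplus\langle a\rangle\oplus\langle1,-1\rangle$ and computes $\delta_p(q)$ from the invariants prescribed by Lemmas~\ref{lem:dim2analysis} and \ref{lem:dim4analysis}. The key observation is that at the relevant primes every cross Hilbert symbol vanishes, because the discriminants $1$, $3$, $5$ and the needed $-1$ are squares there.
\begin{itemize}
\item For $(4\text{-}3',1)$ and $p\equiv1\bmod5$: $5$ is a square, and since $\varepsilon_p(f)=1$ this gives $\delta_p=1$. This is condition (e).
\item For $(4\text{-}1',1)$ and $p\equiv1,3,5\bmod 8$: here $d(f)=1$, and $\delta_p$ reduces to $\varepsilon_p(f)(-1,-1)_p=1$. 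This is condition (f).
\item For $(4\text{-}2',1)$: the same argument gives condition (g).
\item For $(2\text{-}i,2\text{-}i,1)$ and $(2\text{-}1,2\text{-}2,1)$: the argument is as in the worked example, giving (b), (c) and (d).
\item For $(4\text{-}1,1)$ and $(4\text{-}2,1)$, if they occur: there is no condition. Here $\varepsilon_p(f)$ is fixed, and the free scalar $a$ makes $\delta_p(q)=(a,-1)_p\cdot(\text{fixed})$, which can be arranged arbitrarily. So these collapse to type (a).
\end{itemize}
This collapsing is why ten lists give only seven types.

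The converse is the main obstacle. Given $q$ satisfying the condition, we must build $f_i$ and $a$ whose sum realizes $\delta_p(q)$ at every prime. The plan is to fix $a$ (say $a=1$, or $a=3$ as in the worked example) and then prescribe the free Hasse--Witt invariants $\varepsilon_p(f_i)$ at the unconstrained primes to match $\delta_p(q)$. Existence of such $f_i$ follows from the existence theorem for rational forms: finitely many $-1$'s, the product formula fixing $\varepsilon_2$, and positive definiteness for $\varepsilon_\infty$. Parity must also be checked via Hilbert reciprocity: both sides satisfy $\prod_p\delta_p=1$ once $\infty$ is included, so after matching all odd primes $p=2$ matches automatically. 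For the families 4-1', 4-2' and 4-3' we rely on Lemma~\ref{lem:dim4analysis} giving the full realizable set, not just a necessary condition.

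Finally, we verify that the seven conditions are pairwise distinct by exhibiting forms separating them, so that the partition is genuine. The tables are then populated from the computation.
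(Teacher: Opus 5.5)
Your overall strategy is the paper's. You enumerate the lists $\mathcal{C}$ from the CARAT data and send every list with at least three odd-dimensional families to type (a) via \cite[Theorem~7.1]{McCoySell2024}. You then treat the six remaining lists $(2\text{-}1,2\text{-}1,1)$, $(2\text{-}2,2\text{-}2,1)$, $(2\text{-}1,2\text{-}2,1)$, $(4\text{-}1',1)$, $(4\text{-}2',1)$, $(4\text{-}3',1)$ by computing $(d(q),-1)_p\varepsilon_p(q)$. For the converse you prescribe the free Hasse--Witt invariants of the summands, and reciprocity handles $p=2$. For these lists your argument is correct and is what the paper does; the paper writes out only $(2\text{-}1,2\text{-}2,1)$ explicitly.

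However, your paragraph on $(4\text{-}1,1)$ and $(4\text{-}2,1)$ is wrong, and it contradicts your own computation for $(4\text{-}1',1)$. If $q=f\oplus\langle a\rangle\oplus\langle 1,-1\rangle$ with $d(f)=1$, then $d(q)=-a$ and $\varepsilon_p(q)=\varepsilon_p(f)(a,-1)_p$, so
\[
(d(q),-1)_p\varepsilon_p(q)=(-a,-1)_p(a,-1)_p\varepsilon_p(f)=(-1,-1)_p\varepsilon_p(f).
\]
The factor $(a,-1)_p$ you kept is cancelled by the $a$ hidden in $d(q)$. Equivalently, the rank-5 form $f\oplus\langle a\rangle$ has $\varepsilon_p=\varepsilon_p(f)(d(f),a)_p=\varepsilon_p(f)$.

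For the families 4-1 and 4-2, Lemma~\ref{lem:dim4analysis} fixes $\varepsilon_p(f)$ at every prime. So by Proposition~\ref{prop:projective_invariants}(iv) such a list would determine a single projective class. It would give flat 5-manifolds with the UCC property, not type (a), and both the seven-type statement and the first part of Theorem~\ref{thm:UCC_analysis} would fail.

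The theorem holds only because the enumeration shows that $(4\text{-}1,1)$ and $(4\text{-}2,1)$ do not occur for flat 5-manifolds (compare Table~\ref{tab:dim5}). Your proof must rest on that computational fact, not on a ``collapse'' into type (a). Relatedly, the reduction from ten lists to seven types comes entirely from the four lists $(1,1,1,1,1)$, $(2\text{-}1,1,1,1)$, $(2\text{-}2,1,1,1)$, $(3,1,1)$ all being type (a). Those four plus your six are already the paper's ten lists, so $(4\text{-}1,1)$ or $(4\text{-}2,1)$ would make eleven or twelve, not explain the count.
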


\begin{table}[ht]
    \centering
\begin{tabular}{|c | c | c | c|}
\hline
Type & $\mathcal{C}$ &  \# Manifolds & \# Orientable \\
\hline
\eqref{it:dim5none} & \begin{tabular}{c}
     1,1,1,1,1\\ 2-1,1,1,1 \\ 2-2,1,1,1\\ 3,1,1
\end{tabular}& 1023 & 153 \\
\hline
\eqref{it:dim5 2-2,2-2}& 2-2,2-2,1 & 12 & 8 \\
\hline
\eqref{it:dim5 2-1,2-1}& 2-1,2-1,1 & 18 & 6 \\
\hline
\eqref{it:dim5 2-1,2-2}& 2-1,2-2,1 & 3 & 3 \\
\hline
\eqref{it:dim5 4-3p}& 4-3',1 & 2 & 2 \\
\hline
\eqref{it:dim5 4-1p}& 4-1',1 & 1 & 1 \\
\hline
\eqref{it:dim5 4-2p}& 4-2',1 & 1 & 1 \\
\hline
\hline
 Total & & 1060 & 174\\
 \hline
\end{tabular}
\caption{Distribution of flat 5-manifolds among the different types.}
\label{tab:dim5stats}
\end{table}

\begin{itemize}
    \item Every flat 5-manifold arises as a cusp cross-section in the arithmetic commensurability class defined by the form
    \[
        q_0(x)=x_1^2+ x_2^2+x_3^2+x_4^2+x_5^2 +x_6^2-x_7^2
    \]
    since $q_0$ satisfies $(d(q_0),-1)_p\varepsilon_p(q_0)=(-1,-1)_p$ for all $p$. In particular $(d(q_0),-1)_p\varepsilon_p(q_0)=1$ for every odd $p$. 
    \item Every flat 5-manifold arises as a cusp cross-section in infinitely many distinct arithmetic commensurability classes. This is because there are infinitely many distinct projective equivalence classes of quadratic forms satisfying the seven conditions. For example, we may consider
    \[
    q_\ell(x)= \ell x_1^2+\ell x_2^2+x_3^2+x_4^2+x_5^2 +x_6^2-x_7^2,
    \]
    for a prime $\ell\equiv 119\bmod 120$. For every prime $p$ we have
    \[
    (d(q_\ell),-1)_p\varepsilon_p(q_\ell)=(-1,-1)_p(\ell,\ell)_p=
        \begin{cases}
        -1 & p=\ell\\
        1 & \text{otherwise}
        \end{cases}.
        \]
\end{itemize}

\subsection{Dimension 6}

The commensurability classes of arithmetic hyperbolic 7-manifolds are defined by rational quadratic forms of signature $(7,1)$. The projective classes of such forms are determined by the discriminant $d(q)<0$ and the Hasse-Witt invariants $\varepsilon_p(q)$ at primes such that $d(q)$ is a square in $\Q_p$.

\begin{theorem}\label{thm:dim6}
The flat 6-manifolds partition into 10 types according to their occurrence as cusp cross-sections in arithmetic commensurability classes. Each manifold belongs to exactly one of the following types, where the condition describes necessary and sufficient conditions for occurrence as a cusp cross-section in the commensurability class defined by $q$:
 \begin{enumerate}[(a)]
 \item\label{it:dim6none} no conditions;
    \item\label{it:dim6 2-1,2-1} $\varepsilon_p(q)=1$ for all $p$ such that $p\equiv 1\bmod 4$ and $d(q)\in \Qpxsquare$;
    \item\label{it:dim6 2-2,2-2} $\varepsilon_p(q)=1$ for all $p$ such that $p\equiv 1\bmod 3$ and $d(q)\in \Qpxsquare$;
    \item\label{it:dim6 2-1,2-2} $\varepsilon_p(q)=1$ for all $p$ such that $p\equiv 1\bmod 12$ and $d(q)\in \Qpxsquare$;
    \item\label{it:dim6 4-1} $\varepsilon_p(q)=(-1,-1)_p$ for all $p$ such that $d(q)\in \Qpxsquare$;
    \item\label{it:dim6 4-3p} $\varepsilon_p(q)=1$ for all $p$ such that $p\equiv 1 \bmod{5}$ and $d(q)\in \Qpxsquare$;
    \item\label{it:dim6 4-1p} $\varepsilon_p(q)=1$ for all $p$ such that $p\equiv 1,3,5 \bmod{8}$ and $d(q)\in \Qpxsquare$;
    \item\label{it:dim6 4-2p} $\varepsilon_p(q)=1$ for all $p$ such that $p\equiv 1,5,7 \bmod{12}$ and $d(q)\in \Qpxsquare$;
    \item\label{it:dim6 4-2} $\varepsilon_p(q)=(-3,-1)_p$ for all $p$ such that $d(q)\in \Qpxsquare$;
    \item\label{it:dim6 2-1,2-1,2-1} $d(q)=-1$ and $\varepsilon_p(q)=1$ for all $p$ such that $d(q)\in \Qpxsquare$.
\end{enumerate}
The number of manifolds of each type is listed in Table~\ref{tab:dim6stats} with a breakdown of the distribution by holonomy group in Table~\ref{tab:dim6}.
\end{theorem}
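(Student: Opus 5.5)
The plan follows the procedure of the preceding sections. First, for each of the 38746 flat $6$-manifolds in the CARAT tables \cite{LutowskiCARAT}, I would read off $\rho_{\rm hol}$ and compute its decomposition into $\Q$-irreducible subrepresentations, recording the unordered list $\mathcal C$ of crystal families. By Proposition~\ref{prop:form_decomposition} and Theorem~\ref{thm:realization}, the cusp-occurrence of $B$ depends only on $\mathcal C$. So the theorem reduces to checking that the 17 lists that actually occur each yield exactly one of the conditions (a)--(j), and then tabulating the counts.

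The lists containing at least three odd-dimensional families give type~\eqref{it:dim6none} by \cite[Theorem~7.1]{McCoySell2024}. For the remaining lists, I would run the two-direction argument illustrated in the examples.

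\emph{Necessity.} Write $q\cong f_1\oplus\dots\oplus f_k\oplus\langle1,-1\rangle$ with each $f_i$ constrained by Lemmas~\ref{lem:dim2analysis}--\ref{lem:dim5analysis}. Expand $\varepsilon_p(q)$ as a product of the $\varepsilon_p(f_i)$ and the cross Hilbert symbols $(d(f_i),d(f_j))_p$. At primes where $d(q)\in\Qpxsquare$ and the relevant congruence holds (so that $-1$, $3$ or $5$ are local squares as needed), the cross terms collapse, giving the stated value of $\varepsilon_p(q)$.

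\emph{Sufficiency.} Given $q$ of signature $(7,1)$ satisfying the condition, I would build $q'$ of the required shape with $d(q')=d(q)$. The prescribed Hasse--Witt invariants at the remaining primes are supplied by choosing the $f_i$ through the existence part of the same lemmas, together with a free rank-one or rank-two summand $\langle a\rangle$ or $\langle 1,d\rangle$. Because $\varepsilon_p$ is prescribable at odd primes subject to reciprocity, the value at $p=2$ takes care of itself. Proposition~\ref{prop:projective_invariants}(i) then gives projective equivalence.

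The expected matching is as follows. Lists with exactly two odd families together with $2\text{-}1$, $2\text{-}2$, $4\text{-}1'$, $4\text{-}2'$ or $4\text{-}3'$ give types (b), (c), (f), (g) and (h). The list $(2\text{-}1,2\text{-}2,1,1)$ gives (d). The lists $(4\text{-}1,1,1)$ and $(3,3)$ should give (e). The list $(4\text{-}2,1,1)$ gives (i). The lists $(4\text{-}1,2\text{-}1)$ and $(2\text{-}1,2\text{-}1,2\text{-}1)$, which have no odd summands, force $d(q)=-1$ and give (j). Lists such as $(2\text{-}1,2\text{-}1,1,1)$ and $(2\text{-}2,2\text{-}2,1,1)$ collapse to (b) and (c), since the free odd summands absorb the extra constraints; I would check each such coincidence individually. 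Finally, I would verify that the ten conditions are pairwise distinct, so that "exactly one type" is meaningful. This is done by exhibiting, for each pair of types, a form $q$ that satisfies one condition but not the other, using diagonal forms $\langle \ell,\ell,1,\dots,1,-1\rangle$ with suitably chosen primes $\ell$.

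The main obstacle is the sufficiency direction for the lists involving $4\text{-}1'$, $4\text{-}2'$, $4\text{-}3'$ and $3$. In these cases the admissible $f_i$ have constrained discriminants, and the freedom left in the odd summands must be enough to match both $d(q)$ and $\varepsilon_p(q)$ at every prime with $d(q)\in\Qpxsquare$. I would first try to put all the discriminant adjustment into a single summand $\langle1,d\rangle$, and then tune $\varepsilon_p$ of the four-dimensional piece only at primes outside its forced congruence classes, as in the $(2\text{-}1,2\text{-}2,1)$ example. The bookkeeping of the manifold counts per type, producing Tables~\ref{tab:dim6stats} and \ref{tab:dim6}, is purely computational.
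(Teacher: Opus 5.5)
Your overall strategy is the paper's: reduce to the list $\mathcal C$, dispose of lists with at least three odd-dimensional summands by \cite[Theorem~7.1]{McCoySell2024}, and treat the remaining lists by hand in both directions. However, the concrete case analysis you commit to is wrong at type (e). The list $(3,3)$ does not occur among flat $6$-manifolds, and if it did it would not give (e). By Lemma~\ref{lem:dim3analysis} each summand is rationally equivalent to some $\langle m_i,m_i,m_i\rangle$. Then $q=\langle m_1,m_1,m_1,m_2,m_2,m_2,1,-1\rangle$ has $\varepsilon_p(q)=(m_1,m_2)_p$, which equals $1$ whenever $d(q)=-m_1m_2\in\Qpxsquare$. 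This contradicts (e) at $p=2$ whenever $d(q)\in(\Q_2^\times)^2$ (e.g.\ $m_1=1$, $m_2=7$), so $(3,3)$ would be an eleventh type. What you are actually missing is the list $(5\text{-}1,1)$, which comes from the eight manifolds with holonomy $C_2^4\rtimes C_5$ and $C_2\times(C_2^4\rtimes C_5)$. It needs Lemma~\ref{lem:dim5analysis}(a). With $f\cong\langle m,m,m,m,m\rangle\oplus\langle a\rangle$ one gets $\varepsilon_p(q)=(m,a)_p(ma,-1)_p=(-1,-1)_p$ whenever $-ma\in\Qpxsquare$. Conversely, with $d=-d(q)$, the form $\langle 1,1,1,1,1,d,1,-1\rangle$ has $\varepsilon_p=(d,-1)_p$, and so realizes every $q$ satisfying (e). (Also, the correct matching is $4\text{-}3'\mapsto$ (f), $4\text{-}1'\mapsto$ (g), $4\text{-}2'\mapsto$ (h).)

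Your distinctness check, which the paper leaves implicit, also fails as stated. The form $\langle\ell,\ell,1,\dots,1,-1\rangle$ has discriminant $-1$ and $\varepsilon_p=(\ell,-1)_p$, which is $1$ at every $p\equiv 1\bmod 4$, i.e.\ at every prime where $-1$ is a square. So all these forms are projectively equivalent to $q_0$, which satisfies all ten conditions, and they separate nothing. To distinguish types you need forms whose discriminant is a local square at the primes where the conditions differ. For instance, (e) and (i) differ only at $p=2$ and $p=3$, so you need $d(q)\in(\Q_2^\times)^2$ or $d(q)\in(\Q_3^\times)^2$.

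Finally, in rank $8$ the value at $p=2$ does not take care of itself for $q'=f\oplus\langle 1,d\rangle\oplus\langle 1,-1\rangle$. Such a $q'$ generally cannot match $\varepsilon_p(q)$ at odd primes where $d(q)$ is not a local square; for $4\text{-}1'$, $\varepsilon_p(q')$ is forced to be $1$ at $p\equiv 1\bmod 8$. So reciprocity gives nothing, and when $d(q)\in(\Q_2^\times)^2$ you must match $\varepsilon_2$ directly. One way is to fix the parity of the number of primes $\equiv 7\bmod 8$ in the appendix construction, using an auxiliary such prime at which $d(q)$ is a non-square.
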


\begin{longtable}{|c|c | c | c|}
\hline
Type & $\mathcal{C}$ &  \# Manifolds & \# Orientable \\
\hline
\eqref{it:dim6none} &\begin{tabular}{c}
     1,1,1,1,1,1\\ 2-1,1,1,1,1 \\ 2-2,1,1,1,1\\3,1,1,1
\end{tabular} & 37216 & 2868 \\
\hline
\eqref{it:dim6 2-1,2-1}& \begin{tabular}{c}
     2-1,2-1,1,1\\ 3,2-1,1
\end{tabular} & 1088 & 255 \\
\hline
\eqref{it:dim6 2-2,2-2}& \begin{tabular}{c}
     2-2,2-2,1,1\\ 3,2-2,1
\end{tabular} & 199 & 106 \\
\hline
\eqref{it:dim6 2-1,2-2}&2-1,2-2,1,1 & 160 & 75 \\
\hline
\eqref{it:dim6 4-1}& \begin{tabular}{c}
     4-1,1,1\\ 5-1,1
\end{tabular}& 34 & 4 \\
\hline
\eqref{it:dim6 4-3p}&4-3',1,1 & 11 & 3 \\
\hline
\eqref{it:dim6 4-1p}& 4-1',1,1 & 14 & 2 \\
\hline
\eqref{it:dim6 4-2p}& 4-2',1,1 & 4 & 1 \\
\hline
\eqref{it:dim6 4-2}& 4-2,1,1 & 12 & 0 \\
\hline
\eqref{it:dim6 2-1,2-1,2-1}&\begin{tabular}{c}
     2-1,2-1,2-1\\ 4-1,2-1
\end{tabular}  & 8 & 0 \\
\hline
\hline
 Total & & 38746 & 3314\\
 \hline
\caption{Distribution of flat 6-manifolds among the different types.}
\label{tab:dim6stats}
\end{longtable}

\begin{itemize}
    \item Every flat 6-manifold arises as a cusp cross-section in the arithmetic commensurability class defined by the form
    \[
        q_0(x)=x_1^2+ x_2^2+x_3^2+x_4^2+x_5^2 +x_6^2+x_7^2-x_8^2
    \]
    since $q_0$ satisfies $d(q_0)=-1$ and $\varepsilon_p(q_0)=1$ for all $p$. Note that this is consistent with types \eqref{it:dim6 4-1} and \eqref{it:dim6 4-2}, since if $d(q_0)=-1$ is a square in $\Q_p$, then $p\equiv 1\bmod 4$ implying that $(-1,-1)_p=(-3,-1)_p=1$. 
    \item The flat 6-manifolds which are not of type \eqref{it:dim6 2-1,2-1,2-1} all arise as cusp cross-sections in infinitely many distinct arithmetic commensurability classes. This is because there are infinitely many distinct projective equivalence classes of quadratic forms satisfying the remaining nine conditions. This can be seen by considering
    \[
    q_\ell(x)= \ell x_1^2+x_2^2+x_3^2+x_4^2+x_5^2+x_6^2+x_7^2-x_8^2,
    \]
    for $\ell\equiv 1\bmod 12$ a prime. We have $d(q_\ell)=-\ell$ and, since $\ell\equiv 1\bmod 4$, that $\varepsilon_p(q_\ell)=1$ for all $p$. This immediately shows that $q_\ell$ satisfies all the conditions except possibly \eqref{it:dim6 4-1} and \eqref{it:dim6 4-2}. To see that it also satisfies \eqref{it:dim6 4-1} and \eqref{it:dim6 4-2}, observe that $d(q_\ell)=-\ell\equiv 2 \bmod 3$ and $d(q_\ell)=-\ell\equiv 3 \bmod 4$. This implies that $d(q_\ell)$ is not a square in $\Q_3$ or $\Q_2$. Thus if $d(q_\ell)\in \Qpxsquare$, then $(-1,-1)_p=(-3,-1)_p=1$. Thus $q_\ell$ is also consistent with \eqref{it:dim6 4-1} and \eqref{it:dim6 4-2}.
    \item The eight flat manifolds of type \eqref{it:dim6 2-1,2-1,2-1} have the UCC property. By Proposition~\ref{prop:projective_invariants}, the conditions of type \eqref{it:dim6 2-1,2-1,2-1} specify a unique projective equivalence class of quadratic forms and hence determine a unique commensurability class of arithmetic manifolds. Note that there are no orientable manifolds of type \eqref{it:dim6 2-1,2-1,2-1}. 
    \item A flat 6-manifold $C$ with the UCC property was constructed in \cite[Lemma~4.2]{McCoySell2025}. By construction, $C$ has holonomy group of order 16. It follows from Table~\ref{tab:dim6} that $C$ corresponds to one of the four flat manifolds of type~\eqref{it:dim6 2-1,2-1,2-1} with holonomy group $H\cong(C_4 \times C_2) \rtimes C_2$.  
\end{itemize}

\subsection{Proofs of the main theorems}
We conclude this section by collecting the preceding results to prove Theorems~\ref{thm:UCC_analysis} and~\ref{thm:universal_class}.

\begin{proof}[Proof of Theorem~\ref{thm:universal_class}]
Let $B$ be a compact flat $n$-manifold, where $3 \leq n \leq 6$, and consider the quadratic form
\[
q_0(x)=x_1^2+\cdots+x_{n+1}^2-x_{n+2}^2.
\]
We claim that $B$ occurs as a cusp cross-section in the arithmetic commensurability class defined by $q_0$.

For $n=3$, the form $q_0$ satisfies
\[
\varepsilon_p(q_0)=1
\]
for every prime $p$, and hence satisfies each of the conditions in Theorem~\ref{thm:dim3}. For $n=4$, we have
\[
d(q_0)=-1
\qquad\text{and}\qquad
\varepsilon_p(q_0)=1
\]
for every prime $p$, so $q_0$ satisfies each of the conditions in Theorem~\ref{thm:dim4}.

For $n=5$, we have
\[
(d(q_0),-1)_p\varepsilon_p(q_0)=(-1,-1)_p.
\]
In particular,
\[
(d(q_0),-1)_p\varepsilon_p(q_0)=1
\]
for every odd prime $p$, and consequently $q_0$ satisfies each of the conditions in Theorem~\ref{thm:dim5}.

Finally, suppose that $n=6$. Again
\[
d(q_0)=-1
\qquad\text{and}\qquad
\varepsilon_p(q_0)=1
\]
for every prime $p$. Thus $q_0$ clearly satisfies the conditions for all of the types in Theorem~\ref{thm:dim6} with the possible exceptions of \eqref{it:dim6 4-1} and \eqref{it:dim6 4-2}. For types~\eqref{it:dim6 4-1} and~\eqref{it:dim6 4-2}, observe that if $d(q_0)=-1$ is a square in $\mathbb{Q}_p$, then $p\equiv 1\bmod 4$, and hence
\[
(-1,-1)_p=(-3,-1)_p=1=\varepsilon_p(q_0).
\]
Thus $q_0$ also satisfies the conditions of types~\eqref{it:dim6 4-1} and~\eqref{it:dim6 4-2}.

It follows from Theorems~\ref{thm:dim3}--\ref{thm:dim6} that every compact flat $n$-manifold, for $3\leq n\leq 6$, occurs as a cusp cross-section in the commensurability class defined by $q_0$.
\end{proof}

\begin{proof}[Proof of Theorem~\ref{thm:UCC_analysis}]
For dimensions $n=3,4,5$, Theorems~\ref{thm:dim3}--\ref{thm:dim5} give the complete list of conditions imposed on the projective equivalence class of a quadratic form by a flat $n$-manifold. In each case, the families of quadratic forms exhibited above show that every one of these conditions is satisfied by infinitely many distinct projective equivalence classes. Consequently every flat $n$-manifold, for $n=3,4,5$, occurs as a cusp cross-section in infinitely many distinct commensurability classes of arithmetic hyperbolic $(n+1)$-manifolds.

Now suppose that $n=6$. By Theorem~\ref{thm:dim6}, every flat $6$-manifold is of one of the types \eqref{it:dim6none}--\eqref{it:dim6 2-1,2-1,2-1}. For each of the types \eqref{it:dim6none}--\eqref{it:dim6 4-2}, the forms
\[
q_\ell(x)
=
\ell x_1^2+x_2^2+\cdots+x_7^2-x_8^2,
\]
where $\ell\equiv 1\bmod{12}$ is prime, satisfy the corresponding condition. Since
\[
d(q_\ell)=-\ell,
\]
varying $\ell$ gives infinitely many distinct projective equivalence classes. Hence every flat $6$-manifold of type \eqref{it:dim6none}--\eqref{it:dim6 4-2} occurs as a cusp cross-section in infinitely many distinct arithmetic commensurability classes.

It remains to consider type \eqref{it:dim6 2-1,2-1,2-1}. In this case the conditions are
\[
d(q)=-1
\qquad\text{and}\qquad
\varepsilon_p(q)=1
\quad\text{whenever}\quad
-1\in(\mathbb{Q}_p^\times)^2.
\]
By Proposition~\ref{prop:projective_invariants}, these conditions determine a unique projective equivalence class of rational quadratic forms of signature $(7,1)$. Thus every flat $6$-manifold of type \eqref{it:dim6 2-1,2-1,2-1} has the UCC property. By Table~\ref{tab:dim6stats} there are exactly eight such manifolds, and none of them are
orientable.

Therefore, with precisely eight non-orientable exceptions, every flat $6$-manifold occurs as a cusp cross-section in infinitely many commensurability classes of arithmetic hyperbolic $7$-manifolds, and the eight exceptional manifolds have the UCC property.
\end{proof}

\bibliographystyle{alpha}
\bibliography{bib}

\appendix

\section{Crystal families: $\R$-irreducibles}\label{sec:R_irred}
For each of the crystal families that contain $\R$-irreducible representations, we classify the positive definite quadratic forms whose orthogonal groups contain representations of that crystal family. Since an $\R$-irreducible representation admits a 1-dimensional space of invariant forms, there will be a unique projective class of such forms. Thus for each crystal family it suffices to take the projective equivalence of any positive-definite rational quadratic form invariant under some representation in that crystal family.
\subsection{The family 3}
The crystal family $3$ is the family containing any 3-dimensional $\Q$-irreducible representation. One example of such a representation is has image generated by the matrices
\[\text{
$A=
\begin{pmatrix}
0 & 0 & 1\\
1 & 0 & 0\\
0 & 1 & 0
\end{pmatrix}$
and
$B=
\begin{pmatrix}
1 & 0 & 0\\
0 & -1 & 0\\
0 & 0 & -1
\end{pmatrix}.$}
\]
Since these matrices are orthogonal and the representation is $\R$-irreducible, a symmetric matrix $M$ satisfies
\[
A^T M A = B^T M B =M
\]
if and only if it takes the form
\[
M=\begin{pmatrix}
        a&0 &0\\
        0&a&0\\
        0& 0&a
    \end{pmatrix}
\]
for some $a\in\Q$.
Lemma~\ref{lem:dim3analysis} follows from this calculation.

\subsection{The family 4-1}
The family $4-1$ is the family containing the faithful rational representation of the quaternion group $Q_8$. One example of such a representation one generated by the matrices
 \[
    A=\begin{pmatrix}
        0&-1 &0&0\\
        1&0&0&0\\
        0& 0&0&-1\\
        0& 0&1 &0
    \end{pmatrix}\quad\text{and}\quad
    B=\begin{pmatrix}
        0&0 &1&0\\
        0&0&0&-1\\
        -1& 0&0&0\\
        0& 1 &0&0
    \end{pmatrix}.
    \]
Since these matrices are orthogonal and the representation is $\R$-irreducible, a symmetric rational matrix $M$ satisfies
\[
A^T M A = B^T M B =M
\]
if and only if it takes the form
\[
M=\begin{pmatrix}
        a&0 &0&0\\
        0&a&0&0\\
        0& 0&a&0\\
        0 &0&0&a
    \end{pmatrix}
\]
for some $a\in \Q$.
Lemma~\ref{lem:dim4analysis}\eqref{it:4-1} follows from this calculation.

\subsection{The family 4-2}
This is the family containing an irreducible rational representation of $C_3\rtimes C_4$. 
There is such a representation of $C_3\rtimes C_4$ whose image is generated by the matrices
\[\text{$A=
\begin{pmatrix}
0&-1&0&0\\
1&-1&0&0\\
0&0&-1&1\\
0&0&-1&0
 \end{pmatrix}$ and $B=
 \begin{pmatrix}
0&0&-1&0\\
0&0&0&-1\\
1&0&0&0\\
0&1&0&0
 \end{pmatrix}$}.
 \]
One can check that a rational symmetric matrix $M$ satisfies $A^TMA=B^TMB=M$ if and only if it takes the form
\[
M=
\begin{pmatrix}
    2a&a&0&0\\ a& 2a& 0& 0\\ 0& 0& 2a& a\\ 0& 0 &a &2a
\end{pmatrix}
\]
for some $a\in \Q$. Lemma~\ref{lem:dim4analysis}\eqref{it:4-2} follows since it is easy to check that a positive definite form is equivalent to one represented by such a matrix if and only if it satisfies $d(f)=1$ and $\varepsilon_p(f)=(3,3)_p$ for all $p$. In particular, a form represented by such a matrix $M$ can be diagonalized to a form represented by a matrix
\[
M'=
\begin{pmatrix}
    3a&0&0&0\\ 0& a& 0& 0\\ 0& 0& 3a& 0\\ 0& 0 &0 &a
\end{pmatrix}.
\]
\subsection{The family 4-3}
This is the crystal family containing an irreducible 4-dimensional representation of $A_5$. There is such a representation with image generated by the matrices 
\[\text{
$A_1=
\begin{pmatrix}
-1 & -1 & -1 & -1\\
1  & 0  & 0  & 0\\
0  & 1  & 0  & 0\\
0  & 0  & 1  & 0
\end{pmatrix}$ 
and
$A_2=
\begin{pmatrix}
0 & 1 & 0 & 0\\
1 & 0 & 0 & 0\\
0 & 0 & 0 & 1\\
0 & 0 & 1 & 0
\end{pmatrix}$.}
\]
One can check that a rational symmetric matrix $M$ satisfies $(A_1)^TMA_1=(A_2)^TMA_2=M$ if and only if it takes the form
\[
M=
\begin{pmatrix}
2a & a & a & a\\
a & 2a & a & a\\
a & a & 2a & a\\
a & a & a & 2a
\end{pmatrix}
\]
for some $a\in \Q$. Lemma~\ref{lem:dim4analysis}\eqref{it:4-3} follows since it is easy to check that a positive definite form is equivalent to one represented by such a matrix if and only if it satisfies $d(f)=5$ and $\varepsilon_p(f)=1$ for all $p$ such that $5\in \Qpxsquare$. However 5 is a square in $\Q_p$ if and only if $p\equiv 1,4\bmod 5$.
\subsection{The family 5-1}
This is the crystal family containing a faithful 5-dimensional representation of $C_2^5 \rtimes S_5$. Such a representation can clearly be chosen to be orthogonal and since it is $\R$-irreducible, we see that every form which is invariant under such a representation satisfies $\varepsilon_p(f)=1$ for all $p$. This proves Lemma~\ref{lem:dim5analysis}\eqref{it:5-1}.

\subsection{The family 5-2}
This is the crystal family containing the irreducible 5-dimensional representation of $A_6$. There is such a representation with image generated by the matrices
\[
\text{
$A=
\begin{pmatrix}
0 & 0 & 1 & 0 & 0\\
1 & 0 & 0 & 0 & 0\\
0 & 1 & 0 & 0 & 0\\
0 & 0 & 0 & 1 & 0\\
0 & 0 & 0 & 0 & 1
\end{pmatrix}$
and
$B=
\begin{pmatrix}
1 & 0 & 0 & 0 & 0\\
-1 & -1 & -1 & -1 & -1\\
0 & 1 & 0 & 0 & 0\\
0 & 0 & 1 & 0 & 0\\
0 & 0 & 0 & 1 & 0
\end{pmatrix}$,}
\]
where $A$ is the image of the 3-cycle $(123)$ and $B$ is the image of the 5-cycle $(23456)$.

One can check that a rational symmetric matrix $M$ satisfies $A^TMA=B^TMB=M$ if and only if it takes the form
\[
M=
\begin{pmatrix}
2a & a & a & a & a\\
a & 2a & a & a & a\\
a & a & 2a & a & a\\
a & a & a & 2a & a\\
a & a & a & a & 2a
\end{pmatrix},
\]
for some $a\in \Q$.
The projective equivalence class of this form is characterized by the fact that $\varepsilon_p(f)=(3,3)_p$ for all primes $p$. This proves Lemma~\ref{lem:dim5analysis}\eqref{it:5-2}.

\section{Crystal families: cyclic groups}\label{sec:R_red}
We now consider each of the crystal families 4-1', 4-2' and 4-3' that do not contain $\R$-irreducible representations. Each of these families contains representations of cyclic groups, so we will be able to make use of the following result.
\begin{prop}[{\cite[Proposition~3.1]{McCoySell2025}}]\label{prop:cyclic_rep_calc}
    Let 
    \[\sigma : \Z/n \rightarrow O(f;\Q)\]
    be a faithful irreducible representation, where $f$ is a positive definite rational quadratic form and $n>2$. Then
    \begin{enumerate}[(i)]
        \item $f$ has rank $\varphi(n)$
        \item the discriminant of $f$ satisfies \[
    d(f)=\begin{cases}
        p &\text{if $n=p^r$ or $n=2p^r$ for $r\geq 1$ and $p$ an odd prime}\\
        1 &\text{otherwise.}
    \end{cases}
    \]
    \item If there exists an integer $m\geq 1$ such that $-m$ is a square in both the cyclotomic field $\Q[\zeta_n]$ and the $q$-adic field $\Q_q$ for some prime $q$, then the Hasse-Witt invariant satisfies
    \[\varepsilon_q(f)=\begin{cases} 1&q>2,\\
(-1)^{\frac{\varphi(n)(\varphi(n)-2)}{8}} &q=2.
\end{cases}
\]
\qed
\end{enumerate}  
\end{prop}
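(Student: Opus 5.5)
The plan is to model the representation on the cyclotomic field and compute everything from an explicit description of the invariant forms. A faithful $\Q$-irreducible representation of $\Z/n$ is, up to conjugacy, the action of $\zeta_n$ by multiplication on $K=\Q[\zeta_n]$, regarded as a $\Q$-vector space. This immediately gives (i): the rank is $[K:\Q]=\varphi(n)$. Let $x\mapsto \bar x$ be complex conjugation and $K^+$ the maximal real subfield. The invariant symmetric bilinear forms are exactly
\[
b_\lambda(x,y)=\mathrm{Tr}_{K/\Q}(\lambda x\bar y), \qquad \lambda\in K^+ .
\]
This is because a $\zeta_n$-invariant form on a one-dimensional $K$-space is the trace of a hermitian form $\lambda x\bar y$. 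Positive definiteness forces $\lambda$ to be totally positive. So $f$ is equivalent to such a trace form.

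For (ii), the Gram determinant of $b_\lambda$ equals $N_{K/\Q}(\lambda)\cdot \mathrm{disc}(K)$ up to squares. Since $\lambda\in K^+$, we have $N_{K/\Q}(\lambda)=N_{K^+/\Q}(\lambda)^2$, so $d(f)\equiv \mathrm{disc}(K)$ modulo squares. I then use the standard formula
\[
\mathrm{disc}(K)=\pm\, n^{\varphi(n)}\Big/\prod_{p\mid n}p^{\varphi(n)/(p-1)} .
\]
Because $\varphi(n)$ is even for $n>2$, the factor $n^{\varphi(n)}$ is a square. The exponent $\varphi(n)/(p-1)$ is odd precisely when $n=p^r$ or $n=2p^r$ with $p$ odd, and in that case it equals $p^{r-1}$. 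Positive definiteness gives $d(f)>0$, which fixes the sign, and this yields (ii).

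For (iii), let $\iota\in K$ with $\iota^2=-m$. Then $\bar\iota=-\iota$, since $\bar\iota$ is also a square root of $-m$ and $\iota$ is not real. Let $T$ denote multiplication by $\iota$; it is $\Q$-linear and satisfies:
\begin{itemize}
\item $b(Tx,Ty)=m\,b(x,y)$, since $\iota\bar\iota=m$;
\item $b(Tx,y)=-b(x,Ty)$, so in particular $b(x,Tx)=0$.
\end{itemize}
Next I run Gram--Schmidt on $T$-stable planes. Pick any $x_1\neq 0$; the plane $\langle x_1,Tx_1\rangle$ is $T$-stable and has Gram matrix $\mathrm{diag}(a_1,ma_1)$ with $a_1=f(x_1)$. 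Its orthogonal complement is again $T$-stable by the skew-adjointness of $T$, so induction with $k=\varphi(n)/2$ gives
\[
f\cong\bigoplus_{i=1}^{k}\langle a_i, m a_i\rangle .
\]
Now work over $\Q_q$ with $-m$ a square. Then $\langle a,ma\rangle\cong\langle a,-a\rangle$ is a hyperbolic plane, so $f\otimes\Q_q$ is isometric to $k$ copies of $\langle 1,-1\rangle$. With the convention $\varepsilon=\prod_{i<j}(\alpha_i,\alpha_j)$, one computes
\[
\varepsilon_q(f)=(-1,-1)_q^{\,k(k-1)/2}.
\]
Since $(-1,-1)_q=1$ for odd $q$ and $-1$ for $q=2$, and $k(k-1)/2=\varphi(n)(\varphi(n)-2)/8$, this is exactly the claimed value.

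I expect the main obstacle to be justifying that every invariant form is a trace form $\mathrm{Tr}(\lambda x\bar y)$, together with the discriminant identity $\det b_\lambda\equiv N(\lambda)\,\mathrm{disc}(K)$. The cleanest route is to identify invariant bilinear forms with $K$-sesquilinear forms via the nondegenerate trace pairing. The discriminant bookkeeping in (ii), in particular the parity of the exponents, is routine but needs care at $p=2$.
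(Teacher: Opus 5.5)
The paper gives no proof of this proposition. It is quoted from \cite[Proposition~3.1]{McCoySell2025} and stated without proof, so there is no in-paper argument to match against. Your proposal is a correct, self-contained proof of all three parts.

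The closest the paper comes is its treatment of the family 4-3' in Appendix~\ref{sec:R_red}. There, $\varepsilon_p(f)=1$ for $p\equiv 1\bmod 5$ is read off from the splitting of the representation over $\Q_p$ into one-dimensional pieces. That is a purely local argument: the $\zeta^{j}$- and $\zeta^{-j}$-eigenlines pair off into hyperbolic planes. Your treatment of (iii) is the global counterpart. The skew-adjoint operator $T$ with $T^2=-m$ gives an explicit decomposition $f\cong\bigoplus\langle a_i,ma_i\rangle$ already over $\Q$, and you localize only at the very end. This buys more explicitness, and it yields $d(f)\equiv m^{\varphi(n)/2}$ as a consistency check against (ii). The cost is that it needs an imaginary quadratic subfield of $\Q[\zeta_n]$. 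So, unlike the local splitting argument, it cannot handle situations such as $n=5$, $p\equiv 1\bmod 5$, although those lie outside the statement anyway.

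There are two small points to tidy up.

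First, the step you flag as the main obstacle is short. Orthogonality of $\zeta$ gives $b(\alpha x,y)=b(x,\bar\alpha y)$ for all $\alpha\in K=\Q[\zeta]$. Hence $b(x,y)=b(x\bar y,1)=\mathrm{Tr}_{K/\Q}(\lambda x\bar y)$ for a unique $\lambda$, by nondegeneracy of the trace pairing. Symmetry of $b$ then forces $\bar\lambda=\lambda$. For (iii) you do not even need the trace description, only this adjoint relation.

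Second, in (ii) the Gram determinant of $b_\lambda$ is not $N_{K/\Q}(\lambda)\,\mathrm{disc}(K)$ modulo squares. It is $(-1)^{\varphi(n)/2}N_{K/\Q}(\lambda)\,\mathrm{disc}(K)$ times a square, where the sign comes from the permutation $\tau\mapsto\tau\circ c$ of the embeddings. Since $\mathrm{disc}(K)$ has sign $(-1)^{\varphi(n)/2}$, this quantity is automatically positive. Your appeal to positive definiteness therefore lands on the right value, but the identity as you wrote it is off by that sign.
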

A key technical ingredient will be the following theorem that characterizes the rationals that can be represented by a given rank two rational quadratic form.
\begin{theorem}[{\cite{Serre_arithmetic}}]\label{thm:rank2_realization}
    Let $Q$ be a non-degenerate 2-dimensional rational quadratic form. Then for $c\in \Q^\times$, the equation $Q(x)=c$ has a solution if and only if
    \[
    (c, -d(Q))_p=\varepsilon_p(Q)
    \]
    for all valuations.\qed
\end{theorem}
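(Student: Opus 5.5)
The plan is to reduce the global statement to a local one via the Hasse--Minkowski theorem, and then settle the local statement with the classification of quadratic forms over $\Q_p$ and $\R$ by rank, discriminant and Hasse--Witt invariant. Throughout I use the convention that $\varepsilon_v(\langle a_1,\dots,a_n\rangle)=\prod_{i<j}(a_i,a_j)_v$, consistent with the computations earlier in the paper.

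\emph{Step 1: reduction to local statements.} Since $Q$ is non-degenerate and $c\neq 0$, $Q$ represents $c$ over a field $K$ of characteristic $0$ if and only if the ternary form $Q\oplus\langle -c\rangle$ is isotropic over $K$. One direction is immediate. For the converse, take a nontrivial zero $(x,z)$. If $z\neq 0$ then $Q(x/z)=c$. If $z=0$ then $Q$ is isotropic, hence a hyperbolic plane, and so universal. By Hasse--Minkowski, $Q\oplus\langle -c\rangle$ is isotropic over $\Q$ if and only if it is isotropic over every $\Q_v$, including $v=\infty$. Hence $Q$ represents $c$ over $\Q$ if and only if it represents $c$ over every completion. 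It therefore suffices to prove that, over each $\Q_v$, $Q$ represents $c$ if and only if $(c,-d(Q))_v=\varepsilon_v(Q)$.

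\emph{Step 2: the local criterion.} Fix $v$ and write $d=d(Q)$. If $Q$ represents $c$ over $\Q_v$, choose a vector $e$ with $Q(e)=c$ and complete it to an orthogonal basis. The complementary coefficient is then forced by the discriminant to be $cd$ modulo squares, so $Q\cong\langle c,cd\rangle$. Its Hasse--Witt invariant is
\[
(c,cd)_v=(c,c)_v(c,d)_v=(c,-1)_v(c,d)_v=(c,-d)_v .
\]
Conversely, suppose $(c,-d)_v=\varepsilon_v(Q)$. The form $\langle c,cd\rangle$ has the same rank $2$, the same discriminant $d$, and, by the computation above, the same Hasse--Witt invariant as $Q$. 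For finite $p$, rank, discriminant and Hasse--Witt invariant classify forms over $\Q_p$, so $Q\cong\langle c,cd\rangle$ and $Q$ represents $c$.

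\emph{Step 3: the real place, the expected obstacle.} At $v=\infty$ the classification is by signature, and one must check that for binary forms the signature is recovered from $d$ and $\varepsilon_\infty$. If $d<0$ the signature is $(1,1)$. If $d>0$ the form is definite, and $\varepsilon_\infty=(a,b)_\infty$ equals $-1$ exactly when both coefficients are negative. So $(d,\varepsilon_\infty)$ determines the signature and the argument of Step 2 goes through at $\infty$ as well. This is the point that needs care: it is precisely where the clean invariants-only classification must be supplemented, and it is why the statement is phrased ``for all valuations''. A direct sanity check supports this. For a positive definite $Q$ we have $\varepsilon_\infty=1$ and $-d<0$, so the condition $(c,-d)_\infty=1$ forces $c>0$, as it should.

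Combining Steps 1--3 gives the theorem.
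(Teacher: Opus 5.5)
Your proof is correct. The paper gives no proof of its own and only cites Serre. Serre's argument has the same two-step structure as yours. First, Hasse--Minkowski applied to the ternary form $Q\oplus\langle -c\rangle$ reduces representability of $c$ over $\Q$ to representability over every completion $\Q_v$. Then a local criterion is verified at each place.

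The one real difference is in how the local step is obtained. Serre derives the binary criterion by substituting $g=Q\oplus\langle -c\rangle$ into his isotropy criterion for ternary forms, $(-1,-d(g))_v=\varepsilon_v(g)$. Using $d(g)=-cd$ and $\varepsilon_v(g)=\varepsilon_v(Q)(d,-c)_v$, this simplifies to $\varepsilon_v(Q)=(c,-d)_v$. You instead observe that representing $c$ forces $Q\cong\langle c,cd\rangle$. You then invoke the classification of forms over $\Q_p$ by rank, discriminant and Hasse--Witt invariant, together with a direct signature check at $\infty$, which is correct for binary forms.

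Your version makes the meaning of the condition transparent: it says exactly that $Q$ and $\langle c,cd\rangle$ have the same local invariants. Be aware, though, that in Serre's treatment the classification over $\Q_p$ is itself deduced from this representation criterion. Your route is therefore non-circular only if the classification theorem is taken as an independently established result, which is legitimate when it is treated as a standard black box.
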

\subsection{The family 4-1'}
The family 4-1' is the crystal family containing the faithful rational representation of $C_8$. Let $f$ be a positive definite rational quadratic form such that $O(f;\Q)$ contains the image of a representation in the family 4-1'. From Proposition~\ref{prop:cyclic_rep_calc}, we see that $d(f)=1$.
The imaginary quadratic subfields of $\Q[\zeta_8]$ are $\Q[\sqrt{-1}]$ and $\Q[\sqrt{-2}]$. Thus we have that $\varepsilon_p(f)=1$ if $-2$ or $-1$ is a square in $\Q_p$. However $-1$ is a square in $\Q_p$ if and only if $p\equiv 1\bmod 4$ and $-2$ is a square in $\Q_p$ if and only if $p\equiv 1, 3 \bmod 8$. Thus, Proposition~\ref{prop:cyclic_rep_calc} shows that $\varepsilon_p(f)=1$ whenever $p\equiv 1,3,5\bmod 8$. Thus Proposition~\ref{prop:cyclic_rep_calc} gives all the restrictions on $f$ required by Lemma~\ref{lem:dim4analysis}\eqref{it:4-1'}. It remains to verify that all forms satisfying these restrictions can be realized.

The aim is to show that for any set of distinct primes $q_1,\dots, q_\ell$ such that $q_i\equiv 7 \bmod 8$, there exists a positive definite quadratic form $f$ of rank 4 such that $O(f;\Q)$ contains a matrix of order 8 and we have
\[
\varepsilon_p(f)=\begin{cases}
    -1 &\text{if $p\in \{q_1, \dots, q_\ell\}$}\\
    (-1)^{\ell} &\text{if $p=2$}\\
    1 &\text{otherwise}.
\end{cases}
\]
Let $f$ be a rational quadratic form defined by the matrix 
\[
M=\begin{pmatrix}
    a&b&0&-b\\
    b&a&b&0\\
    0&b&a&b\\
    -b&0&b&a
\end{pmatrix},
\]
where $a,b\in\Q$ are not both zero.

Since the matrix
\[
A=\begin{pmatrix}
    0&0&0&-1\\
    1&0&0&0\\
    0&1&0&0\\
    0&0&1&0
\end{pmatrix}
\]
has order 8 and satisfies $A^T M A =M$, we see that there is a representation of family 4-1' with image in $O(f;\Q)$. 
\begin{claim}
    If $a>0$ and $a^2-2b^2>0$, then $f$ is positive definite and for any prime $p$, we have
    \[
    \varepsilon_p(f)=(a^2-2b^2,-1)_p.
    \]
\end{claim}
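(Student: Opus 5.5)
The plan is to diagonalize $f$ explicitly over $\Q$ and then read off positivity and the Hasse--Witt invariant from the diagonal form. First I will observe that $M=aI+b(A+A^{T})$ with $A$ as above, where $A^T=A^{-1}$. Since $A$ has characteristic polynomial $x^4+1$, the eigenvalues of $A+A^{-1}$ are $2\cos(k\pi/4)$ for $k=1,3,5,7$, namely $\pm\sqrt2$, each with multiplicity two. Hence the eigenvalues of $M$ are $a\pm\sqrt2\, b$. This will serve as a consistency check on the diagonal form below, but the diagonalization itself gives positivity directly.

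For the rational diagonalization I will use Gram--Schmidt in the order $e_1,e_3,e_2,e_4$. From $M$ we have $f(e_1,e_3)=0$ and $f(e_1,e_1)=f(e_3,e_3)=a$. I will then set
\[
e_2'=e_2-\tfrac{b}{a}e_1-\tfrac{b}{a}e_3,\qquad e_4'=e_4+\tfrac{b}{a}e_1-\tfrac{b}{a}e_3 ,
\]
and check that both vectors are orthogonal to $e_1$ and $e_3$. I will also check that
\[
f(e_2',e_2')=f(e_4',e_4')=\frac{a^2-2b^2}{a}.
\]
A short expansion then shows $f(e_2',e_4')=0$, since the eight cross terms cancel in pairs. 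Writing $c=a^2-2b^2$, this gives
\[
f\cong\langle a,\,a,\,c/a,\,c/a\rangle .
\]
If $a>0$ and $c>0$, all four diagonal entries are positive, so $f$ is positive definite.

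For the Hasse--Witt invariant I will use $\varepsilon_p=\prod_{i<j}(a_i,a_j)_p$. The four mixed factors $(a,c/a)_p$ appear to an even power, so they cancel. This leaves
\[
\varepsilon_p(f)=(a,a)_p\,(c/a,c/a)_p=(a,-1)_p\,(c/a,-1)_p=(c,-1)_p ,
\]
using $(x,x)_p=(x,-1)_p$ and bimultiplicativity. As a byproduct, $d(f)=c^2=1$ in $\Qsquare$, which matches Proposition~\ref{prop:cyclic_rep_calc}.

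The only step needing care is verifying $f(e_2',e_4')=0$, which relies on the sign pattern of the $\pm b$ entries in $M$. If it failed, I would instead diagonalize the $2\times2$ block on the complement and compute its invariants via its determinant $c^2/a^2$.
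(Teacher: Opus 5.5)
Your proof is correct, and it reaches the result by a different diagonalization from the paper's.

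\textbf{The paper's route.} It runs Gram--Schmidt in the standard order $e_1,e_2,e_3,e_4$, using the leading principal minors $a,\ a^2-b^2,\ a(a^2-2b^2),\ (a^2-2b^2)^2$. This gives $f\cong\langle a, a\alpha, a\alpha\beta, a\beta\rangle$ with $\alpha=a^2-b^2$ and $\beta=a^2-2b^2$. The auxiliary quantity $\alpha$ then has to be removed from the Hasse--Witt invariant. For that the paper needs the extra identity $(\alpha,-\beta)_p=1$, which comes from $\alpha-\beta=b^2$.

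\textbf{Your route.} The order $e_1,e_3,e_2,e_4$ uses $f(e_1,e_3)=0$ and the fact that $f(e_4,e_1)=-b$ and $f(e_4,e_3)=b$ have opposite signs. That sign difference is exactly what makes $f(e_2',e_4')=0$. You get $f\cong\langle a,a,c/a,c/a\rangle$, from which positivity, $d(f)=1$ and $\varepsilon_p(f)=(c,-1)_p$ follow at once, with no auxiliary Hilbert-symbol identity.

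\textbf{What each buys.} The paper's method is mechanical, needs no inspection of the matrix's structure, and follows the same template the paper reuses for the family 4-2'. Yours is shorter and more transparent. Your trick also transfers to 4-2': there $e_1\perp e_4$, and the order $e_1,e_4,e_2,e_3$ gives $\langle 2a,2a,\beta/(2a),\beta/(2a)\rangle$ with $\beta=3a^2-b^2$.

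The eigenvalue remark and the fallback plan in your last paragraph are not needed, since the orthogonality check goes through.
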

\begin{proof}[Proof of Claim]
The leading principal minors of $M$ are
\[
\begin{aligned}
\Delta_1 &= a,\\
\Delta_2 &= a^2-b^2,\\
\Delta_3 &= a(a^2-2b^2),\\
\Delta_4 &= (a^2-2b^2)^2.
\end{aligned}
\]
If $a>0$ and $a^2-2b^2>0$, then these minors are all positive and so $f$ can be diagonalized as
\[
f'=\left\langle a, \frac{a^2-b^2}{a}, \frac{a(a^2-2b^2)}{a^2-b^2}, \frac{(a^2-2b^2)}{a}  \right\rangle.
\]
Since $f'$ is positive definite this implies that $f$ itself is positive definite. Since we may multiply entries by squares without changing the rational equivalence class, we see that this is further equivalent to
\[
f''=\left\langle a, a\alpha, a\alpha\beta, a\beta   \right\rangle.
\]
where $\alpha =a^2-b^2$ and $\beta=a^2-2b^2$. Since $\alpha -\beta =b^2$, we have that $(\alpha,-\beta)_p=1$ for all primes $p$. Thus we can calculate the Hasse-Witt invariants of $f$ as
\begin{align*}
    \varepsilon_p(f)&=(\alpha,\alpha\beta)_p(\alpha,\beta)_p(\alpha\beta,\beta)_p\\
    &=(\alpha,-\beta)_p(\alpha,\beta)_p(-\alpha,\beta)_p\\
    &=(\beta,-1)_p.\\
\end{align*}
\end{proof}

Consider the rational quadratic form given by $\beta(x,y)=x^2-2y^2$. This satisfies $d(\beta)=-2$ and $\varepsilon_p(\beta)=1$ for all primes $p$. Let $q_1,\dots, q_\ell$ be a finite collection of primes $q_i\equiv 7 \bmod 8$. For any such $q_i$, we have $(q_i,2)_p=1$ for all $p$. Thus Theorem~\ref{thm:rank2_realization} implies that we can find $a,b\in \Q_{>0}$ such that $a^2-2b^2=q_1\cdots q_\ell$. Thus if we take $f$ to be a form defined by such an $a$ and $b$, then the claim implies that $f$ is positive definite and that

\begin{align*}
    \varepsilon_p(f)&=(q_1\cdots q_\ell, -1)_p\\
    &=\begin{cases}
    -1 &\text{if $p\in \{q_1, \dots, q_\ell\}$}\\
    (-1)^{\ell} &\text{if $p=2$}\\
    1 &\text{otherwise}.
\end{cases}
\end{align*}
This completes the verification of Lemma~\ref{lem:dim4analysis}\eqref{it:4-1'}.

\subsection{The family 4-2'}
The family 4-2' is the crystal family containing the faithful rational representation of $C_{12}$.
Let $f$ be a positive definite rational quadratic form such that $O(f;\Q)$ contains the image of a representation in the family 4-2'. From Proposition~\ref{prop:cyclic_rep_calc}, we see that $d(f)=1$.
The imaginary quadratic subfields of $\Q[\zeta_{12}]$ are $\Q[\sqrt{-1}]$ and $\Q[\sqrt{-3}]$. Thus we have that $\varepsilon_p(f)=1$ if $-3$ or $-1$ is a square in $\Q_p$. However $-1$ is a square in $\Q_p$ if and only if $p\equiv 1\bmod 4$ and $-3$ is a square in $\Q_p$ if and only if $p\equiv 1 \bmod 3$. Thus, Proposition~\ref{prop:cyclic_rep_calc} shows that $\varepsilon_p(f)=1$ whenever $p\equiv 1,5,7\bmod 12$. Thus Proposition~\ref{prop:cyclic_rep_calc} gives all the restrictions on $f$ required by Lemma~\ref{lem:dim4analysis}\eqref{it:4-2'}. It remains to verify that all forms satisfying these restrictions can be realized.

The aim is to show that for any collection of distinct primes $q_1, \dots, q_\ell$ such that $q_i\equiv 3,11 \bmod{12}$, there exists a positive definite quadratic form $f$ of rank 4 such that $O(f;\Q)$ contains a matrix of order 12 and we have
\[
\varepsilon_p(f)=\begin{cases}
    -1 &\text{if $p\in \{q_1, \dots, q_\ell\}$}\\
    (-1)^{\ell} &\text{if $p=2$}\\
    1 &\text{otherwise}.
\end{cases}
\]

Let $f$ be a quadratic form defined by the symmetric matrix
\[
M=
\begin{pmatrix}
    2a&b&a&0\\ b& 2a& b& a\\ a& b& 2a& b\\ 0& a &b &2a
\end{pmatrix}
\]
Since the matrix
    \[
A=\begin{pmatrix}
    0&0&0&-1\\
    1&0&0&0\\
    0&1&0&1\\
    0&0&1&0.
\end{pmatrix}
\]
has order 12 and satisfies $A^TMA=M$, we see that there is a representation in the family 4-2' taking values in $O(f;\Q)$.

\begin{claim}
    If $a>0$ and $3a^2-b^2>0$, then $f$ is positive definite and for any prime $p$ we have
    \[
    \varepsilon_p(f)=(3a^2-b^2,-1)_p.
    \]
\end{claim}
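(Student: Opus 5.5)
The plan is to copy the proof of the corresponding claim for the family 4-1': compute the leading principal minors of $M$, diagonalize $f$ by the Lagrange/Jacobi procedure, and put the resulting diagonal form into the same shape as $f''$ there. The Hasse--Witt computation from that claim should then go through with only the names of the auxiliary quantities changed.

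\textbf{Step 1: the minors.} A direct expansion gives
\[
\Delta_1=2a,\qquad \Delta_2=4a^2-b^2,\qquad \Delta_3=2a(3a^2-b^2).
\]
For $\Delta_4=\det M$ I expect $(3a^2-b^2)^2$. This is consistent with Proposition~\ref{prop:cyclic_rep_calc}, which forces $d(f)=1$, but I would check it by an explicit cofactor expansion. Computing $\det M$ and confirming that it is exactly this square is the only step I regard as a genuine (if routine) obstacle.

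Now suppose $a>0$ and $3a^2-b^2>0$. Then $4a^2-b^2>3a^2-b^2>0$, so all four minors are positive, and hence $f$ is positive definite. Jacobi diagonalization gives
\[
f\cong\left\langle \Delta_1,\ \frac{\Delta_2}{\Delta_1},\ \frac{\Delta_3}{\Delta_2},\ \frac{\Delta_4}{\Delta_3}\right\rangle .
\]

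\textbf{Step 2: normal form.} Set
\[
c=2a,\qquad \alpha=4a^2-b^2,\qquad \beta=3a^2-b^2.
\]
Multiplying entries by squares, the diagonal form above becomes
\[
f\cong\langle c,\ c\alpha,\ c\alpha\beta,\ c\beta\rangle .
\]
This is exactly the shape of $f''$ in the 4-1' claim. The key identity is again that $\alpha-\beta=a^2$ is a nonzero square. Therefore $\alpha x^2-\beta y^2=z^2$ has the nontrivial solution $(1,1,a)$, and so $(\alpha,-\beta)_p=1$ for every $p$.

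\textbf{Step 3: Hasse--Witt invariant.} To remove the dependence on $c$, I would either expand the Hasse--Witt product directly or use the scaling formula for a rank four form. Since the discriminant $\alpha^2\beta^2$ is a square, scaling by $c$ changes $\varepsilon_p$ by $(c,-1)_p^{6}(c,d)_p^{3}=1$. So it suffices to compute
\[
\varepsilon_p\langle 1,\alpha,\alpha\beta,\beta\rangle=(\alpha,\alpha\beta)_p(\alpha,\beta)_p(\alpha\beta,\beta)_p .
\]
Using $(x,x)_p=(x,-1)_p$, this rewrites as
\[
(\alpha,-\beta)_p(\alpha,\beta)_p(-\alpha,\beta)_p .
\]
Applying $(\alpha,-\beta)_p=1$, this collapses to $(\beta,-1)_p=(3a^2-b^2,-1)_p$, which is the claim.
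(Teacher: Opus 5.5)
Your proposal is correct and follows the paper's proof essentially step for step. You use the same leading principal minors (and indeed $\det M=9a^4-6a^2b^2+b^4=(3a^2-b^2)^2$, as you expected), the same normal form $\langle 2a,2a\alpha,2a\alpha\beta,2a\beta\rangle$ with $\alpha-\beta=a^2$, and the same Hilbert-symbol collapse to $(\beta,-1)_p$. Your explicit scaling argument in Step 3 fills in a step the paper leaves implicit: the paper drops the common factor $2a$ without comment, while you show it does not affect $\varepsilon_p$ because the rank is $4$ and the discriminant is a square.
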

\begin{proof}[Proof of Claim]
The leading principal minors of $M$ are
\[
\begin{aligned}
\Delta_1&=2a,\\
\Delta_2&=4a^2-b^2,\\
\Delta_3&=2a(3a^2-b^2),\\
\Delta_4&=(3a^2-b^2)^2.
\end{aligned}
\]
Thus if $a>0$ and $3a^2-b^2>0$, then these minors are all positive and so $f$ can be diagonalized as the positive definite form
\[
f'=\left\langle 2a, \frac{4a^2-b^2}{2a}, \frac{2a(3a^2-b^2)}{4a^2-b^2}, \frac{(3a^2-b^2)}{2a}  \right\rangle.
\]
This is further equivalent to the form
\[
f''=\left\langle 2a, 2a\alpha, 2a\alpha\beta, 2a\beta  \right\rangle,
\]
where $\alpha=4a^2-b^2$ and $\beta=3a^2-b^2$. Since $\alpha-\beta=a^2$, we have $(\alpha,-\beta)_p=1$ for all primes $p$. Thus we may calculate the Hasse-Witt invariants of $f$ by
\begin{align*}
\varepsilon_p(f)&=(\alpha, \alpha \beta)_p(\alpha, \beta)_p(\alpha\beta, \beta)_p\\
&=(\alpha, -\beta)_p(\beta,-1)_p\\
&=(\beta,-1)_p.
\end{align*}
\end{proof}

Consider the rational quadratic form given by $\beta(x,y)=3x^2-y^2$. This satisfies $d(\beta)=-3$ and $\varepsilon_p(\beta)=(3,-1)_p$ for all primes $p$. Let $q$ be a prime $q=2$, $q=3$ or $q\equiv 11 \bmod 12$. For such a prime one can calculate that \[
(q,3)_p=(3,-1)_p=
\begin{cases}
    1 & p>3\\
    -1 & p=2,3
\end{cases}
\] 
for all $p$. Thus if we take $q_1,\dots, q_\ell$ to be a collection of distinct primes such that $q_i=3$ or $q_i\equiv 11 \bmod 12$, then
\[
(2^{\ell-1} q_1\dots q_\ell,3)_p=(3,-1)_p,
\]
for all $p$. Thus Theorem~\ref{thm:rank2_realization} implies that we can find $a,b\in \Q_{>0}$ such that $3a^2-b^2=2^{\ell-1}q_1\cdots q_\ell$. For such a choice of $a, b$, we have that $f$ is positive definite and one can calculate

\begin{align*}
\varepsilon_p(f)&=(2^{\ell-1}q_1\cdots q_\ell,-1)_p\\
&=\begin{cases}
    -1 &\text{if $p\in \{q_1, \dots, q_\ell\}$}\\
    (-1)^{\ell} &\text{if $p=2$}\\
    1 &\text{otherwise}.
\end{cases}
\end{align*}
This completes the verification of Lemma~\ref{lem:dim4analysis}\eqref{it:4-2'}.

\subsection{The family 4-3'}
The family 4-3' is the crystal family containing the faithful rational representation of $C_5$. Let $f$ be a positive definite rational quadratic form such that $O(f;\Q)$ contains the image of a representation in the family 4-3'. From Proposition~\ref{prop:cyclic_rep_calc}, we see that $d(f)=5$. On the other hand if $\Q_p$ contains a primitive 5th root of unity, we see that a representation of family 4-3' will split into 4 complex representations and we will have $\varepsilon_p(f)=1$. This implies that $\varepsilon_p(f)=1$ whenever $p\equiv 1\bmod 5$.

Since every form $f$ satisfies $d(f)=5$, we see that its projective equivalence class is determined by the Hasse-Witt invariants $\varepsilon_p(f)$ for all $p$ such that $5$ is a square in $\Q_p$. These primes are precisely those $p$ such that $p\equiv 1,4\bmod 5$. Thus it remains to show that for every collection of distinct primes $q_1, \dots, q_\ell$ such that $q_i\equiv 4\bmod 5$, there exists a positive definite quadratic form $f$ of rank 4 such that $O(f;\Q)$ contains a matrix of order 5 and
\[
\varepsilon_p(f)=\begin{cases}
    -1 &p=q_1,\dots, q_\ell\\
    1 &\text{$p\equiv 4\bmod 5$ and $p\neq q_1,\dots, q_\ell $}
\end{cases}
\]

Let $f$ be a quadratic form represented by a symmetric matrix
\[
M=\begin{pmatrix}
    2(a+b) & -a & -b&-b\\
    -a & 2(a+b) & -a & -b\\
    -b& -a & 2(a+b)& -a\\
    -b& -b & -a & 2(a+b)
\end{pmatrix}
\]
Since 
\[
A=\begin{pmatrix}
    0&0&0&-1\\
    1&0&0&-1\\
    0&1&0&-1\\
    0&0&1&-1.
\end{pmatrix}
\]
is a matrix of order 5 and $A^TMA=M$, we see that there is a representation of family 4-3' taking values in $O(f;\Q)$.

\begin{claim}
    Over the field $\Q[\sqrt{5}]$, $f$ is equivalent to the diagonal form
    \begin{equation}\label{eq:root5diag}
        f'''=\langle \alpha, d\alpha, \beta, d\beta \rangle,
    \end{equation}
where $\alpha=2a+3b+b\sqrt{5}$, $\beta=2a+3b-b\sqrt{5}$ and $d=\frac{5+\sqrt{5}}{8}.$
\end{claim}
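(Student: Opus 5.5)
The plan is to exploit the order-$5$ symmetry $A$ to split $f$ over $\Q[\sqrt5]$ into two orthogonal planes, and then diagonalize each plane by hand. Let $\langle u,w\rangle=u^TMw$ be the bilinear form of $f$. Since $A^TMA=M$, the operator $S=A+A^{-1}$ is self-adjoint for $\langle\cdot,\cdot\rangle$. The minimal polynomial of $A$ is $\Phi_5$, and $\Phi_5(x)=x^2\bigl(y^2+y-1\bigr)$ with $y=x+x^{-1}$. Hence $S$ satisfies $S^2+S-1=0$, and its eigenvalues
\[
\lambda_\pm=\tfrac{-1\pm\sqrt5}{2}
\]
lie in $\Q[\sqrt5]$.

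First I decompose the space. Over $\Q[\sqrt5]$ we get $\Q[\sqrt5]^4=V_+\oplus V_-$, where $V_\pm=\ker(S-\lambda_\pm)$ are each $2$-dimensional and $A$-invariant. Self-adjointness of $S$ together with $\lambda_+\neq\lambda_-$ makes $V_+$ and $V_-$ orthogonal. Therefore $f\cong f|_{V_+}\oplus f|_{V_-}$ over $\Q[\sqrt5]$.

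Next I diagonalize each plane uniformly. For a nonzero $v\in V_\pm$ put $x=f(v)$. Invariance gives $\langle v,Av\rangle=\langle A^{-1}v,v\rangle$, and so
\[
2\langle v,Av\rangle=\langle v,Sv\rangle=\lambda_\pm x .
\]
In the basis $v,Av$ the Gram matrix is therefore
\[
x\begin{pmatrix}1&\lambda_\pm/2\\ \lambda_\pm/2&1\end{pmatrix},
\]
which diagonalizes to $\langle x,\;x(1-\lambda_\pm^2/4)\rangle$. For $\lambda_+$ we have $\lambda_+^2=\tfrac{3-\sqrt5}{2}$, so $1-\lambda_+^2/4=\tfrac{5+\sqrt5}{8}=d$. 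For $\lambda_-$ we get $\tfrac{5-\sqrt5}{8}$, and
\[
\frac{(5-\sqrt5)/8}{d}=\frac{(5-\sqrt5)^2}{20}=\Bigl(\frac{5-\sqrt5}{2\sqrt5}\Bigr)^2
\]
is a square in $\Q[\sqrt5]$. So each plane has the shape $\langle x_\pm, d\,x_\pm\rangle$, and only the values $x_\pm$ up to squares remain to be found.

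The remaining step, which I expect to be the only real computation, is to show $x_+\equiv\alpha$ and $x_-\equiv\beta$ modulo squares, after matching the sign conventions for $\pm$. I would take an explicit eigenvector. Since $S$ commutes with $A$, a convenient choice is $v=(S-\lambda_\mp)e_1$, which is the projection of $e_1$ onto $V_\pm$ up to scale. I then compute $v^TMv$ directly. Because $v=(S-\lambda_\mp)e_1$, expanding gives
\[
f(v)=e_1^T(S-\lambda_\mp)^TM(S-\lambda_\mp)e_1 ,
\]
which reduces to a combination of $\langle e_1,A^ke_1\rangle$, that is, of the entries $2(a+b),-a,-b$ of the first row of $M$. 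I expect this to equal $\alpha$ or $\beta$ multiplied by an explicit element of $\Q[\sqrt5]$, such as $\tfrac{5\pm\sqrt5}{2}$. One must then check that this factor is a square times an expression that can be absorbed; if it is not, I would instead adjust by pairing it with $d$ or with the square computed above. The last thing to verify is that the Galois conjugation $\sqrt5\mapsto-\sqrt5$, which swaps $V_+$ and $V_-$, sends $\alpha$ to $\beta$ consistently with this bookkeeping. This gives $f\cong\langle\alpha,d\alpha,\beta,d\beta\rangle$ over $\Q[\sqrt5]$.
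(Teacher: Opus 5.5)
Your structural reduction is correct and is essentially the paper's own argument, made conceptual. The paper's matrix $X$ equals $\frac{1}{\sqrt5}(w, Aw, \bar w, A\bar w)$, where $w=(4,3+\sqrt5,2,1-\sqrt5)^T$ lies in your $V_+=\ker(A+A^{-1}-\lambda_+)$. The blocks of $X^TMX$ are exactly your Gram matrices $x\begin{pmatrix}1&\lambda_\pm/2\\ \lambda_\pm/2&1\end{pmatrix}$ with $x=4\alpha$ and $x=4\beta$; the paper's $c_1,c_2$ are $\lambda_\pm/2$.

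However, you stop short of the only step in which $\alpha$ and $\beta$ actually enter, and as you formulated it, that step fails. Take your vector $v_+=(S-\lambda_-)e_1=(\tfrac{-1+\sqrt5}{2},0,-1,-1)^T$. Using $(S-\lambda_-)v_+=\sqrt5\,v_+$, $\langle e_1,e_1\rangle=2(a+b)$ and $\langle e_1,Se_1\rangle=-2a$, one gets
\[
x_+=f(v_+)=\sqrt5\,\langle e_1,(S-\lambda_-)e_1\rangle=5(a+b)+\sqrt5\,(b-a)=\tfrac{5-\sqrt5}{2}\,\alpha .
\]
The factor $\tfrac{5-\sqrt5}{2}$ has norm $5$, so it is not a square in $\Q[\sqrt5]$. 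Hence your stated goal, that $x_+$ and $\alpha$ agree modulo squares, is false for this choice of eigenvector.

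The missing observation is that the plane $\langle x,dx\rangle$ determines $x$ only modulo nonzero values of $\langle 1,d\rangle$, not modulo squares. In particular $\langle x,dx\rangle\cong\langle dx,d^2x\rangle$, because $d^2$ is a square. Here $\tfrac{5-\sqrt5}{2}=d(\sqrt5-1)^2$, so $\langle x_+,dx_+\rangle\cong\langle d\alpha,\alpha\rangle$. By Galois conjugation, $x_-=\tfrac{5+\sqrt5}{2}\beta=4d\beta$. Combined with your square-ratio computation, this gives $\langle x_-,\tfrac{5-\sqrt5}{8}x_-\rangle\cong\langle x_-,dx_-\rangle\cong\langle d\beta,\beta\rangle$.

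With this fix, your argument yields $f\cong\langle\alpha,d\alpha,\beta,d\beta\rangle$. Alternatively, use the paper's eigenvector $w$: one computes $f(w)=20\alpha$, and $20=4(\sqrt5)^2$ is a square. Your fallback of \emph{pairing with $d$} points in the right direction. To be a proof, though, it needs the explicit value of $f(v_\pm)$ and the justification $\langle x,dx\rangle\cong\langle dx,x\rangle$.
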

\begin{proof}[Proof of Claim]
If we take $X\in GL_4(\Q[\sqrt{5}])$ to be the matrix
\[
X=\frac{1}{\sqrt{5}}\begin{pmatrix}
4&-1+\sqrt{5}&4&-1-\sqrt{5}\\
3+\sqrt{5}&3+\sqrt{5}&3-\sqrt{5}&3-\sqrt{5}\\
2& 2+2\sqrt{5}&2&2-2\sqrt{5}\\
1-\sqrt{5}&1+\sqrt{5}&1+\sqrt{5}&1-\sqrt{5},
\end{pmatrix}
\]
then we see that $f$ is equivalent over $\Q[\sqrt{5}]$ to the form represented by the matrix $M'=X^T M X$. One can calculate that
\[
M'=2
\begin{pmatrix}
    4a+6b+2b\sqrt{5} & b-a+(a+b)\sqrt{5} &0 &0 \\
    b-a+(a+b)\sqrt{5} &4a+6b+2b\sqrt{5} &0 &0 \\
   0 &0&4a+6b-2b\sqrt{5} & b-a-(a+b)\sqrt{5} \\
    0&0&b-a-(a+b)\sqrt{5} &4a+6b-2b\sqrt{5}.
\end{pmatrix}
\]
Conveniently $M'$ can be expressed in the form
\[M'=4
\begin{pmatrix}
    \alpha & c_1\alpha &0 &0 \\
    c_1\alpha &\alpha &0 &0 \\
    0&0&\beta & c_2\beta \\
    0&0&c_2\beta &\beta 
\end{pmatrix}
\]
where $\alpha=2a+3b+b\sqrt{5}$, $c_1=\frac{-1+\sqrt{5}}{4}$ and $\beta=2a+3b-b\sqrt{5}$, $c_2=\frac{-1-\sqrt{5}}{4}$. Thus over $\Q[\sqrt{5}]$, we see that $f$ can be diagonalized to the form represented by the matrix
\[
f''=\langle \alpha, (1-c_1^2)\alpha, \beta, (1-c_2^2)\beta \rangle,
\]

Since $1-c_1^2=\frac{5+\sqrt{5}}{8}$ and $1-c_2^2=\frac{5-\sqrt{5}}{8}$, one can check that $1-c_1^2=(\frac{1+\sqrt{5}}{2})^2 (1-c_2^2)$. Thus $f$ is equivalent over $\Q[\sqrt{5}]$ to the diagonal form
\[
f'''=\langle \alpha, \gamma\alpha, \beta, \gamma\beta \rangle,
\]
where $\gamma=1-c_1^2=\frac{5+\sqrt{5}}{8}$.
\end{proof}
This allows us to calculate the Hasse-Witt invariants of $f$ for $p\equiv 4\bmod 5$.
\begin{claim}
    Suppose that $a^2+3ab+b^2>0$. Then $f$ is definite. For any prime $p\equiv 4\bmod 5$, we have
    \[
    \varepsilon_p(f)=-1
    \]
   if and only if the $p$-adic valuation of $a^2+3ab+b^2$ is odd.
\end{claim}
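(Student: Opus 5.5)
The plan is to use the diagonalization \eqref{eq:root5diag} from the previous claim twice: once over $\R$ to get definiteness, and once over $\Q_p$ for primes $p\equiv 4\bmod 5$ to compute the Hasse--Witt invariant. The starting observation is
\[
\alpha\beta=(2a+3b)^2-5b^2=4(a^2+3ab+b^2),
\]
so the hypothesis $a^2+3ab+b^2>0$ says exactly that $\alpha\beta>0$.

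\emph{Definiteness.} I would fix a real embedding $\Q[\sqrt5]\hookrightarrow\R$. Under it, $f$ is $\R$-equivalent to $\langle \alpha,\gamma\alpha,\beta,\gamma\beta\rangle$. Both real conjugates $\frac{5\pm\sqrt5}{8}$ of $\gamma$ are positive. Since $\alpha\beta>0$, the numbers $\alpha$ and $\beta$ have the same sign. Hence all four diagonal entries share the sign of $\alpha+\beta=2(2a+3b)$, and $f$ is definite (positive definite when $2a+3b>0$).

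\emph{Hasse--Witt invariant.} Take $p\equiv 4\bmod 5$. Then $5\in\Qpxsquare$, so $\Q[\sqrt5]$ embeds in $\Q_p$ and $f\cong\langle\alpha,\gamma\alpha,\beta,\gamma\beta\rangle$ over $\Q_p$. I would write this as $\alpha\langle1,\gamma\rangle\oplus\beta\langle1,\gamma\rangle$ and apply the product formula $\varepsilon(g\oplus h)=\varepsilon(g)\varepsilon(h)(d(g),d(h))$. Each summand has discriminant $\gamma$ modulo squares, and $\varepsilon(\alpha\langle1,\gamma\rangle)=(\alpha,\alpha\gamma)_p=(\alpha,-\gamma)_p$. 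This should give
\[
\varepsilon_p(f)=(\alpha,-\gamma)_p(\beta,-\gamma)_p(\gamma,\gamma)_p=(\alpha\beta,-\gamma)_p(\gamma,-1)_p .
\]
The norm of $\gamma$ is $N(\gamma)=\frac{25-5}{64}=\frac{5}{16}$, which is a unit at $p$, so $\gamma$ is a $p$-adic unit. Because $p$ is odd, $(\gamma,-1)_p=1$. Also $\alpha\beta=4(a^2+3ab+b^2)$, so
\[
\varepsilon_p(f)=(a^2+3ab+b^2,-\gamma)_p=\left(\tfrac{\overline{-\gamma}}{p}\right)^{v_p(a^2+3ab+b^2)},
\]
where $\overline{-\gamma}$ denotes the residue of $-\gamma$ modulo $p$. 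The claim therefore reduces to showing that $-\gamma$ is not a square in $\Q_p$ (equivalently, its residue is a nonsquare) whenever $p\equiv4\bmod5$.

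\emph{Main obstacle: the nonsquare statement.} For this I would use the cyclotomic identity: for a primitive fifth root of unity $\zeta$,
\[
(\zeta-\zeta^{-1})^2=\zeta^2+\zeta^{-2}-2=-\tfrac{5\pm\sqrt5}{2},
\]
with the sign depending on the choice of $\zeta$. Thus $-\gamma=-\frac{5+\sqrt5}{8}=\left(\frac{\zeta-\zeta^{-1}}{2}\right)^2$ for a suitable $\zeta$. The same argument works for the conjugate $\frac{5-\sqrt5}{8}$, so it does not matter which square root of $5$ in $\Q_p$ is used.

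Suppose $-\gamma$ were a square in $\Q_p$. Then $\zeta-\zeta^{-1}\in\Q_p$. We already have $\zeta+\zeta^{-1}=\frac{-1\pm\sqrt5}{2}\in\Q_p$, so $\zeta\in\Q_p$, which forces $p\equiv1\bmod5$. This contradicts $p\equiv4\bmod5$, so $-\gamma$ is a nonsquare unit at $p$.

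It follows that $\varepsilon_p(f)=-1$ exactly when $v_p(a^2+3ab+b^2)$ is odd. The only delicate bookkeeping is matching $\zeta$ with the chosen embedding of $\sqrt5$, and the conjugate-invariance above removes that issue.
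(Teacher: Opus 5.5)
Your proposal is correct and follows essentially the same route as the paper. You use the diagonalization $\langle\alpha,\gamma\alpha,\beta,\gamma\beta\rangle$ over $\R$ for definiteness, reduce $\varepsilon_p(f)$ to $(a^2+3ab+b^2,-\gamma)_p(\gamma,\gamma)_p$, show $\gamma$ is a $p$-adic unit, and rule out $-\gamma$ being a square because that would put a primitive fifth root of unity in $\Q_p$. The differences are cosmetic: you get the unit property from the norm $N(\gamma)=5/16$ rather than the minimal polynomial $16X^2-20X+5$ (this tacitly uses that $8\gamma=5+\sqrt5$ is integral and $p$ is odd), and you phrase the root-of-unity step via $\zeta\pm\zeta^{-1}$ instead of the paper's $\zeta=c+s$.
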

\begin{proof}[Proof of Claim]
Since the diagonalization of \eqref{eq:root5diag} is valid over $\R$ and $d>0$, we see that $f$ is definite if and only if $\alpha\beta=4(a^2+3ab+b^2)>0$.

Now let $p$ be any prime such that $p\equiv 4\bmod 5$. Since $5$ is a quadratic residue mod $p$, we see that $5$ is a square in $\Q_p$. Thus we may choose an embedding of $\Q[\sqrt{5}]$ into $\Q_p$, allowing us to consider $\Q[\sqrt{5}]$ as a subfield of $\Q_p$.  

Since the diagonalization \eqref{eq:root5diag} of $f$ can be performed over $\Q_p$, we have
\begin{align*}
    \varepsilon_p(f)&=(\alpha \beta, -\gamma)_p (\gamma,\gamma)_p\\
    &= (a^2 + 3ab + b^2, -\gamma)_p (\gamma,\gamma)_p.
\end{align*}
First observe that the $p$-adic valuation of $d$ in $\Q_p$ is zero. This is because the minimal polynomial of $\gamma$, which is $16X^2-20X+5=0$, has two distinct non-zero roots mod $p$. This implies that $(\gamma,\gamma)_p=1$.

Next we observe that $-\gamma$ is not a square in $\Q_p$. If $-\gamma$ were a square in $\Q_p$, then $\Q_p$ would contain a primitive 5th root of unity. In particular, if $s\in \Q_p$ satisfies $s^2=-\gamma$, then one has that $\zeta=c+s$ is a root of the polynomial $X^4+X^3+X^2+X+1$, where $c=\frac{-1+\sqrt{5}}{4}$. One should expect this to be the case because $c=\frac{-1+\sqrt{5}}{4}=\cos \frac{2\pi}{5}$ and $\gamma=1-c^2$ so the quantity $s=\sqrt{c^2-1}$ behaves like $i\sin\frac{2\pi}{5}$. However, $\Q_p$ contains a primitive $5$th root of unity if and only if $p\equiv 1 \bmod 5$.

Thus for $p\equiv 4\bmod 5$, we have that
\[
\varepsilon_p(f)= (a^2 + 3ab + b^2, -\gamma)_p,
\]
where $-\gamma$ is a non-square in $\Q_p$ with zero $p$-adic valuation. It follows that $\varepsilon_p(f)$ is purely governed by the parity of the $p$-adic valuation of $a^2 + 3ab + b^2$, as required.
\end{proof}

Consider the rational quadratic form $\delta(x,y)=x^2+3xy+y^2$. We have $d(\delta)=-5$ and $\varepsilon_p(\delta)=1$ for all $p$. Let $q_1,\dots,q_\ell$ be a collection of distinct primes such that $q_i\equiv 4\bmod 5$. Since $(q_i,5)_p=1$ for all $p$, Theorem~\ref{thm:rank2_realization} implies that there are $a,b$ such that
\[
a^2+3ab+b^2=q_1\dots q_\ell.
\]
Moreover, we may assume that $a+b>0$. For such $a$ and $b$, we have that $f$ is positive definite and that for any prime $p\equiv 4\bmod 5$ we have that
\[
\varepsilon_p(f)=\begin{cases}
    -1 & \text{if $p\in \{q_1, \dots, q_\ell\}$}\\
    1 &\text{otherwise.}
\end{cases}
\]
This completes the verification of Lemma~\ref{lem:dim4analysis}\eqref{it:4-3'}.

\section{Further data}
Finally, we present a more complete breakdown of the data on the occurrence of flat manifolds of dimensions $n=3,4,5,6$ in terms of the holonomy group.
\begin{longtable}{|c | c | c | c | c | c|}
\hline
$|H|$ & $H$ & $\mathcal{C}$ & Type & \# Manifolds & \# Orientable \\
\hline
1 & \(1\) & 1,1,1  & \eqref{it:dim3none} & 1 & 1 \\
\hline
\hline
2 & \(C_2\) & 1,1,1 & \eqref{it:dim3none} & 3 & 1 \\
\hline
\hline
3 & \(C_3\) & 2-2,1 & \eqref{it:dim3mod3} & 1 & 1 \\
\hline
\hline
4 & \(C_2^2\) & 1,1,1  & \eqref{it:dim3none} & 3 & 1 \\
\hline
 & \(C_4\) & 2-1,1 & \eqref{it:dim3mod4} & 1 & 1 \\
\hline
\hline
6 & \(C_6\) & 2-2,1 & \eqref{it:dim3mod3} & 1 & 1 \\
\hline
\caption{The data for flat 3-manifolds broken down by holonomy group.}
\label{tab:dim3}
\end{longtable}

\begin{longtable}{|c | c | c | c | c | c|}
\hline
$|H|$ & $H$ & $\mathcal{C}$ & Type & \# Manifolds & \# Orientable \\
\hline
1 & \(1\) & 1,1,1,1 & \eqref{it:dim4none} & 1 & 1 \\
\hline
\hline
2 & \(C_2\) & 1,1,1,1 & \eqref{it:dim4none} & 5 & 2 \\
\hline
\hline
3 & \(C_3\) & 2-2,1,1 & \eqref{it:dim4mod3} & 2 & 2 \\
\hline
\hline
4 & \(C_2^2\) & 1,1,1,1 & \eqref{it:dim4none} & 26 & 9 \\
\hline
 & \(C_4\) & 2-1,1,1 & \eqref{it:dim4mod4} & 6 & 2 \\
\hline
\hline
6 & \(C_6\) & 2-2,1,1 & \eqref{it:dim4mod3} & 4 & 1 \\
\hline
 & \(S_3\) & 2-2,1,1 & \eqref{it:dim4mod3} & 3 & 3 \\
\hline
\hline
8 & \(C_2^3\) & 1,1,1,1 & \eqref{it:dim4none} & 12 & 0 \\
\hline
 & \(C_4 \times C_2\) & 2-1,1,1 & \eqref{it:dim4mod4} & 3 & 0 \\
\hline
 & \(D_8\) & 2-1,1,1 & \eqref{it:dim4mod4} & 7 & 4 \\
\hline
\hline
12 & \(A_4\) & 3,1& \eqref{it:dim4modother} & 2 & 2 \\
\hline
 & \(C_6 \times C_2\) & 2-2,1,1 & \eqref{it:dim4mod3} & 1 & 0 \\
\hline
 & \(D_{12}\) & 2-2,1,1 & \eqref{it:dim4mod3} & 1 & 1 \\
\hline
\hline
24 & \(C_2 \times A_4\) & 3,1& \eqref{it:dim4modother} & 1 & 0 \\
\hline
\caption{The data for flat 4-manifolds broken down by holonomy group.}
\label{tab:dim4}
\end{longtable}

\begin{longtable}{|c | c | c | c | c | c|}
\hline
$|H|$ & $H$ & $\mathcal{C}$ & Type & \# Manifolds & \# Orientable \\
\hline
\hline
1 & \(1\) & 1,1,1,1,1  & \eqref{it:dim5none} & 1 & 1 \\
\hline
\hline
2 & \(C_2\) &1,1,1,1,1  & \eqref{it:dim5none} & 8 & 4 \\
\hline
\hline
3 & \(C_3\) & 2-2,1,1,1& \eqref{it:dim5none} & 2 & 2 \\
\hline
 & \(C_3\) & 2-2,2-2,1& \eqref{it:dim5 2-2,2-2} & 1 & 1 \\
\hline
\hline
4 & \(C_2^2\) &1,1,1,1,1  & \eqref{it:dim5none} & 121 & 45 \\

\hline
 & \(C_4\) & 2-1,1,1,1& \eqref{it:dim5none} & 18 & 7 \\
\hline
 & \(C_4\) & 2-1,2-1,1& \eqref{it:dim5 2-1,2-1} & 1 & 1 \\
\hline
\hline
5 & \(C_5\) & 4-3',1 & \eqref{it:dim5 4-3p} & 1 & 1 \\
\hline
\hline
6 & \(C_6\) & 2-2,1,1,1& \eqref{it:dim5none} & 12 & 4 \\
\hline
 & \(C_6\) & 2-2,2-2,1& \eqref{it:dim5 2-2,2-2} & 3 & 3 \\
\hline
 & \(S_3\) & 2-2,1,1,1& \eqref{it:dim5none} & 10 & 6 \\
\hline
\hline
8 & \(C_2^3\) &1,1,1,1,1  & \eqref{it:dim5none} & 398 & 33 \\
\hline
 & \(C_4 \times C_2\) & 2-1,1,1,1& \eqref{it:dim5none} & 70 & 6 \\
\hline
 & \(C_4 \times C_2\) & 2-1,2-1,1& \eqref{it:dim5 2-1,2-1} & 3 & 3 \\
\hline
 & \(C_8\) & 4-1',1 & \eqref{it:dim5 4-1p} & 1 & 1 \\
\hline
 & \(D_8\) & 2-1,1,1,1& \eqref{it:dim5none} & 114 & 32 \\
\hline
 & \(D_8\) & 2-1,2-1,1& \eqref{it:dim5 2-1,2-1} & 1 & 0 \\
\hline
\hline
9 & \(C_3^2\) & 2-2,2-2,1& \eqref{it:dim5 2-2,2-2} & 3 & 3 \\
\hline
\hline
10 & \(C_{10}\) & 4-3',1 & \eqref{it:dim5 4-3p} & 1 & 1 \\
\hline
\hline
12 & \(A_4\) & 3,1,1& \eqref{it:dim5none} & 2 & 2 \\
\hline
 & \(C_6 \times C_2\) & 2-2,1,1,1& \eqref{it:dim5none} & 17 & 1 \\
\hline
 & \(C_{12}\) & 2-1,2-2,1& \eqref{it:dim5 2-1,2-2} & 2 & 2 \\
\hline
 & \(C_{12}\) & 4-2',1 & \eqref{it:dim5 4-2p} & 1 & 1 \\
\hline
 & \(D_{12}\) & 2-2,1,1,1& \eqref{it:dim5none} & 33 & 4 \\
\hline
\hline
16 & \((C_4 \times C_2) \rtimes C_2\) & 2-1,2-1,1& \eqref{it:dim5 2-1,2-1} & 13 & 2 \\
\hline
 & \(C_2 \times D_8\) & 2-1,1,1,1& \eqref{it:dim5none} & 59 & 0 \\
\hline
 & \(C_2^4\) &1,1,1,1,1  & \eqref{it:dim5none} & 123 & 2 \\
\hline
 & \(C_4 \times C_2^2\) & 2-1,1,1,1& \eqref{it:dim5none} & 16 & 0 \\
\hline
\hline
18 & \(C_3 \times S_3\) & 2-2,2-2,1& \eqref{it:dim5 2-2,2-2} & 4 & 0 \\
\hline
 & \(C_6 \times C_3\) & 2-2,2-2,1& \eqref{it:dim5 2-2,2-2} & 1 & 1 \\
\hline
\hline
24 & \(C_2 \times A_4\) & 3,1,1& \eqref{it:dim5none} & 7 & 2 \\
\hline
 & \(C_2^2 \times S_3\) & 2-2,1,1,1& \eqref{it:dim5none} & 4 & 0 \\
\hline
 & \(C_6 \times C_2^2\) & 2-2,1,1,1& \eqref{it:dim5none} & 2 & 0 \\
\hline
 & \(C_{12} \times C_2\) & 2-1,2-2,1& \eqref{it:dim5 2-1,2-2} & 1 & 1 \\
\hline
 & \(S_4\) & 3,1,1& \eqref{it:dim5none} & 4 & 2 \\
\hline
\hline
48 & \(C_2^2 \times A_4\) & 3,1,1& \eqref{it:dim5none} & 2 & 0 \\
\hline
\caption{The data for flat 5-manifolds broken down by holonomy group.}
\label{tab:dim5}
\end{longtable}

\begin{longtable}{|c | c | c | c | c | c|}
\hline
$|H|$ & $H$ & $\mathcal{C}$ & Type & \# Manifolds & \# Orientable \\
\hline
\hline
1 & \(1\) & 1,1,1,1,1,1 & \eqref{it:dim6none} & 1 & 1 \\
\hline
\hline
2 & \(C_2\) & 1,1,1,1,1,1 & \eqref{it:dim6none} & 11 & 5 \\
\hline
\hline
3 & \(C_3\) & 2-2,1,1,1,1 & \eqref{it:dim6none} & 2 & 2 \\
\hline
 & \(C_3\) & 2-2,2-2,1,1 & \eqref{it:dim6 2-2,2-2} & 2 & 2 \\
\hline
\hline
4 & \(C_2^2\) & 1,1,1,1,1,1 & \eqref{it:dim6none} & 429 & 124 \\
\hline
 & \(C_4\) & 2-1,1,1,1,1 & \eqref{it:dim6none} & 37 & 20 \\
\hline
 & \(C_4\) & 2-1,2-1,1,1 & \eqref{it:dim6 2-1,2-1} & 6 & 2 \\
\hline
\hline
5 & \(C_5\) & 4-3',1,1 & \eqref{it:dim6 4-3p} & 2 & 2 \\
\hline
\hline
6 & \(C_6\) & 2-2,1,1,1,1 & \eqref{it:dim6none} & 20 & 9 \\
\hline
 & \(C_6\) & 2-2,2-2,1,1 & \eqref{it:dim6 2-2,2-2} & 12 & 5 \\
\hline
 & \(S_3\) & 2-2,1,1,1,1 & \eqref{it:dim6none} & 18 & 10 \\
\hline
 & \(S_3\) & 2-2,2-2,1,1 & \eqref{it:dim6 2-2,2-2} & 5 & 0 \\
\hline
\hline
8 & \(C_2^3\) & 1,1,1,1,1,1 & \eqref{it:dim6none} & 7050 & 836 \\
\hline
 & \(C_4 \times C_2\) & 2-1,1,1,1,1 & \eqref{it:dim6none} & 736 & 228 \\
\hline
 & \(C_4 \times C_2\) & 2-1,2-1,1,1 & \eqref{it:dim6 2-1,2-1} & 76 & 19 \\
\hline
 & \(C_8\) & 4-1',1,1 & \eqref{it:dim6 4-1p} & 6 & 2 \\
\hline
 & \(D_8\) & 2-1,1,1,1,1 & \eqref{it:dim6none} & 1059 & 219 \\
\hline
 & \(D_8\) & 2-1,2-1,1,1 & \eqref{it:dim6 2-1,2-1} & 34 & 1 \\
\hline
 & \(Q_8\) & 4-1,1,1 & \eqref{it:dim6 4-1} & 4 & 0 \\
\hline
\hline
9 & \(C_3^2\) & 2-2,2-2,1,1 & \eqref{it:dim6 2-2,2-2} & 13 & 13 \\
\hline
\hline
10 & \(C_{10}\) & 4-3',1,1 & \eqref{it:dim6 4-3p} & 4 & 1 \\
\hline
 & \(D_{10}\) & 4-3',1,1 & \eqref{it:dim6 4-3p} & 3 & 0 \\
\hline
\hline
12 & \(A_4\) & 3,1,1,1 & \eqref{it:dim6none} & 2 & 2 \\
\hline
 & \(A_4\) & 3,2-2,1 & \eqref{it:dim6 2-2,2-2} & 14 & 14 \\
\hline
 & \(C_3 \rtimes C_4\) & 2-1,2-2,1,1& \eqref{it:dim6 2-1,2-2} & 12 & 12 \\
\hline
 & \(C_3 \rtimes C_4\) & 4-2,1,1 & \eqref{it:dim6 4-2} & 2 & 0 \\
\hline
 & \(C_6 \times C_2\) & 2-2,1,1,1,1 & \eqref{it:dim6none} & 153 & 48 \\
\hline
 & \(C_6 \times C_2\) & 2-2,2-2,1,1 & \eqref{it:dim6 2-2,2-2} & 11 & 2 \\
\hline
 & \(C_{12}\) & 2-1,2-2,1,1& \eqref{it:dim6 2-1,2-2} & 18 & 6 \\
\hline
 & \(C_{12}\) & 4-2',1,1 & \eqref{it:dim6 4-2p} & 3 & 1 \\
\hline
 & \(D_{12}\) & 2-2,1,1,1,1 & \eqref{it:dim6none} & 298 & 41 \\
\hline
 & \(D_{12}\) & 2-2,2-2,1,1 & \eqref{it:dim6 2-2,2-2} & 8 & 0 \\
\hline
\hline
16 & \((C_4 \times C_2) \rtimes C_2\) & 2-1,2-1,1,1 & \eqref{it:dim6 2-1,2-1} & 539 & 153 \\
\hline
 & \((C_4 \times C_2) \rtimes C_2\) & 2-1,2-1,2-1 & \eqref{it:dim6 2-1,2-1,2-1} & 4 & 0 \\
\hline
 & \((C_4 \times C_2) \rtimes C_2\) & 4-1,1,1 & \eqref{it:dim6 4-1} & 4 & 0 \\
\hline
 & \(C_2 \times D_8\) & 2-1,1,1,1,1 & \eqref{it:dim6none} & 4228 & 345 \\
\hline
 & \(C_2 \times D_8\) & 2-1,2-1,1,1 & \eqref{it:dim6 2-1,2-1} & 69 & 2 \\
\hline
 & \(C_2^4\) & 1,1,1,1,1,1 & \eqref{it:dim6none} & 17488 & 780 \\
\hline
 & \(C_4 \rtimes C_4\) & 2-1,2-1,1,1 & \eqref{it:dim6 2-1,2-1} & 34 & 25 \\
\hline
 & \(C_4 \times C_2^2\) & 2-1,1,1,1,1 & \eqref{it:dim6none} & 1285 & 108 \\
\hline
 & \(C_4 \times C_2^2\) & 2-1,2-1,1,1 & \eqref{it:dim6 2-1,2-1} & 24 & 0 \\
\hline
 & \(C_4^2\) & 2-1,2-1,1,1 & \eqref{it:dim6 2-1,2-1} & 14 & 8 \\
\hline
 & \(C_8 \rtimes C_2\) & 4-1,1,1 & \eqref{it:dim6 4-1} & 7 & 0 \\
\hline
 & \(C_8 \times C_2\) & 4-1',1,1 & \eqref{it:dim6 4-1p} & 3 & 0 \\
\hline
 & \(D_{16}\) & 4-1',1,1 & \eqref{it:dim6 4-1p} & 5 & 0 \\
\hline
 & \(QD_{16}\) & 4-1,1,1 & \eqref{it:dim6 4-1} & 3 & 0 \\
\hline
\hline
18 & \(C_3 \times S_3\) & 2-2,2-2,1,1 & \eqref{it:dim6 2-2,2-2} & 36 & 31 \\
\hline
 & \(C_3 \times S_3\) & 4-2,1,1 & \eqref{it:dim6 4-2} & 5 & 0 \\
\hline
 & \(C_3^2 \rtimes C_2\) & 2-2,2-2,1,1 & \eqref{it:dim6 2-2,2-2} & 6 & 0 \\
\hline
 & \(C_6 \times C_3\) & 2-2,2-2,1,1 & \eqref{it:dim6 2-2,2-2} & 11 & 6 \\
\hline
\hline
20 & \(C_{10} \times C_2\) & 4-3',1,1 & \eqref{it:dim6 4-3p} & 1 & 0 \\
\hline
 & \(D_{20}\) & 4-3',1,1 & \eqref{it:dim6 4-3p} & 1 & 0 \\
\hline
\hline
24 & \((C_6 \times C_2) \rtimes C_2\) & 2-1,2-2,1,1& \eqref{it:dim6 2-1,2-2} & 15 & 5 \\
\hline
 & \(C_2 \times (C_3 \rtimes C_4)\) & 2-1,2-2,1,1& \eqref{it:dim6 2-1,2-2} & 8 & 8 \\
\hline
 & \(C_2 \times A_4\) & 3,1,1,1 & \eqref{it:dim6none} & 21 & 9 \\
\hline
 & \(C_2 \times A_4\) & 3,2-2,1 & \eqref{it:dim6 2-2,2-2} & 34 & 16 \\
\hline
 & \(C_2^2 \times S_3\) & 2-2,1,1,1,1 & \eqref{it:dim6none} & 502 & 30 \\
\hline
 & \(C_3 \rtimes C_8\) & 4-1,1,1 & \eqref{it:dim6 4-1} & 2 & 0 \\
\hline
 & \(C_3 \times D_8\) & 2-1,2-2,1,1& \eqref{it:dim6 2-1,2-2} & 30 & 20 \\
\hline
 & \(C_3 \times D_8\) & 4-2,1,1 & \eqref{it:dim6 4-2} & 1 & 0 \\
\hline
 & \(C_3 \times Q_8\) & 4-1,1,1 & \eqref{it:dim6 4-1} & 1 & 0 \\
\hline
 & \(C_4 \times S_3\) & 2-1,2-2,1,1& \eqref{it:dim6 2-1,2-2} & 15 & 15 \\
\hline
 & \(C_6 \times C_2^2\) & 2-2,1,1,1,1 & \eqref{it:dim6none} & 164 & 19 \\
\hline
 & \(C_{12} \times C_2\) & 2-1,2-2,1,1& \eqref{it:dim6 2-1,2-2} & 27 & 4 \\
\hline
 & \(D_{24}\) & 2-1,2-2,1,1& \eqref{it:dim6 2-1,2-2} & 25 & 0 \\
\hline
 & \(D_{24}\) & 4-2',1,1 & \eqref{it:dim6 4-2p} & 1 & 0 \\
\hline
 & \(S_4\) & 3,1,1,1 & \eqref{it:dim6none} & 13 & 5 \\
\hline
 & \(S_4\) & 3,2-2,1 & \eqref{it:dim6 2-2,2-2} & 6 & 0 \\
\hline
\hline
32 & \(((C_4 \times C_2) \rtimes C_2) \rtimes C_2\) & 4-1,2-1 & \eqref{it:dim6 2-1,2-1,2-1} & 2 & 0 \\
\hline
 & \((C_4 \times C_2^2) \rtimes C_2\) & 2-1,2-1,1,1 & \eqref{it:dim6 2-1,2-1} & 13 & 7 \\
\hline
 & \((C_8 \rtimes C_2) \rtimes C_2\) & 4-1,1,1 & \eqref{it:dim6 4-1} & 1 & 0 \\
\hline
 & \(C_2 \times ((C_4 \times C_2) \rtimes C_2)\) & 2-1,2-1,1,1 & \eqref{it:dim6 2-1,2-1} & 62 & 0 \\
\hline
 & \(C_2^2 \times D_8\) & 2-1,1,1,1,1 & \eqref{it:dim6none} & 871 & 16 \\
\hline
 & \(C_2^4 \rtimes C_2\) & 2-1,2-1,1,1 & \eqref{it:dim6 2-1,2-1} & 172 & 3 \\
\hline
 & \(C_2^4 \rtimes C_2\) & 2-1,2-1,2-1 & \eqref{it:dim6 2-1,2-1,2-1} & 1 & 0 \\
\hline
 & \(C_2^5\) & 1,1,1,1,1,1 & \eqref{it:dim6none} & 2536 & 0 \\
\hline
 & \(C_4 \times C_2^3\) & 2-1,1,1,1,1 & \eqref{it:dim6none} & 184 & 0 \\
\hline
 & \(C_4 \times D_8\) & 2-1,2-1,1,1 & \eqref{it:dim6 2-1,2-1} & 38 & 32 \\
\hline
 & \(C_4^2 \rtimes C_2\) & 4-1,1,1 & \eqref{it:dim6 4-1} & 4 & 0 \\
\hline
\hline
36 & \(C_2 \times (C_3^2 \rtimes C_2)\) & 2-2,2-2,1,1 & \eqref{it:dim6 2-2,2-2} & 1 & 0 \\
\hline
 & \(C_3 \times (C_3 \rtimes C_4)\) & 4-2,1,1 & \eqref{it:dim6 4-2} & 2 & 0 \\
\hline
 & \(C_3 \times A_4\) & 3,2-2,1 & \eqref{it:dim6 2-2,2-2} & 2 & 2 \\
\hline
 & \(C_6 \times S_3\) & 2-2,2-2,1,1 & \eqref{it:dim6 2-2,2-2} & 21 & 13 \\
\hline
 & \(C_6 \times S_3\) & 4-2,1,1 & \eqref{it:dim6 4-2} & 2 & 0 \\
\hline
 & \(C_6^2\) & 2-2,2-2,1,1 & \eqref{it:dim6 2-2,2-2} & 2 & 1 \\
\hline
 & \(S_3^2\) & 2-2,2-2,1,1 & \eqref{it:dim6 2-2,2-2} & 7 & 0 \\
\hline
\hline
48 & \(C_2 \times C_4 \times S_3\) & 2-1,2-2,1,1& \eqref{it:dim6 2-1,2-2} & 3 & 3 \\
\hline
 & \(C_2 \times D_{24}\) & 2-1,2-2,1,1& \eqref{it:dim6 2-1,2-2} & 2 & 0 \\
\hline
 & \(C_2 \times S_4\) & 3,1,1,1 & \eqref{it:dim6none} & 28 & 6 \\
\hline
 & \(C_2 \times S_4\) & 3,2-2,1 & \eqref{it:dim6 2-2,2-2} & 3 & 0 \\
\hline
 & \(C_2^2 \times A_4\) & 3,1,1,1 & \eqref{it:dim6none} & 40 & 5 \\
\hline
 & \(C_2^2 \times A_4\) & 3,2-2,1 & \eqref{it:dim6 2-2,2-2} & 2 & 0 \\
\hline
 & \(C_2^3 \times S_3\) & 2-2,1,1,1,1 & \eqref{it:dim6none} & 24 & 0 \\
\hline
 & \(C_4 \times A_4\) & 3,2-1,1 & \eqref{it:dim6 2-1,2-1} & 5 & 3 \\
\hline
 & \(C_6 \times C_2^3\) & 2-2,1,1,1,1 & \eqref{it:dim6none} & 8 & 0 \\
\hline
 & \(C_6 \times D_8\) & 2-1,2-2,1,1& \eqref{it:dim6 2-1,2-2} & 2 & 2 \\
\hline
 & \(C_{12} \times C_2^2\) & 2-1,2-2,1,1& \eqref{it:dim6 2-1,2-2} & 3 & 0 \\
\hline
\hline
64 & \(((C_8 \rtimes C_2) \rtimes C_2) \rtimes C_2\) & 4-1,2-1 & \eqref{it:dim6 2-1,2-1,2-1} & 1 & 0 \\
\hline
\hline
72 & \(C_3 \times S_4\) & 3,2-2,1 & \eqref{it:dim6 2-2,2-2} & 2 & 1 \\
\hline
 & \(C_6 \times A_4\) & 3,2-2,1 & \eqref{it:dim6 2-2,2-2} & 1 & 0 \\
\hline
\hline
80 & \(C_2^4 \rtimes C_5\) & 5-1,1 & \eqref{it:dim6 4-1} & 4 & 4 \\
\hline
\hline
96 & \(C_2 \times C_4 \times A_4\) & 3,2-1,1 & \eqref{it:dim6 2-1,2-1} & 2 & 0 \\
\hline
 & \(C_2^3 \times A_4\) & 3,1,1,1 & \eqref{it:dim6none} & 8 & 0 \\
\hline
\hline
160 & \(C_2 \times (C_2^4 \rtimes C_5)\) & 5-1,1 & \eqref{it:dim6 4-1} & 4 & 0 \\
\hline

\caption{The data for flat 6-manifolds broken down by holonomy group.}
\label{tab:dim6}
\end{longtable}

\end{document}